\journal{Topology and its Applications}
\definecolor{darkpastelgreen}{rgb}{0.01, 0.75, 0.24}
\definecolor{forestgreen}{rgb}{0.13, 0.55, 0.13}
\newcommand\qq[1]{\quad #1 \quad}
\newcommand\ee[1]{\enspace #1 \enspace}
\newcommand\ete[1]{\enspace\text{#1}\enspace}
\newcommand\qtq[1]{\quad\text{#1}\quad}
\newcommand\qqtqq[1]{\quad\quad\text{#1}\quad\quad}
\newcommand\p[1]{\ensuremath{\mathscr{ #1 }}}
\newcommand{\inv}{^{-1}}
\newcommand\op{^\text{op}}
\newcommand\onto{\twoheadrightarrow}
\newcommand\into{\hookrightarrow}
\newcommand{\ttimes}{\mathclose{\times}}
\newcommand\sue{\subseteq}
\newcommand\upset  {\ensuremath{\mathord{\uparrow}\mkern1mu}}
\newcommand\downset{\ensuremath{\mathord{\downarrow}\mkern1mu}}
\newcommand\id{\ensuremath{\mathsf{id}}}
\newcommand\Up{\ensuremath{\mathsf{Up}}}
\newcommand\Idl{\ensuremath{\mathsf{Idl}}}
\newcommand\RIdl{\ensuremath{\mathsf{RIdl}}}
\newcommand\Filt{\ensuremath{\mathsf{Filt}}}
\newcommand{\spec}{\ensuremath{\mathsf{spec}}}
\def\mee{\wedge}
\def\bigmee{\bigwedge}
\DeclareMathOperator*{\dirvee}{\bigvee\dirup}
\DeclareMathOperator*{\dircup}{\bigcup\dirup}
\newcommand\dirup{\!{}^{\upset}}
\newcommand\dirdown{\!{}^{\downset}}
\newcommand\KS{\ensuremath{\mathsf{KS}}}
\newcommand\SO{^{\wedge}}
\newcommand\iSO{^{\dagger}}
\newcommand\can{^\delta}
\newcommand{\two}{\ensuremath{\mathbf{2}}}
\newcommand\Oo{\ensuremath{\mathcal O}}
\newcommand\pt{\ensuremath{\mathsf{pt}}}
\newcommand\ptC{\ensuremath{\mathsf{pt}_{\bigmee}}}
\newcommand\Sl{\ensuremath{\mathcal{S}}}
\newcommand\Sc{\ensuremath{\mathcal{S}_c}}
\newcommand\Ps{\raisebox{.17\baselineskip}{\Large\ensuremath{\wp}}}
\newcommand\Top{\ensuremath{\textbf{Top}}}
\newcommand\Frm{\ensuremath{\textbf{Frm}}}
\newcommand\emb{\ensuremath{e\colon L\to L\can}}
\newcommand\embI{\ensuremath{e\colon L\hookrightarrow L\can}}
\newcommand\ARG{\mkern1mu\cdot\mkern1mu}
\newcommand\wbelow{\ll}
\newcommand\soI{\ensuremath{\kappa}}
\newcommand\soII{\ensuremath{\lambda}}
\newcommand\soIII{\ensuremath{\chi}}
\newenvironment{axioms}
    {\begin{enumerate}[labelindent=4em, leftmargin=*, itemsep=0.8em, topsep=1em]}
    {\end{enumerate}}
\newenvironment{diagram}[1][]{\begin{center}\begin{tikzcd}[#1]}{\end{tikzcd}\end{center}}
\newcommand{\cdmatrix}[2][]{%
    \begin{tikzcd}[cramped, sep=2em, ampersand replacement=\&,#1]%
    #2 %
    \end{tikzcd}%
}
\theoremstyle{plain}
\newtheorem{theorem}{Theorem}[section]
\newtheorem{corollary}[theorem]{Corollary}
\newtheorem{lemma}[theorem]{Lemma}
\newtheorem{proposition}[theorem]{Proposition}
\newtheorem{observation}[theorem]{Observation}
\newtheorem{fact}[theorem]{Fact}
\theoremstyle{definition}
\newtheorem{example}[theorem]{Example}
\newtheorem{definition}[theorem]{Definition}
\newtheorem{remark}[theorem]{Remark}
\begin{document}
\begin{frontmatter}
\title{Canonical extensions of locally compact frames}
\author{Tom\'a\v s Jakl\corref{grant}}
\cortext[grant]{
    The research discussed has received funding from the European
        Research Council (ERC) under the European Union's Horizon 2020
        research and innovation programme (grant agreement No.670624).}
\address{Laboratoire J. A. Dieudonn\'e, CNRS and Universit\'e C\^ote d'Azur,\\
    06108 Nice Cedex 02, France}
\ead{tomas.jakl@unice.fr}
\date{December 1, 2018}

\begin{abstract}
    Canonical extension of finitary ordered structures such as lattices, posets, proximity lattices, etc., is a certain completion which entirely describes the topological dual of the ordered structure and it does so in a purely algebraic and choice-free way. We adapt the general algebraic technique that constructs them to the theory of frames.

    As a result, we show that every locally compact frame embeds into a completely distributive lattice by a construction which generalises, among others, the canonical extensions for distributive lattices and proximity lattices. This construction also provides a new description of a construction by Marcel Ern\'e. Moreover, canonical extensions of frames enable us to frame-theoretically represent monotone maps with respect to the specialisation order.
\end{abstract}

\begin{keyword}
canonical extensions \sep frames \sep pointfree topology \sep duality theory

% 06D22 (Frames, locales)
% 06B35 (Continuous lattices and posets, applications)
% 06B23 (Complete lattices, completions)
\MSC[2010] 06D22 \sep 06B35 \sep 06B23
\end{keyword}

\end{frontmatter}

\section{Introduction}

Initially canonical extensions were defined by J\'onsson and Tarski~\cite{jonssontarski1951boolean, jonssontarski1952boolean} in order to understand topological duals of Boolean algebras with additional operations. The theory of canonical extensions was later extended to distributive lattices, posets, proximity lattices, coherent categories and more~\cite{gehrkejonsson2004bounded,gehrkejonsson1994bounded,gehrkejansanapalmigiano2013delta,gehrkeharding2001bounded,gehrkepriestley2008canonical,gool2012duality,coumans2012generalising}.

Intuitively, canonical extension of a distributive lattice algebraically represents the lattice embedding of the compact open subsets of its Stone dual into the complete lattice of saturated sets, that is, upsets with respect to the specialisation order. Because the construction is purely algebraic, canonical extension of a distributive lattice can be thought of as a point-free and choice-free description of the frame of all saturated subsets of its dual space.

There is nothing specific about distributive lattices and their Stone duals. We can ask whether there is an analogous construction for frames and topological spaces. In particular, the frame of open sets of a topological space always embeds into the frame of saturated subsets. The problem therefore is to give a choice-free and purely frame-theoretic or algebraic construction of the frame of saturated subsets.

We solve this question for the class of locally compact frames. This is a wide class of frames which includes all compact regular frames as well as coherent frames. In fact our technique works for an even more general class of frames, that is, for frames with enough compact fitted sublocales.
 We show that every locally compact frame canonically embeds into a complete lattice, which it meet-generates, and which is isomorphic to the frame of saturated subsets of the dual space of the frame we started with. From this it follows that the canonical extension of the frame is a completely distributive lattice, although this fact is not choice-free. Moreover, we also show how to lift perfect homomorphisms between locally compact frames to complete lattice homomorphisms between their canonical extensions.

Our construction generalises the distributive case of Sam van Gool~\cite{gool2012duality} which solves this question for proximity lattices and, therefore, also indirectly for stably compact frames. Moreover, it turns out that our construction is exactly the same as the one considered by Marcel Ern\'e in~\cite{erne2007choiceless} in which he constructs a certain frame extension for every preframe. We also show how one can use canonical extensions to represent monotone maps with respect to specialisation order, in a point-free way.

\section{Preliminaries}

For a map $f\colon X\to Y$ between sets $X$ and $Y$, it will be important throughout the text to notationally distinguish its application to one element, from its forward image map $f[\ARG]\colon \Ps(X) \to \Ps(Y)$, and the preimage map $f\inv[\ARG]\colon \Ps(Y) \to \Ps(X)$, that is, from the maps
\[ f[M] = \{ f(x) ~|~ x\in M\}, \ete{and} f\inv[N] = \{ x \in X ~|~ f(x) \in N\}. \]

In the following we summarise the main facts on order theory and point-free topology which we will rely on.
Proofs of most of the results summarised in this section can be found in the standard reference book~\cite{picadopultr2012book}.

\subsection{Topological spaces}

% A topological space is pair $(X, \tau)$ where $X$ is a set of points and $\tau$ is a set open sets such that $\tau$ is a set of subsets of $X$ (i.e.\ $\tau \sue \Ps(X)$) which is closed under all unions and finite intersections. This means that $\emptyset$ and $X$ are always in $\tau$ as those are the empty union and empty intersection, respectively.

Every topological space $(X, \tau)$ carries the so-called \emph{specialisation order} on the set of its points:
\[ \forall x,y\in X.\quad x\leq_\tau y \qtq{iff} \forall U \in \tau.\ x\in U \implies y\in U. \]
\emph{Saturated} subsets of a space are those subsets which are upward closed with respect to the specialisation order. We denote the set of all saturated subsets/upsets of a space $(X, \tau)$ by $\Up(X, \leq_\tau)$, or sometimes just $\Up(X)$.  Note that a subset of a space $M\sue X$ is saturated if and only if $M = \bigcap \{ U\in \tau ~|~ M \sue U \}$ or equivalently if there is a collection $\p U\sue \tau$ such that $M = \bigcap \p U$ (Proposition 4.2.9 in~\cite{goubault2013nonhausdorf}). Note also that all open subsets of a space are saturated and so $\tau \sue \Up(X)$.

% A function between carriers of two topological spaces $f\colon (X,\tau) \to (Y, \rho)$ is continuous if, for every $V\in \rho$, the preimage $f\inv[V] = \{ x\in X ~|~ f(x) \in V\}$ is open in $X$.

% A subsets $K$ of $X$ is called \emph{compact} if any its covering by open sets has a finite subcovering. Note that the forward image $f[K] = \{ f(k) ~|~ k\in K\}$ of a compact set $K$ is always compact in $Y$.

\subsection{Galois adjunctions}
Order preserving function $f\colon A \to B$ between complete lattices preserves arbitrary joins if and only if there is a monotone function $g\colon B\to A$ (in fact, meet  preserving) such that
\[ f(a) \leq b \qtq{iff} a \leq g(b), \]
or equivalently
\[ f(g(b)) \leq b \qtq{iff} a \leq g(f(a)). \]
We call $f$ the \emph{left (Galois) adjoint} and $g$ the \emph{right (Galois) adjoint}. It follows that $f = f g f$ and $g = g f g$ and so by setting $c = g f$ we obtain a \emph{closure operator} $c\colon A\to A$ on $A$ which satisfies the following conditions, for all $a, a'\in A$:
\[ a \leq a' \implies c(a) \leq c(a'),\quad a \leq c(a),\qtq{and} c(c(a)) \leq c(a). \]
The fixpoints of $c\colon A\to A$ form a complete lattice, in the order inherited from $A$.

Note that if we start from two \emph{antitone} maps $f\colon A\rightleftarrows B \colon g$ such that $a \leq g(b)$ iff $b\leq f(a)$ then we speak of an \emph{antitone Galois adjunction}. In this case both maps $f$ and $g$ transform arbitrary joins into meets. In fact, antitone Galois adjunctions are historically the more standard setting.

\subsection{Frames}

A complete lattice $L$ is a \emph{frame} if, for every $A\sue L$ and $b\in L$,
\[ (\bigvee A)\mee b = \bigvee_{a\in A} (a\mee b). \]
Similarly, a \emph{preframe} $P$ is a poset which has finite meets, directed joins and, for every \emph{directed} $D\sue P$ and $b\in P$,
\[ (\dirvee D)\mee b = \dirvee_{d\in D} (d\mee b), \]
where $\dirvee D$ denotes the supremum of the directed set $D$. Homomorphisms of frames (resp.\ preframes) are the maps which preserve arbitrary joins  (resp.\ directed joins) and finite meets. We say that a poset is a \emph{coframe} if it is a frame in the opposite order and, similarly, it is a \emph{co-preframe} if it is a preframe in the opposite order.

Every topological space $(X, \tau)$ gives rise to a frame $\Oo(X, \tau) = \tau$ and, vice versa, every frame $L$ gives rise to a space $\pt(L)$ whose points are the frame homomorphisms $p\colon L\to \two$ into the two-element frame \two. The topology of $\pt(L)$ consists of the sets of the form $\{ p\in \pt(L) ~|~ p(a) = 1\}$, for some $a\in L$. Moreover, the mappings $X\mapsto \Oo(X)$ and $L\mapsto \pt(L)$ extend to a pair of dually adjoint functors, with the actions on morphisms defined as
\begin{align*}
    f\colon X\to Y &\ee\longmapsto \Oo(f)\colon \Oo(Y) \to \Oo(X),\ V\mapsto f\inv[V],\ete{and}\\
    h\colon L\to M &\ee\longmapsto \pt(h)\colon \pt(M) \to \pt(L),\ p\mapsto p\circ h.
\end{align*}

In case when a frame is isomorphic to $\Oo(X)$, for some space $X$, we say that the frame is \emph{spatial} or that it has \emph{enough points}. Conversely, spaces which are homeomorphic to $\pt(L)$, for some frame $L$, are called \emph{sober}.

\subsection{Scott-open filters and locally compact frames}
A \emph{filter} $F\sue A$ of a frame or distributive lattice\footnote{We assume that all lattices are bounded.} $A$ is an upset closed under finite meets. In particular, $F$ is non-empty as it contains the empty meet, i.e.\ the top element. Similarly, a subset $I\sue A$ is an \emph{ideal} if it is a filter in $A\op$, that is, in $A$ ordered by the opposite order. The frame of ideals ordered by set inclusion is denoted by $\Idl(A)$. Similarly, the \emph{co}frame of filters ordered by the reverse inclusion is denoted by $\Filt(A)$. Note that filtered meets, binary joins and binary meets of filters are computed as follows:
\[ \bigmee_i\dirdown\ F_i = \dircup F_i,\quad F \vee G = F\cap G,\quad F\mee G = \{ f\mee g ~|~ f\in F,\ g\in G\}. \]

A filter $F$ of a frame $L$ is called \emph{Scott-open} if, whenever $\dirvee D\in F$ for some directed $D\sue L$, then already $d\in F$ for some $d\in D$. Note that the poset of Scott-open filters on a frame $L$ is a sub-co-preframe of $\Filt(L)$, which we will denote by $L\SO$. (In our notation, we will always denote Scott-open filters by $\soI$, $\soII$, or $\soIII$.) Moreover, Scott-open filters are in a bijective correspondence with \emph{pre}frame homomorphisms $L\to \two$. Consequently, a preimage of a Scott-open filter by a preframe homomorphism is again a Scott-open filter.

We prefer to order filters by reverse inclusion because then we can interpret filters as a point-free representation of saturated subsets. This is justified because such order makes the following two maps monotone (for a sober space $X$)
\[ M \in \Up(X) \mapsto \{ U\in \Oo(X) ~|~ M \sue U\} \qtq{and} F\in \Filt(\Oo(X)) \mapsto \bigcap F.\]
This representation of saturated subsets as filters is not unique in general. However, if we restrict to Scott-open filters then we obtain a unique representation of compact saturated subsets. This is the celebrated \emph{Hofmann--Mislove Theorem}~\cite{hofmannmislove1981local, keimelpaseka1994direct}.

A frame $L$ is called \emph{locally compact} (or \emph{continuous}), if $a = \dirvee \{ c ~|~ c\wbelow a\}$ for every $a\in L$, where
\[ c\wbelow a \qtq{iff} a \leq \dirvee D \implies c\leq d \text{ for some } d\in D.\]
For locally compact frames this relation has also another characterisation (e.g.\ see Lemma~VII.6.3.2 in~\cite{picadopultr2012book} or Proposition I-3.3 in~\cite{gierzetal2003continuous}):
\begin{align*}
    c\wbelow a \qtq{iff} \exists \soI\in L\SO \text{ s.t.\ } a\in \soI \sue \upset c
\end{align*}
The dual adjunction between the functors $\Oo$ and $\pt$ restricts to a duality of categories, with locally compact frames on one side and \emph{locally compact sober} spaces on the other side. This duality is known as the Hofmann--Lawson duality.

\subsection{Sublocales}
Next we introduce the frame-theoretic analogue of subspaces. A subset $S$ of a frame $L$ is a \emph{sublocale} if $S$ is the set of fixpoints of some \emph{nucleus} $\nu\colon L\to L$, where nucleus is a closure operator which preserves binary meets.

Every sublocale $S$, in the order derived from $L$, is a frame and the nucleus $\nu$ restricts to an \emph{onto} frame homomorphism $L\onto S$. In fact, every onto frame homomorphism gives rise to a nucleus, and vice versa.

Finally, the lattice of sublocales $\Sl(L)$, ordered by set inclusion, forms a coframe with meets computed as intersections, and embeds the original frame $L$ via the map
\[ a\in L \ee\mapsto U_a = \{ x ~|~ x = x \to a \}.\footnotemark \]
\footnotetext{Here $\to$ is the Heyting implication of the frame.}
Sublocales of the form $U_a$ are called \emph{open sublocales} and their complements in $\Sl(L)$, which always exist, are called \emph{closed sublocales}. For a more in-depth discussion about sublocales, nuclei and frame quotients we refer the reader to \cite{picadopultr2012book}.

\section{Abstract description}

\subsection{Canonical extensions for distributive lattices}\label{s:abstract-dlat}
To motivate our definition of canonical extension for frames we first recall the construction for distributive lattices (for details see e.g.~\cite{dunngehrkealessandra2005canonical,gehrke2014canonical,gehrkeharding2001bounded,gehrkejonsson1994bounded}). Given a distributive lattice $A$, its canonical extension $e\colon A\into A\can$ is a lattice embedding of $A$ into a complete lattice $A\can$ uniquely described by the following two conditions

\begin{axioms}
    \item[(Dense)] For all $u,v \in A\can$, $u\leq v$ if, for every filter $F\sue A$ and ideal $I\sue A$, $\bigmee e[F] \leq u$ and $v\leq \bigvee e[I]$ implies $\bigmee e[F] \leq \bigvee e[I]$.
    \item[(Compact)] If $\bigmee e[F] \leq \bigvee e[I]$ for some filter $F\sue A$ and ideal $I\sue A$, then $F \cap I \not= \emptyset$.
\end{axioms}

Density simply means that every element of the canonical extension $A\can$ is both a join of meets and a meet of joins of elements of $A$.

Note that up until recently it was preferred to write $A^\sigma$ to denote the canonical extension of $A$. However, this notation conflicts the notation for the two different extensions, $\sigma$-- and $\pi$-extension, of monotone maps -- the first defined as a join of meets and the other as a meet of joins (and there is no reason why one should have any preference over the other).

\begin{remark}\label{r:dlat-expl}
    The intuition behind the two axioms is that $e\colon A\into A\can$ represents the embedding of compact opens of $X$ into the complete lattice of saturated subsets of $X$, where $X$ is the Stone dual (spectral) space of $A$. Under this interpretation, ideals of $A$ correspond to the open sets via the mapping $I \mapsto \bigvee e[I]$ and, similarly, filters correspond to the compact saturated subsets via $F \mapsto \bigmee e[F]$.

    Density then simply means that every saturated subset $M$ of $X$ is uniquely determined by compact saturated sets $K$ such that $K \sue M$ and also by open sets $U$ such that $M \sue U$. Further, compactness means that $K \sue U$ implies that there is a compact open $O$ such that $K\sue O \sue U$.

    Note that, by assuming the axiom of choice, we can identify the elements of $A\can$ that correspond to the principal upsets $\upset x$ in the specialisation order of $X$ as those elements which are completely join prime. We see that $A\can$ represents the points of $X$ as well as its upsets in one structure and so it effectively contains all information about the space $X$.~\cite{gehrke2014canonical}
\end{remark}

Existence and uniqueness of the canonical extension $A\into A\can$ follow from a general theory of polarities as described, for example, in~\cite{gehrke2006generalized}. Let us recall some of the basics of the theory that goes back to Birkhoff~\cite{birkhoff1979lattice}. A \emph{polarity} is a triple $(X, Y, Z)$ where $X$ and $Y$ are sets and $Z \sue X\ttimes Y$ is a relation. This data induces the following pair of antitone maps between $\Ps(X)$ and $\Ps(Y)$, in the subset order:
\begin{align*}
    p\colon \Ps(X) \to \Ps(Y), \quad M \mapsto \{ y\in Y ~|~ \forall x\in M.\ x Z y\} \\
    q\colon \Ps(Y) \to \Ps(X), \quad N \mapsto \{ x\in X ~|~ \forall y\in N.\ x Z y\}
\end{align*}
Since $N \sue p(M)$ iff $M \sue q(N)$, the maps $p$ and $q$ constitute an antitone Galois adjunction and $\phi = q\circ p$ is a closure operator on $\Ps(X)$. Set $\p G(X,Y,Z)\sue \Ps(X)$ to be the set of Galois closed sets, i.e.\ sets $\{ M\sue X ~|~ \phi(M) = M \}$. There are also two maps
\[ f\colon X\to \p G(X,Y,Z),\ x\mapsto \phi(\{x\}),
   \ete{and}
   g\colon Y\to \p G(X,Y,Z),\ y\mapsto q(\{y\}). \]

\begin{fact}[{e.g., Section 2 in \cite{gehrke2006generalized}}]\label{f:polarities}
    Let $(X, Y, Z)$ be a polarity. Then the complete lattice $C = \p G(X, Y, Z)$ has the following properties.
    \begin{enumerate}
        \item For any $u \in C$,
        \[
            u = \bigvee \{ f(x) ~|~ x\in X,\ f(x)\leq u\}
            \ete{and}
            u = \bigmee \{ g(y) ~|~ y\in Y,\ u \leq g(y)\}.
        \]
        \item For any $x\in X$ and $y\in Y$, $f(x) \leq g(y)$ iff $x Z y$.
        \item For any $f'\colon X \to C'$ and $g'\colon Y \to C'$ also satisfying the conditions (1) and (2) there is a unique complete lattice isomorphism $\iota\colon C'\to C$ such that $\iota \circ f' = f$ and $\iota \circ g' = g$.
    \end{enumerate}
\end{fact}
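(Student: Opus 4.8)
The plan is to realise $C$ as the lattice of fixpoints of the closure operator $\phi = q\circ p$ on $\Ps(X)$, so that meets in $C$ are intersections and the join of a family of closed sets is $\phi$ applied to its union. First I would record the facts about antitone Galois adjunctions used repeatedly below: $q$ sends unions to intersections; every set in the image of $q$ is $\phi$-closed, so in particular $g(y) = q(\{y\})\in C$; and $y\in p(M)$ iff $M\sue q(\{y\})$.

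For part~(1), the meet formula comes out quickly. Since $u\sue q(\{y\})$ is equivalent to $y\in p(u)$,
\[ \bigmee\{ g(y) ~|~ u\leq g(y)\} = \bigcap\nolimits_{y\in p(u)} q(\{y\}) = q(p(u)) = \phi(u) = u, \]
using that $q$ turns the union $p(u) = \bigcup_{y\in p(u)}\{y\}$ into an intersection and that $u$ is closed. For the join formula, $f(x) = \phi(\{x\})\sue u$ for every $x\in u$ while $x\in f(x)$, so $\bigcup_{x\in u} f(x) = u$ and hence $\bigvee_C\{f(x)~|~x\in u\} = \phi(u) = u$; this forces $\bigvee_C\{f(x)~|~f(x)\leq u\} = u$ as well, as the latter join lies between the former one and $u$. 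For part~(2): if $f(x)\leq g(y)$ then $\{x\}\sue\phi(\{x\})\sue q(\{y\})$, i.e.\ $xZy$; conversely $xZy$ means $\{x\}\sue q(\{y\})$, and applying $\phi$ (with $q(\{y\})$ closed) gives $f(x)\leq g(y)$.

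The substance is part~(3). Given $f'\colon X\to C'$ and $g'\colon Y\to C'$ satisfying (1) and (2), I would introduce the monotone maps
\[ \iota\colon C'\to C,\quad u'\mapsto \bigvee\nolimits_C\{ f(x) ~|~ f'(x)\leq u'\}, \]
\[ \rho\colon C\to C',\quad v\mapsto \bigmee\nolimits_{C'}\{ g'(y) ~|~ v\leq g(y)\}, \]
and show they form a Galois adjunction $\iota\dashv\rho$. The crux is that, expanding $u' = \bigvee\{f'(x)~|~f'(x)\leq u'\}$ and $v = \bigmee\{g(y)~|~v\leq g(y)\}$ by density and then applying (2) on both the primed and the unprimed side, the two inequalities $\iota(u')\leq v$ and $u'\leq\rho(v)$ each unwind to the single relational condition: $xZy$ for all $x,y$ with $f'(x)\leq u'$ and $v\leq g(y)$. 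Next I would check $\iota\circ f' = f$ and $\rho\circ g = g'$: by (1) and (2), $f'(x)\leq f'(x_0)$ is equivalent to ``$xZy$ whenever $x_0Zy$'', hence to $f(x)\leq f(x_0)$, so that $\iota(f'(x_0)) = \bigvee_C\{f(x)~|~f(x)\leq f(x_0)\} = f(x_0)$ by density, and dually for $g$. Finally $\rho\iota(u') = \bigmee_{C'}\{g'(y)~|~\iota(u')\leq g(y)\} = \bigmee_{C'}\{g'(y)~|~u'\leq\rho(g(y)) = g'(y)\} = u'$ by density, and symmetrically $\iota\rho(v) = v$; thus $\iota$ is a complete lattice isomorphism with $\iota f' = f$ and $\iota g' = \iota\rho g = g$. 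Uniqueness is then automatic: any complete lattice isomorphism $\iota'$ with $\iota' f' = f$ must send $u' = \bigvee\{f'(x)~|~f'(x)\leq u'\}$ to $\bigvee_C\{f(x)~|~f'(x)\leq u'\} = \iota(u')$.

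The main obstacle is the adjunction $\iota\dashv\rho$ itself: proving it means juggling four density expansions at once — $u'$ and $v$ each rewritten via (1) — and invoking property~(2) simultaneously on the $C$ side and the $C'$ side in order to collapse everything to a statement about the relation $Z$ alone. Once that symmetric reformulation is isolated, the remaining identities ($\iota f' = f$, $\rho g = g'$, $\rho\iota = \id$, $\iota\rho = \id$, and uniqueness) are routine manipulations with the Galois adjunction and density.
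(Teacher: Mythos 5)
Your proof is correct. The paper states this result as a Fact and cites Section~2 of Gehrke's \emph{Generalized Kripke Frames} rather than proving it, so there is no in-paper argument to compare against; your concrete verification of (1) and (2) on the lattice of Galois-closed sets, and your proof of (3) via the adjoint pair $\iota\dashv\rho$ whose two defining inequalities both collapse to the condition ``$xZy$ whenever $f'(x)\leq u'$ and $v\leq g(y)$,'' is essentially the standard argument from that literature (cf.\ Gehrke--Harding, \emph{Bounded lattice expansions}, Section~2).
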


Consequently, for a distributive lattice $A$, setting $X = \Filt(A)$, $Y = \Idl(A)$, and $Z = \{ (F, I) ~|~ F \cap I \not= \emptyset\}$ gives us that, for the complete lattice $C = \p G(X, Y, Z)$, the composite
\[ e\colon A\xrightarrow{~a~\mapsto~\downset a~} \Idl(A) \xrightarrow{\qq g} C \]
is the canonical extension of $A$, e.g.~\cite{gehrkeharding2001bounded}.% dunngehrkealessandra2005canonical

\subsection{The same construction for spaces}
\label{s:construction-spaces}
The construction mentioned above can be rewritten purely in terms of the Stone dual $(X, \tau)$ of the distributive lattice $A$. As we mentioned in Remark~\ref{r:dlat-expl}, the frame of ideals is isomorphic to $\tau$ and the coframe of filters is isomorphic to the poset of compact saturated subsets $\KS(X)$ of $X$, ordered by set inclusion. Moreover, the relation $F Z I$ expresses the fact that $K_F \sue U_I$ where $K_F$ is the compact saturated subset corresponding to $F$ and $U_I$ is the open set corresponding to $I$.

This intuition allows us to obtain the frame ${\Up(X, \leq_\tau)}$ entirely from this data. Furthermore, this construction works for all topological spaces, not just those arising as spectra of distributive lattices.

\begin{proposition}\label{p:space-case}
    Let $(X, \tau)$ be a topological space. Then
    $\p G(\KS(X), \tau, Z)$ is isomorphic to ${\Up(X, \leq_\tau)}$
    where $K Z U$ iff $K \sue U$.
\end{proposition}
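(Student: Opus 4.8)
The plan is to invoke the uniqueness clause of Fact~\ref{f:polarities}, namely part~(3): to identify $\p G(\KS(X), \tau, Z)$ it suffices to produce \emph{some} complete lattice together with maps out of $\KS(X)$ and $\tau$ satisfying conditions~(1) and~(2), and then that lattice must be isomorphic to $\p G(\KS(X), \tau, Z)$. I would take the lattice to be $\Up(X, \leq_\tau)$ and the two maps to be the inclusions $f'\colon\KS(X)\to\Up(X)$, $K\mapsto K$, and $g'\colon\tau\to\Up(X)$, $U\mapsto U$; these make sense because compact saturated sets and open sets are, in particular, saturated.

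Condition~(2) is then immediate, since for $K\in\KS(X)$ and $U\in\tau$ the inequality $f'(K)\leq g'(U)$ in $\Up(X)$ is literally the inclusion $K\sue U$, which is the defining condition of the relation $K\,Z\,U$. For condition~(1), fix $M\in\Up(X)$; I must verify
\[ M = \bigcap \{ U\in\tau ~|~ M\sue U\} \qtq{and} M = \bigcup \{ K\in\KS(X) ~|~ K\sue M\}. \]
The first equality is exactly the characterisation of saturated subsets recalled in the preliminaries (Proposition~4.2.9 in~\cite{goubault2013nonhausdorf}), so nothing new is needed there. The second equality carries the content, and its key ingredient is that for every point $x$ the upset $\upset x$ (the saturation of $\{x\}$) is compact: any open cover of $\upset x$ contains a member $U$ with $x\in U$, and since $U$, being open, is upward closed, $x\in U$ forces $\upset x\sue U$, so one set of the cover already suffices. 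Hence $\upset x\in\KS(X)$, and since $M$ is an upset we have $\upset x\sue M$ whenever $x\in M$, so
\[ M = \bigcup_{x\in M}\upset x \ \sue\ \bigcup \{ K\in\KS(X) ~|~ K\sue M\} \ \sue\ M. \]
With both representations established, Fact~\ref{f:polarities}(3) yields a complete lattice isomorphism $\Up(X, \leq_\tau)\cong\p G(\KS(X), \tau, Z)$.

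I expect the only genuine obstacle to be the compactness of $\upset x$; everything else is unwinding definitions together with one quoted fact. As an alternative not going through Fact~\ref{f:polarities}(3), one could compute the closure operator $\phi = q\circ p$ on $\Ps(\KS(X))$ directly and check that $\phi(\p M) = \{ K\in\KS(X) ~|~ K\sue S\}$ where $S$ is the saturation of $\bigcup\p M$; the Galois-closed subsets of $\KS(X)$ would then be exactly the sets $\{ K\in\KS(X) ~|~ K\sue S\}$ for $S\in\Up(X)$, and the assignments $\p M\mapsto\bigcup\p M$ and $S\mapsto\{ K\in\KS(X) ~|~ K\sue S\}$ would be mutually inverse order isomorphisms onto $\Up(X,\leq_\tau)$ --- once more, the compactness of $\upset x$ being what makes $\bigcup\{ K\in\KS(X) ~|~ K\sue S\}$ recover $S$.
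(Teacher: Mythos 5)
Your proposal is correct and follows essentially the same route as the paper: both invoke the uniqueness clause of Fact~\ref{f:polarities} for the lattice $\Up(X,\leq_\tau)$ with the two inclusion maps, with condition (2) being the definition of $Z$ and condition (1) reducing to $M=\bigcap\{U\in\tau\mid M\sue U\}$ and $M=\bigcup\{\upset x\mid x\in M\}$. You merely spell out in more detail the compactness of $\upset x$, which the paper states as the key underlying fact without proof.
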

\begin{proof}
    By Fact~\ref{f:polarities} it is enough to check that the embeddings $f\colon \KS(X) \into \Up(X, \leq_\tau)$ and $g\colon \tau \into \Up(X, \leq_\tau)$ satisfy the conditions (1) and (2).

    The first condition is immediate as, for a saturated subset $M$ of $X$, $M = \bigcup \{ \upset x ~|~ x\in M\}$ and $M = \bigcap \{ U\in \tau ~|~ M \sue U \}$. The second condition follows immediately from the definition of $Z$.
\end{proof}

We see that the proof of Proposition~\ref{p:space-case} relies on the fact that the principal upsets in the specialisation order $\upset x$ are compact and saturated. In the following we will try to mimic this construction in a purely frame-theoretic setting.

\subsection{Canonical extension for frames}\label{s:abstract-frames}
By the Hofmann--Mislove Theorem, compact saturated subsets of a sober space are in a bijective correspondence with Scott-open filters of its lattice of opens. Therefore, we expect that the best candidate for the canonical extension of a frame $L$, by example of Proposition~\ref{p:space-case}, is the lattice $\p G(L\SO, L, Z)$ where, for $\soI\in L\SO$ and $a\in L$, $\soI Z a$ iff $a \in \soI$. The expression for $Z$ represents the statement that the compact saturated set corresponding to \soI{} is a subset of the open set corresponding to $a$.

Before we understand what is the structure of the lattice we obtained this way, let us first describe the canonical extension $L\can$ of $L$ based on the expected mutual relationship between Scott-open filters $L\SO$ and (abstract) open sets $L$ when embedded into $L\can$. Since it is expected that the compact saturated subset corresponding to a Scott-open filter is obtained as the ``intersection'' of all the opens it contains, it is natural to consider the following definition.

\begin{definition}\label{d:can-ext}
    Let $L$ be a frame. A monotone mapping \emb{} into a complete lattice $L\can$ is a \emph{canonical extension} of $L$ if
\begin{axioms}
    \item[(Dense)] For all $u,v \in L\can$, $u\leq v$ if, for all $\soI\in L\SO$ and $a\in L$, satisfying $\bigmee e[\soI] \leq u$ and $v\leq e(a)$ implies $\bigmee e[\soI] \leq e(a)$.

    \item[(Compact)] If $\bigmee e[\soI] \leq e(a)$ for some $\soI\in L\SO$ and $a\in L$, then $a\in \soI$.
\end{axioms}
\end{definition}
Note that $L\can$ is not required to be distributive, nor even a frame.

\begin{example}\label{e:can-ext-space}
    Let us check that the definition holds for our motivating example. We check that, for a sober space $(X, \tau)$, the embedding $\tau \into \Up(X, \leq_\tau)$ is a canonical extension of $\tau$.

    Compactness follows immediately from the Hofmann--Mislove Theorem because, for a Scott-open filter $\soI\sue \tau$ and an open $U\in \tau$,
    $\bigcap \soI \sue U$ iff $U\in \soI.$
    For density, let $M \not\sue N$ for some upsets $M$ and $N$. Since $N$ is equal to $\bigcap \{ U\in \tau ~|~ N \sue U\}$, there must be a $U\in \tau$ such that $M\not\sue U$ and $N\sue U$. Pick an $x\in M\setminus U$. Then $\upset x \not\sue U$ but $\upset x \sue M$. For the Scott-open filter $\soI =  \{ V\in \tau ~|~ \upset x \sue V\}$ corresponding to $\upset x$ we see that $\bigcap \soI \not\sue U$.
\end{example}

We postpone the proof of existence and uniqueness of canonical extensions for the moment and focus on the properties that follow immediately from the definition.

Since the expression $\bigmee e[\soI]$ appears repeatedly in our theory, we define a shorthand. For a map \emb, set
\[ e\SO\colon L\SO \to L\can,\quad \soI \mapsto \bigmee e[\soI] = \bigmee \{ e(a) ~|~ a\in \soI \}. \]

\begin{proposition}\label{p:basic-properties}
    Let $L$ be a frame and \emb{} be its canonical extension. Then
    \begin{enumerate}
        \item \emb{} is a preframe homomorphism preserving 0.
        \item $e\SO\colon L\SO\to L\can$ is an injective co-preframe homomorphism \\
              (recall that $L\SO$ is taken to be ordered by reverse inclusion).
        \item For every $u\in L\can$,
           \begin{align*}
               u &= \bigmee \{ e(a) ~|~ a\in L,\ u \leq e(a) \}
               \ete{and}\\
               u &= \bigvee \{ e\SO(\soI) ~|~ \soI\in L\SO,\ e\SO(\soI) \leq u \}.
           \end{align*}
        \item If \emb{} is injective, then it is a frame homomorphism.
        \item If $L$ is locally compact, then $e$ is an injective frame homomorphism.
    \end{enumerate}
\end{proposition}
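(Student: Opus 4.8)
The plan is to derive every item from the two defining conditions (Dense) and (Compact) by means of one recurring manoeuvre. To prove an inequality $u\le v$ in $L\can$ via (Dense), I take an arbitrary $\soI\in L\SO$ with $e\SO(\soI)\le u$ and an arbitrary $a\in L$ with $v\le e(a)$, and I must produce $e\SO(\soI)\le e(a)$. Whenever along the way I can bound $e\SO(\soI)\le e(b)$ for some $b\in L$, (Compact) gives $b\in\soI$; combining finitely many such memberships with a closure property of the filter $\soI$ --- it is an up-set, it is closed under finite meets, and it is Scott-open --- I deduce that $a$ itself lies in $\soI$, whence $e\SO(\soI)=\bigmee e[\soI]\le e(a)$ is immediate. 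All five items of Proposition~\ref{p:basic-properties} are instances of this, and (Compact) is needed for all of them except (3).

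I would start with (3), the purest use of (Dense). For the first formula put $w=\bigmee\{e(a)\mid u\le e(a)\}$; then $u\le w$ trivially, and (Dense) gives $w\le u$, since if $e\SO(\soI)\le w$ and $u\le e(a)$ then $e(a)$ is one of the terms of $w$, so $e\SO(\soI)\le w\le e(a)$. For the second formula put $w=\bigvee\{e\SO(\soI)\mid e\SO(\soI)\le u\}$; then $w\le u$, and (Dense) with $v=w$ gives $u\le w$, since if $e\SO(\soI)\le u$ and $w\le e(a)$ then $e\SO(\soI)$ is a term of $w$, so $e\SO(\soI)\le w\le e(a)$.

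Next (1) and (2). For (1): $e(1)=1$ because $1$ belongs to every filter, so $e\SO(\soI)\le e(1)$ for all $\soI$ and (Dense) puts the top of $L\can$ below $e(1)$; dually $e(0)=0$ because $e\SO(\soI)\le e(0)$ forces $0\in\soI$ via (Compact), hence $\soI=L$, hence $e\SO(\soI)\le e(a)$ for every $a$. Preservation of binary meets: from $e\SO(\soI)\le e(a)$ and $e\SO(\soI)\le e(b)$ one gets $a,b\in\soI$, hence $a\mee b\in\soI$, hence $e\SO(\soI)\le e(a\mee b)$, and (Dense) yields $e(a)\mee e(b)\le e(a\mee b)$. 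Preservation of directed joins of $D$: from $e\SO(\soI)\le e(\dirvee D)$ one gets $\dirvee D\in\soI$, hence by Scott-openness $d\in\soI$ for some $d\in D$, hence $e\SO(\soI)\le e(d)\le\dirvee_{d\in D} e(d)$, and (Dense) yields $e(\dirvee D)\le\dirvee_{d\in D} e(d)$. For (2): the same manoeuvre shows $e\SO$ is an order-embedding --- if $e\SO(\soI)\le e\SO(\soII)$ then every $a\in\soII$ satisfies $e\SO(\soI)\le e\SO(\soII)\le e(a)$, so $a\in\soI$, i.e.\ $\soII\sue\soI$ --- hence injective; and $e\SO$ carries the co-preframe operations of $L\SO$ (binary join $\soI\vee\soII=\soI\cap\soII$, filtered meet $\dircup\soI_i$, bottom $L$) to the corresponding operations of $L\can$: for the binary join, if $e\SO(\soI)\le e(a)$ and $e\SO(\soII)\le e(a)$ then $a\in\soI\cap\soII$, so (Dense) gives $e\SO(\soI\cap\soII)\le e\SO(\soI)\vee e\SO(\soII)$; the filtered-meet case instead uses that $a\in\dircup\soI_i$ forces $a\in\soI_{i_0}$ for some $i_0$; and $e\SO(L)\le e(0)=0$.

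Finally (4) and (5). By (1), only preservation of arbitrary joins is still missing, and this is where injectivity enters: injectivity together with preservation of binary meets makes $e$ an order-embedding, so $e(s)\le e(a)$ forces $s\mee a=s$, i.e.\ $s\le a$. Hence if $e\SO(\soI)\le e(\bigvee S)$ and $\bigvee_{s\in S} e(s)\le e(a)$, then $s\le a$ for all $s\in S$, so $\bigvee S\le a$; since $\bigvee S\in\soI$ by (Compact) and $\soI$ is an up-set, $a\in\soI$, and (Dense) gives $e(\bigvee S)\le\bigvee_{s\in S} e(s)$. For (5), by (4) it is enough to prove $e$ injective, i.e.\ that $e(a)\le e(b)$ implies $a\le b$. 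Local compactness lets me write $a=\dirvee\{c\mid c\wbelow a\}$, and for each $c\wbelow a$ the second characterisation of $\wbelow$ recalled in the Preliminaries supplies a Scott-open filter $\soI$ with $a\in\soI\sue\upset c$; then $e\SO(\soI)\le e(a)\le e(b)$, so $b\in\soI\sue\upset c$ by (Compact), i.e.\ $c\le b$, and letting $c$ range gives $a\le b$. This last argument is the only place the hypothesis of (5) is genuinely used, and I expect it --- that is, the injectivity of $e$ for locally compact $L$ --- to be the real point of the proposition; elsewhere the only thing demanding attention is keeping track of which closure property of $\soI$ is invoked at each application of (Dense).
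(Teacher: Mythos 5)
Your proof is correct and follows essentially the same density-plus-compactness manoeuvre as the paper's, item by item. The only (harmless) variation is in (4), where you handle arbitrary joins in a single application of (Dense) rather than reducing to binary joins and appealing to preservation of directed joins from (1).
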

\begin{proof}
    (1) First we check that $e$ preserves 0 and 1. For the former, by density of $e$, it is enough to check that whenever $\bigmee e[\soI] \leq e(0)$ and $0 \leq e(a)$ then also $\bigmee e[\soI] \leq e(a)$. From compactness applied to the first inequality we get that $0 \in \soI$ and, from monotonicity, $e(0) \leq e(a)$. Hence, $\bigmee e[\soI] \leq e(0) \leq e(a)$. We check that $1 \leq e(1)$ by a similar argument. Let $\soI\in L\SO$ and $a\in L$ be such that $\bigmee e[\soI] \leq 1$ and $e(1) \leq e(a)$. Since $1 \in \soI$ and $e$ is monotone, $\bigmee e[\soI] \leq e(1) \leq e(a)$.

    To check that $e$ preserves finite meets and directed joins we only need to check one inequality because $e$ is monotone. Let $a, b\in L$. Assume that $\bigmee e[\soI] \leq e(a)\mee e(b)$ and $e(a\mee b) \leq e(c)$ for some $\soI\in L\SO$ and $c\in L$. By compactness $a, b\in \soI$. Because \soI{} is a filter, $a\mee b\in \soI$ and so $\bigmee e[\soI] \leq e(c)$.

    Lastly, to check that $e(\dirvee D) \leq \dirvee e[D]$ for some directed $D\sue L$, assume that $\bigmee e[\soI] \leq e(\dirvee D)$ and that $\dirvee e[D] \leq e(a)$ for some $\soI\in L\SO$ and $a\in L$. Compactness and the fact that $\soI$ is Scott-open give that $d\in \soI$ for some $d\in D$ and, consequently, $\bigmee e[\soI] \leq e(d) \leq \dirvee e[D] \leq e(a)$.

    (2) For injectivity assume that $e\SO(\soI) = e\SO(\soII)$. Then, for an $a\in \soII$, $e\SO(\soI) = e\SO(\soII) \leq e(a)$ and so, by compactness, $a\in \soI$. Hence, $\soII\sue \soI$. The reverse inclusion is analogous.

    $e\SO$ preserves the bottom as $e\SO(\upset 0) \leq e(0) = 0$ by (1). Next, observe that by compactness $e\SO(\soI)\vee e\SO(\soII) \leq e(a)$ iff $a\in \soI\cap \soII$. Consequently, if $e\SO(\soIII) \leq e\SO(\soI\cap \soII)$ and $e\SO(\soI)\vee e\SO(\soII) \leq e(a)$, then $e\SO(\soIII) \leq e\SO(\soI\cap \soII) \leq e(a)$ and so $e\SO$ also preserves binary joins.

    Finally, we show that $e\SO$ preserves filtered meets. Let $\p F \sue L\SO$ be filtered, and assume that $e\SO(\soIII) \leq \bigmee\dirdown\, e\SO[\p F]$ and $e\SO(\bigmee\dirdown\, \p F) = e\SO(\dircup \p F) \leq e(a)$. From the latter inequality we have that $a\in \soI$ for some $\soI \in \p F$. As a consequence $e\SO(\soIII) \leq \bigmee\dirdown\, e\SO[\p F] \leq e\SO(\soI) \leq e(a)$.

    (3) Let $v = \bigmee \{ e(a) ~|~ a\in L,\ u \leq e(a) \}$. We will show that $v\leq u$. Let $\soI \in L\SO$ and $b\in L$ be such that $e\SO(\soI) \leq v$ and $u\leq e(b)$. From the definition of $v$ we immediately see that $e\SO(\soI) \leq v \leq e(b)$.

    The proof of the second part is analogous.
    % Similarly, let $v' = \bigvee \{ e\SO(\soI) ~|~ \soI\in L\SO,\ e\SO(\soI) \leq u \}$. Then, to check $u \leq v'$, let $\bigmee e[\soII] \leq u$ and $v' \leq e(a)$. Since $\bigmee e[\soII] \leq v'$, $a\in \soII$ by compactness.

    (4) What is left to prove is that $e$ preserves binary joins. Let $e\SO(\soI) \leq e(a\vee b)$ and $e(a)\vee e(b) \leq e(c)$. Since $e$ is injective, it follows that $a\leq c$. Indeed, because $e$ preserves finite meets, $e(a\mee c) = e(a) \mee e(c) = e(a)$ and so $a\mee c = a$. Similarly, $b\leq c$. Moreover, we know from $e\SO(\soI) \leq e(a\vee b)$ that $a\vee b \in \soI$. Consequently, $c\in \soI$ and so $e\SO(\soI) \leq e(c)$.

    (5) It is enough to show injectivity of \emb{}. Assume that $e(a) = e(b)$ for some $a, b\in L$. Let $c \wbelow a$. This means that there is a Scott-open filter ${\soI\in L\SO}$ such that $a\in \soI\sue \upset c$. Because $e\SO(\soI) \leq e(a)$, also $b\in \soI$ by compactness and so $c\leq b$. Consequently $a\leq b$ since $a = \bigvee \{ c ~|~ c\wbelow a \}$. The other direction is symmetrical.
    % Proof in case $L$ is stably locally compact:
    % Since $L$ is stable, we have that $\wupset a$ is a Scott-open filter, for every $a\in L$. Consequently, $c \wbelow a$ iff $e\SO(\wupset c) \leq e(a)$ iff $e\SO(\wupset c) \leq e(b)$ iff $c\wbelow b$. Therefore, by local compactness, $a = b$.
\end{proof}

It might seem surprising how much we can prove from density and compactness alone. This is a common feature of canonical extensions. Another common feature is usually that the extension is injective. We showed that this is achieved whenever the frame is locally compact. Moreover, as we discuss in Section~\ref{s:injectivity}, injectivity is in fact equivalent to a certain topological condition.

\subsubsection{A few more observations}

\begin{lemma}
Let \emb{} be a canonical extension of a frame $L$, then
    \[ \bigvee_{i\in I} e\SO(\soI_i) = \bigmee e[\bigcap_{i\in I} \soI_i],\]
    for every collection $\{\soI_i\}_{i\in I} \sue L\SO$.
\end{lemma}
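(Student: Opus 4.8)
The plan is to prove the two inequalities separately; both will follow at once from compactness together with clause~(3) of Proposition~\ref{p:basic-properties}, so density is not needed in its full strength. Throughout, write $u = \bigvee_{i\in I} e\SO(\soI_i)$ and note that the expression $\bigmee e[\bigcap_{i\in I}\soI_i]$ is meaningful even when $\bigcap_{i\in I}\soI_i$ fails to be Scott-open (which can happen for infinite $I$), since only the image of this set under $e$ is used.

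For the inequality $u \leq \bigmee e[\bigcap_{i\in I}\soI_i]$, I would argue purely set-theoretically: for every $j\in I$ we have $\bigcap_{i\in I}\soI_i \sue \soI_j$, hence $e\bigl[\bigcap_{i\in I}\soI_i\bigr] \sue e[\soI_j]$, and therefore $e\SO(\soI_j) = \bigmee e[\soI_j] \leq \bigmee e\bigl[\bigcap_{i\in I}\soI_i\bigr]$, the meet of the larger set being the smaller. Taking the join over all $j\in I$ gives $u \leq \bigmee e\bigl[\bigcap_{i\in I}\soI_i\bigr]$.

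For the converse $\bigmee e[\bigcap_{i\in I}\soI_i] \leq u$, I would invoke Proposition~\ref{p:basic-properties}(3), which yields $u = \bigmee\{\, e(b) ~|~ b\in L,\ u\leq e(b)\,\}$. Thus it suffices to check that $\bigmee e[\bigcap_{i\in I}\soI_i] \leq e(b)$ for every $b\in L$ with $u \leq e(b)$. Fix such a $b$. Then for each $j\in I$ we have $e\SO(\soI_j) \leq u \leq e(b)$, so compactness forces $b\in\soI_j$; as this holds for all $j$, we get $b\in\bigcap_{i\in I}\soI_i$. Hence $e(b)$ is one of the elements being met on the left, so $\bigmee e[\bigcap_{i\in I}\soI_i] \leq e(b)$, as required. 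Combining the two inequalities gives the claimed identity.

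I do not expect a genuine obstacle here; the only subtlety worth flagging is the temptation to appeal to the density axiom directly — which also works, since the computation in the previous paragraph is exactly what one feeds into the density criterion — whereas it is cleaner to observe that clause~(3) of Proposition~\ref{p:basic-properties} already packages the meet-decomposition of $u$ that the argument requires.
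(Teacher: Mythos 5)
Your proof is correct and matches the paper's argument in substance: the ``$\leq$'' direction is the same monotonicity-of-meets observation, and your use of Proposition~\ref{p:basic-properties}(3) together with compactness for the converse is exactly the computation the paper feeds into the density axiom (where it additionally carries along a Scott-open filter $\soII$ below $\bigmee e[\bigcap_{i}\soI_i]$, which, as you correctly note, is not actually needed once the meet-decomposition of $u$ is available). No gaps.
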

\begin{proof}
    The ``$\leq$'' inequality is immediate as $e\SO(\soI_i) = \bigmee e[\soI_i] \leq \bigmee e[\bigcap_{i\in I} \soI_i]$, for every $i\in I$. We show the other inequality by density. Let $e\SO(\soII)\leq \bigmee e[\bigcap_{i\in I} \soI_i]$ and $\bigvee_{i\in I} e\SO(\soI_i)\leq e(a)$ for some $\soII\in L\SO$ and $a\in L$. The former assumption implies, by compactness, that $\bigcap_{i\in I} \soI_i \sue \soII$ and the latter implies that $a\in \soI_i$, for all $i\in I$. Therefore, $a\in \bigcap_{i\in I} \soI_i \sue \soII$ and so $\bigmee e[\bigcap_{i\in I} \soI_i] \leq e(a)$.
\end{proof}

One can also reformulate the second axiom of canonical extensions for frames to be more similar to the one for distributive lattices (Section~\ref{s:abstract-dlat}).

\begin{lemma}\label{l:compact-open}
    Let $L$ be a frame and \emb{} be a monotone map satisfying density. Then, $e$ is compact if and only if it satisfies
\begin{axioms}
    \item[(Compact+)] If $\bigmee e\SO[\p F] \leq \bigvee e[D]$ for a filtered $\p F\sue L\SO$ and directed $D\sue L$, then $d\in \soI$ for some $\soI\in \p F$ and $d\in D$.
\end{axioms}
\end{lemma}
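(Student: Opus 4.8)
The plan is to show the two implications separately, with the (Compact+) $\Rightarrow$ (Compact) direction being essentially immediate and the converse requiring a density argument together with the preframe structure established in Proposition~\ref{p:basic-properties}.

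First I would prove that (Compact+) implies (Compact). Given $\soI\in L\SO$ and $a\in L$ with $\bigmee e[\soI]\leq e(a)$, apply (Compact+) to the singleton families $\p F=\{\soI\}$ and $D=\{a\}$, both of which are trivially filtered/directed. This yields $d\in\soII$ for some $\soII\in\p F$ and $d\in D$, i.e.\ $a\in\soI$, which is exactly (Compact).

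The substantive direction is (Compact) $\Rightarrow$ (Compact+). So assume $e$ satisfies density and compactness, let $\p F\sue L\SO$ be filtered and $D\sue L$ directed, and suppose $\bigmee e\SO[\p F]\leq \bigvee e[D]$. By Proposition~\ref{p:basic-properties}(1), $e$ is a preframe homomorphism, so $\bigvee e[D]=e(\dirvee D)$ is the image of a single element $\dirvee D\in L$; and by Proposition~\ref{p:basic-properties}(2), $e\SO$ is a co-preframe homomorphism on $L\SO$ (ordered by reverse inclusion), so the filtered meet $\bigmee e\SO[\p F]$ equals $e\SO(\bigmee\dirdown\,\p F)=e\SO(\dircup\p F)=\bigmee e[\dircup\p F]$, where $\dircup\p F=\bigcup_{\soI\in\p F}\soI$. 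Thus the hypothesis reads $\bigmee e[\soII']\leq e(\dirvee D)$ for the Scott-open filter $\soII'=\dircup\p F$. Applying compactness gives $\dirvee D\in\soII'=\bigcup\p F$, so $\dirvee D\in\soI$ for some $\soI\in\p F$. Since $\soI$ is Scott-open and $D$ is directed, $d\in\soI$ for some $d\in D$, which is exactly the conclusion of (Compact+).

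The main point to get right — and the only place density is genuinely used — is the identification of the two suprema/infima in the statement of (Compact+) with images of single elements under $e$ and $e\SO$; once Proposition~\ref{p:basic-properties} is invoked this is routine, but one should be careful that the union $\dircup\p F$ of a filtered family of Scott-open filters is again a Scott-open filter (this is exactly the formula $\bigmee_i\dirdown F_i=\dircup F_i$ for filtered meets in $\Filt(L)$ noted in the preliminaries, together with the fact that $L\SO$ is closed under filtered meets in $\Filt(L)$, which is precisely why $e\SO$ is a co-preframe homomorphism). I do not anticipate any real obstacle here; the lemma is a direct bookkeeping consequence of the structure already extracted from density and compactness.
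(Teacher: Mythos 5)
Your proposal is correct and follows essentially the same route as the paper: the (Compact+)$\Rightarrow$(Compact) direction by specialising to singletons, and the converse by using Proposition~\ref{p:basic-properties} to rewrite $\bigmee e\SO[\p F]$ as $e\SO(\dircup\,\p F)=\bigmee e[\dircup\,\p F]$ and $\bigvee e[D]$ as $e(\dirvee D)$, then applying compactness and Scott-openness. The only difference is that you spell out more explicitly why $\dircup\,\p F$ is again a Scott-open filter, which the paper leaves implicit.
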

\begin{proof}
    (Compact+) implies (Compact) since the latter is just a special case of the former, that is, when we take $\p F=\{\soI\}$ and $D = \{a\}$. The reverse direction is a consequence of Proposition~\ref{p:basic-properties}. We have that
    \[ e\SO(\bigcup\dirup\, \p F) = \bigmee\dirup\, e\SO[\p F]  \leq \dirvee e[D] = e(\dirvee D), \]
    and so, by compactness of $e$, $\dirvee D\in \bigcup \p F$. Hence, $\bigvee D\in \soI$, for some $\soI\in \p F$, and also $d\in \soI$, for some $d\in D$, since $\soI$ is Scott-open.
\end{proof}

This stronger version of compactness also specialises to a well-known consequence of the Hofmann--Mislove Theorem. For a filtered collection of compact saturated sets $\{ K_i \}_i$ and an open set $U$ of a sober space, $\bigcap_i K_i \sue U$ if and only if $K_i\sue U$ already for some $K_i$ (e.g.~\cite{jungsunderhauf96duality} or Theorem~II-1.21 in~\cite{gierzetal2003continuous}).

\medskip
In the next section we show, among others, that canonical extensions of locally compact frames are completely distributive lattices. This fact relies on the axiom of choice. One can prove a weaker property constructively for \emph{stably locally compact frames}, that is, for locally compact frames which satisfy:
\begin{align*}
    (\forall a,b,c)\qquad a \wbelow b \ete{and} a\wbelow c \qtq{implies} a\wbelow b\mee c.
\end{align*}
\begin{proposition}
    Let $L$ be a stably locally compact frame and $\emb$ its canonical extension, then $L\can$ is a frame and coframe.
\end{proposition}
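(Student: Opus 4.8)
\subsection*{Proof proposal}

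The plan is to reduce everything to the two density representations of Proposition~\ref{p:basic-properties}(3): the set $e\SO[L\SO]$ is join-dense in $L\can$ and the set $e[L]$ is meet-dense. First I would record the elementary criterion that a complete lattice $C$ with a join-dense subset $D$ satisfying $d\mee\bigvee X=\bigvee_{x\in X}(d\mee x)$ for all $d\in D$ and all $X\sue C$ is automatically a frame: any $d'\in D$ below $d\mee\bigvee X$ has $d'=d'\mee\bigvee X=\bigvee_x(d'\mee x)\le\bigvee_x(d\mee x)$, and one takes the join over such $d'$; the coframe statement is dual, with a meet-dense $D'$ and $d'\vee\bigmee X=\bigmee_x(d'\vee x)$. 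Hence it suffices to prove
\[
  e\SO(\soI)\mee \bigvee_{x\in X}x \;=\; \bigvee_{x\in X}\bigl(e\SO(\soI)\mee x\bigr)
  \qtq{and}
  e(a)\vee \bigmee_{x\in X}x \;=\; \bigmee_{x\in X}\bigl(e(a)\vee x\bigr),
\]
for all $\soI\in L\SO$, $a\in L$ and arbitrary $X\sue L\can$ (here $e$ is already an injective frame homomorphism by Proposition~\ref{p:basic-properties}(5), so $e[L]$ is closed under finite meets and joins).

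The place where stable local compactness enters is in controlling meets of Scott-open filters. I would first prove that $\soI\mee\soII\in L\SO$ whenever $\soI,\soII\in L\SO$: if $D$ is directed with $\dirvee D\ge a_0\mee b_0$ for $a_0\in\soI$, $b_0\in\soII$, then local compactness and Scott-openness of $\soI,\soII$ yield $c\wbelow a_0$ with $c\in\soI$ and $c'\wbelow b_0$ with $c'\in\soII$; since $c\mee c'\wbelow a_0$ and $c\mee c'\wbelow b_0$, stability gives $c\mee c'\wbelow a_0\mee b_0\le\dirvee D$, so $c\mee c'\le d$ for some $d\in D$, and $d\in\soI\mee\soII$. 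Density and compactness then give $e\SO(\soI)\mee e\SO(\soII)=e\SO(\soI\mee\soII)$: if $e\SO(\soIII)\le e\SO(\soI)\mee e\SO(\soII)$ then $\soI\cup\soII\sue\soIII$, hence $\soI\mee\soII\sue\soIII$, so $e\SO(\soIII)=\bigmee e[\soIII]\le\bigmee e[\soI\mee\soII]=e\SO(\soI\mee\soII)$. Iterating, $L\SO$ is closed under arbitrary meets and $e\SO$ preserves them; and, using the frame distributivity of $L$ once more (replacing each $b$ by $b\vee x$), one checks that $L\SO$ is itself a coframe.

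With these facts in hand the two displayed identities would be attacked by combining the lemma $\bigvee_i e\SO(\soI_i)=\bigmee e[\bigcap_i\soI_i]$ with Lemma~\ref{l:compact-open}: using join-density one reduces $X$ to a family $\{e\SO(\soI_i)\}_i$ (dually, meet-density of $e[L]$ reduces to a family $\{e(a_i)\}_i$), rewrites both sides as meets $\bigmee e[\,\cdot\,]$ over generated filters, and verifies the remaining comparison through the density axiom, feeding the hypotheses through compactness and --- for the part involving directed data --- through the strengthened compactness (Compact+) together with the frame distributivity of $L$; the finitary version of the comparison is immediate. I expect this last step --- pushing a single generator $e\SO(\soI)$ (resp.\ $e(a)$) past an \emph{arbitrary} join (resp.\ meet) rather than a finite one --- to be the main obstacle, and it is precisely there that stability of $\wbelow$ and the Hofmann--Mislove-type content of Compact+ must be used in tandem.
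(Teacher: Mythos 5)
Your reduction to generators and your preliminary facts are sound and match the structure of the paper's own proof: join-density of $e\SO[L\SO]$ and meet-density of $e[L]$ do reduce the problem to two ``restricted'' distributive laws, your argument that stability makes $L\SO$ closed under binary meets is correct, and so is the identity $e\SO(\soI)\mee e\SO(\soII)=e\SO(\soI\mee\soII)$. The problem is that everything after that --- which is where all the actual work lies --- is only described, not proved, and you say yourself that you expect it to be the main obstacle. Concretely, for the frame law you still have to show
\[
 e\SO(\soI)\mee\bigvee e\SO[K]\;\leq\;\bigvee\{e\SO(\soI)\mee e\SO(\soI')\mid\soI'\in K\}
\]
for arbitrary $K\sue L\SO$. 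The paper does this by density: given $e\SO(\soII)\leq e\SO(\soI)\mee\bigvee e\SO[K]$ and $\bigvee\{e\SO(\soI)\mee e\SO(\soI')\mid \soI'\in K\}\leq e(a)$, one uses the equivalence ``$e\SO(\soI)\mee e\SO(\soI')\leq e(a)$ iff $b\mee b'\leq a$ for some $b\in\soI$, $b'\in\soI'$'' (this is where stability enters), bounds $\bigvee e\SO[K]$ by the join of the resulting $e(b')$'s, applies (Compact+) to extract finitely many $b_1,\dots,b_n$ and $b'_1,\dots,b'_n$ with $e\SO(\soII)\leq e(b'_1\vee\dots\vee b'_n)$ and $e\SO(\soI)\leq e(b_i)$, and finally uses that $e$ is a frame homomorphism to distribute $e(b_1\mee\dots\mee b_n)\mee e(b'_1\vee\dots\vee b'_n)$ and land below $e(a)$. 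None of these steps appears in your sketch, and they are the content of the proposition.

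Moreover, the coframe half is not the formal dual of the frame half, so ``the coframe statement is dual'' will not carry you through. There one starts from $e\SO(\soI)\leq\bigmee\{e(a)\vee e(a')\mid a'\in A\}$ and must first \emph{interpolate}: local compactness turns $e\SO(\soI)\leq e(a\vee a')$ into $e\SO(\soI)\leq e\SO(\soII)\vee e\SO(\soII')$ with $e\SO(\soII)\leq e(a)$ and $e\SO(\soII')\leq e(a')$; stability is then needed a second time, to make the finite meets of the resulting $\soII'$'s a directed family in $L\SO$ so that (Compact+) applies; and the closing computation $(e(a)\vee e\SO(\soII'_1))\mee\dots\mee(e(a)\vee e\SO(\soII'_n))=e(a)\vee e\SO(\soII'_1\mee\dots\mee\soII'_n)$ uses the frame law for $L\can$ established in the first half --- so the two parts are neither independent nor dual. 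Two smaller points: closure of $L\SO$ under \emph{arbitrary} meets does not follow by ``iterating'' the binary case (you need that filtered meets of Scott-open filters are Scott-open), and the claims that $e\SO$ preserves arbitrary meets and that $L\SO$ is itself a coframe are neither justified in your sketch nor needed for the result.
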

\begin{proof}
    The proof follows the same strategy as the proof of Theorem~3 in~\cite{gehrke2014canonical}. However, we need to repeat the same argument twice because the assumption that $L$ is stably locally compact is used in different parts of the proof when proving that $L^\delta$ is a frame and coframe, respectively.

    Recall that a locally compact frame is stably locally compact if and only if the meet of two Scott-open filters in $\Filt(L)$ (in the reverse inclusion order) is also Scott-open. Observe that $e\SO\colon L\SO \to L\can$ preserves binary meets because, by Proposition~\ref{p:basic-properties}, $e\SO(\soI)\mee e\SO(\soI') = \bigvee \{ e\SO(\soII) \mid \soII\in L\SO,\ \soII\leq \soI,\ \soII \leq \soI'\} = e\SO(\soI\mee \soI')$.

    (1) We show that $L\can$ is a frame. First we prove a restricted version of the frame distributivity law. For $K \sue L\SO$ and $\soI\in L\SO$, we verify that
    \begin{align}
        e\SO(\soI) \mee \bigvee e\SO[K] \leq \bigvee \{ e\SO(\soI) \mee e\SO(\soI') \mid \soI' \in K\}.
        \tag{$\star$1}
        \label{eq:frame-simple}
    \end{align}
    By density it is enough to show that $e\SO(\soII) \leq e(a)$ for every $\soII\in L\SO$ and $a\in L$ such that $e\SO(\soII) \leq e\SO(\soI) \mee \bigvee e\SO[K]$ and $\bigvee \{ e\SO(\soI) \mee e\SO(\soI') \mid \soI' \in K\}\leq e(a)$. Because $L$ is stably locally compact, $e\SO(\soI)\mee e\SO(\soI') \leq e(a)$ iff $b\mee b' \leq a$ for some $b\in \soI$ and $b'\in \soI'$. Since $e\SO(\soI)\mee e\SO(\soI') \leq a$ for every $\soI'\in K$,
    \[ \bigvee e\SO[K] \leq \bigvee \{ e(b') \mid b' \in L \text{ s.t. } \exists \soI'\in K, b\in L\colon\ b\mee b' \leq a,\, b \in \soI,\, b' \in \soI' \}. \]
    By Lemma~\ref{l:compact-open}, this together with $e\SO(\soII) \leq e\SO(\soI) \mee \bigvee e\SO[K] \leq \bigvee e\SO[K]$ implies that there are $b_1, \dots, b_n$ and $b'_1, \dots, b'_n$ in $L$ such that $e\SO(\soI) \leq e(b_i)$, $b_i\mee b'_i \leq a$ (for $i=1,\dots,n$) and $e\SO(\soII) \leq e(b'_1\vee \ldots \vee b'_n)$. Therefore, $e\SO(\soII) \leq e\SO(\soI) \leq e(b)$ for $b= b_1 \mee \dots \mee b_n$ and, because $e(\ARG)$ is a frame homomorphism (by Proposition~\ref{p:basic-properties}), also
    \begin{align*}
        e\SO(\soII) &\leq e(b) \mee e(b'_1\vee \ldots \vee b'_n) = e(b\mee b'_1) \vee \ldots \vee e(b\mee b'_n) \\
        &\leq e(b_1\mee b'_1) \vee \ldots \vee e(b_n\mee b'_n) \leq e(a).
    \end{align*}
    This finishes the proof of \eqref{eq:frame-simple}. Since every element of $L\can$ is a join of elements in $L\SO$, to show that $L\can$ is a frame, it is enough to show that $u \mee \bigvee e\SO[K] \leq \bigvee \{ u \mee e\SO(\soI) \mid \soI \in K\}$ for arbitrary $u\in L\can$ and $K\sue L\SO$. From the previous,
    \begin{align*}
        u \mee \bigvee e\SO[K]
            &= \bigvee \{ e\SO(\soI) \in L\SO \mid e\SO(\soI) \leq u \mee \bigvee e\SO[K] \} \\
            &\leq \bigvee \{ e\SO(\soI) \mee \bigvee e\SO[K] \mid \soI \in L\SO,\ e\SO(\soI) \leq u \} \\
            &\leq \bigvee_{e\SO(\soI)\leq u}\ \bigvee_{\soI' \in K} e\SO(\soI)\mee e\SO(\soI') \\
            &= \bigvee_{\soI' \in K}\ \bigvee_{e\SO(\soI)\leq u}  e\SO(\soI)\mee e\SO(\soI')
            \leq \bigvee_{\soI' \in K} u \mee e\SO(\soI').
    \end{align*}

    (2) Showing that $L\can$ is a coframe follows the same pattern. First, we show that, for $a\in L$ and $A \sue L$,
    \begin{align}
        \bigmee \{ e(a)\vee e(a') \mid a' \in A\} \leq e(a)\vee \bigmee e[A].
        \tag{$\star$2}
        \label{eq:coframe-simple}
    \end{align}
    Let $\soI\in L\SO$ and $c\in L$ be such that $e\SO(\soI) \leq \bigmee \{ e(a)\vee e(a') \mid a' \in A\}$ and $e(a)\vee \bigmee e[A] \leq e(c)$. Because $L$ is locally compact and the embedding $e$ is compact, whenever $e\SO(\soI) \leq e(a)\vee e(a') = e(a \vee a')$, for an $a'\in A$, then $e\SO(\soI) \leq e(b)\vee e(b')$ for some $b, b'\in L$ such that $b\wbelow a$ and $b'\wbelow a'$. Moreover, $b \wbelow a$ iff there is some $\soII \in L\SO$ such that $a\in \soII \sue \upset b$ or, in other words, $e(b) \leq e\SO(\soII) \leq e(a)$ in $L\can$. Similarly, there is a $\soII'\in L\SO$ such that $e(b') \leq e\SO(\soII') \leq e(a')$ and so $e\SO(\soI) \leq e\SO(\soII) \vee e\SO(\soII')$. Consequently, since $e\SO(\soI) \leq e(a)\vee e(a')$ for every $a'\in A$,
    \[ \bigmee \{ e\SO(\soII') \mid \soII' \in L\SO \text{ s.t.}\,\, \exists a'\in A, \soII\in L\SO\colon a\in \soII,\, a' \in \soII',\, \soI \leq \soII\vee \soII' \} \leq \bigmee e[A] \]
    Since $L$ is stably locally compact, non-empty finite meets of the elements in the set on the left-hand side form a directed family in $L\SO$. Therefore, from $\bigmee e[A] \leq e(a)\vee \bigmee e[A] \leq e(c)$ it follows, by Lemma~\ref{l:compact-open}, that there are some $\soII_1, \dots, \soII_n$ and $\soII'_1, \dots, \soII'_n$ in $L\SO$ such that $e\SO(\soII_i) \leq e(a)$, $\soI \leq \soII_i\vee \soII'_i$ (for $i=1,\dots,n$) and $e\SO(\soII') \leq e(c)$ where $\soII' = \soII'_1\mee \dots\mee \soII'_n$. We obtain that
    \begin{align*}
        e\SO(\soI)
        &\leq e\SO(\soII_1 \vee \soII'_1) \mee \dots \mee e\SO(\soII_n \vee \soII'_n) \\
        &\leq (e(a) \vee e\SO(\soII'_1)) \mee \dots \mee (e(a) \vee e\SO(\soII'_n))
        = e(a) \vee e\SO(\soII') \leq e(c),
    \end{align*}
    where the only equality holds because $L\can$ is a frame. This finishes the proof of \eqref{eq:coframe-simple}. The rest is the same as in part (1).
\end{proof}

\section{Existence, uniqueness and concrete description}\label{s:concrete-description}

As we explained at the beginning of Section~\ref{s:abstract-frames}, we expect that the canonical extension \emb{} of a frame $L$ can be obtained by a construction by polarities similarly to distributive lattices and spaces. That is, we expect that
\[ L\can \cong \p G(L\SO, L, Z) \qtq{where} \soI Z a \ete{iff} a\in \soI. \]
(Recall the definition of $\p G(\ARG,\ARG,\ARG)$ from Section~\ref{s:abstract-dlat}.)

This follows from a standard argument, e.g.\ \cite{gehrkejansanapalmigiano2013delta}. Let us denote $\p G(L\SO, L, Z)$ by $C$. The crucial ingredient in the construction of $C$ is the pair of antitone maps $p$ and $q$ whose definitions, in this case, unfold as
\begin{align*}
    p\colon& \Ps(L\SO) \to \Ps(L),\quad M\mapsto \{ a\in L ~|~ \forall \soI\in M.\ a\in \soI\} = \bigcap M, \ete{and}\\
    q\colon& \Ps(L) \to \Ps(L\SO),\quad N\mapsto \{ \soI\in L\SO ~|~ \forall a\in N.\ a\in \soI\} = \{ \soI ~|~ N \sue \soI\}.
\end{align*}
Notice that the image of $p$ is always a filter on $L$ and, similarly, an image of $q$ is an ideal on $L\SO$, when $L\SO$ is taken to be ordered by set inclusion. Therefore, it is safe to restrict the adjoint maps to:
\begin{diagram}
    \Idl(L\SO) \ar[bend left=25]{rr}{p} \ar[draw=none, labels=description]{rr}{\bot} & & \Filt(L) \ar[bend left=25]{ll}{q}
\end{diagram}
Since we order $\Filt(L)$ by reverse inclusion, the antitone Galois adjunction becomes monotone, with $p\colon \p I \mapsto \bigvee \p I$ (evaluated in $(\Filt(L), \supseteq)$) on the left and $q$ on the right.
The maps $f\colon L\SO \to C$ and $g\colon L\to C$ are defined as
\begin{align*}
    f&\colon  \soII\mapsto qp(\{\soII\}) = q(\soII) = \{ \soI \in L\SO ~|~ \soI \supseteq \soII\} \qtq{and}\\
    g&\colon a\mapsto q(\{a\}) = \{ \soI\in L\SO ~|~ a\in \soI\}.
\end{align*}

In Proposition~\ref{p:basic-properties} we showed that having a canonical extension \emb{} of $L$ guarantees existence of a map $e\SO\colon L\SO \to L\can$ such that, for $f=e\SO$ and $g=e$, the conditions (1) and (2) of Fact~\ref{f:polarities} are satisfied (where (2) follows from compactness). This means, by Fact~\ref{f:polarities}.3, that if \emb{} is a canonical extension, it must be that $L\can$ is isomorphic to $C$. The existence of a canonical extension map $L\to L\can$ follows from the following lemma.

\begin{lemma}
    The map $g\colon L \to C$ defined above is the canonical extension of $L$, that is, it is dense and compact (in the sense of Definition~\ref{d:can-ext}).
\end{lemma}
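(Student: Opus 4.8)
The plan is to read both axioms of Definition~\ref{d:can-ext} straight off Fact~\ref{f:polarities}, since $C=\p G(L\SO,L,Z)$ was constructed precisely so that $f$ and $g$ satisfy its join/meet-density clause~(1) and its pairing clause~(2). First I would record the preliminaries: $C$ is a complete lattice (Fact~\ref{f:polarities}) carrying the subset order, its meets are computed as intersections (as for the fixpoints of any closure operator on a powerset), and $g$ is monotone, since if $a\leq b$ in $L$ and $a\in\soI$ then $b\in\soI$ because Scott-open filters are upsets, whence $g(a)\sue g(b)$. So $g$ is a bona fide candidate for a canonical extension map into a complete lattice.

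The only bookkeeping point worth spelling out is that $g\SO$ coincides with $f$. Unfolding the shorthand $g\SO(\soI)=\bigmee g[\soI]$ and using that the meet in $C$ is intersection,
\[ g\SO(\soI)=\bigcap_{a\in\soI}\{\,\soII\in L\SO ~|~ a\in\soII\,\}=\{\,\soII\in L\SO ~|~ \soI\sue\soII\,\}=q(\soI)=\phi(\{\soI\})=f(\soI). \]
Granting this, (Compact) is immediate: its hypothesis $\bigmee g[\soI]\leq g(a)$ reads $f(\soI)\leq g(a)$, which by Fact~\ref{f:polarities}.2 is equivalent to $\soI Z a$, i.e.\ to $a\in\soI$.

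For (Dense), assume $u,v\in C$ satisfy the antecedent, that $\bigmee g[\soI]\leq u$ together with $v\leq g(a)$ always forces $\bigmee g[\soI]\leq g(a)$. By Fact~\ref{f:polarities}.1 we have $u=\bigvee\{\,f(\soI) ~|~ f(\soI)\leq u\,\}$, so it suffices to show $f(\soI)\leq v$ for each $\soI$ with $f(\soI)\leq u$. Fix such a $\soI$. Again by Fact~\ref{f:polarities}.1, $v=\bigmee\{\,g(a) ~|~ v\leq g(a)\,\}$, so it is enough to verify $f(\soI)\leq g(a)$ for every $a$ with $v\leq g(a)$; but this is exactly the antecedent applied to the pair $(\soI,a)$, since $\bigmee g[\soI]=f(\soI)\leq u$ and $v\leq g(a)$. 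Hence $u\leq v$, which is density.

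I do not expect any real obstacle: the statement is essentially a translation of Fact~\ref{f:polarities} into the language of Definition~\ref{d:can-ext}, and the single step that needs a moment's care is the identity $g\SO=f$ — concretely, that meets in the Galois-closed lattice $C$ are genuine intersections, so that $\bigmee g[\soI]$ really equals $q(\soI)$.
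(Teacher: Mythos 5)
Your proposal is correct and follows essentially the same route as the paper: establish that meets in $C$ are intersections so that $\bigmee g[\soI]=q(\soI)=f(\soI)$, then read compactness off clause (2) of Fact~\ref{f:polarities} and density off clause (1). You merely spell out the density step in more detail than the paper, which leaves it as an immediate consequence of the two clauses.
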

\begin{proof}
    Recall that, since $C$ consists of the fixpoints of the closure operator $q(p(\ARG))$ on the frame $\Idl(L\SO)$, the meets in $C$ are computed as in $\Idl(L\SO)$. Hence, $f(\soI) = \bigmee g[\soI]$, for every $\soI\in L\SO$, since
    \[ \bigmee g[\soI] = \bigcap g[\soI] = \bigcap \{ g(a) ~|~ a\in \soI\} = \{ \soII ~|~ \forall a\in \soI.\ a\in \soII\} = f(\soI).\]

    As a consequence, $g$ is compact, by Fact~\ref{f:polarities}.1, and density follows from Fact~\ref{f:polarities}.2 and the fact that $f(\soI) \leq g(a)$ iff $a\in \soI$.
\end{proof}

To summarise, we have obtained the following.

\begin{theorem}\label{t:unicity-existence}
    Let $L$ be a frame. Its canonical extension \emb{} exists, it is uniquely determined, and $L\can\cong \p G(L\SO, L, Z)$ where $\soI Z a$ iff $a\in \soI$.
\end{theorem}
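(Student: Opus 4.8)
The plan is to assemble the statement from pieces already established, so the argument will be short. Existence follows immediately from the Lemma just proved: it shows that $g\colon L\to C$, where $C = \p G(L\SO, L, Z)$, is dense and compact, hence is a canonical extension of $L$ in the sense of Definition~\ref{d:can-ext}. It then remains only to establish the isomorphism $L\can\cong C$ for an arbitrary canonical extension, which will simultaneously yield uniqueness.

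To do this I would take an arbitrary canonical extension $\emb$ and invoke Proposition~\ref{p:basic-properties} to obtain the companion map $e\SO\colon L\SO\to L\can$, $\soI\mapsto \bigmee e[\soI]$. The key step is to check that the pair $f' = e\SO$ and $g' = e$ satisfies conditions (1) and (2) of Fact~\ref{f:polarities} for the polarity $(L\SO, L, Z)$. Condition (1) is precisely Proposition~\ref{p:basic-properties}(3): every $u\in L\can$ is both the meet of all $e(a)$ lying above it and the join of all $e\SO(\soI)$ lying below it. Condition (2) requires $e\SO(\soI)\leq e(a)$ if and only if $\soI Z a$, that is, if and only if $a\in\soI$; the forward implication is exactly compactness, while the backward implication is immediate since $a\in\soI$ forces $e\SO(\soI) = \bigmee e[\soI]\leq e(a)$.

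With conditions (1) and (2) verified, Fact~\ref{f:polarities}(3) produces a unique complete lattice isomorphism $\iota\colon L\can\to C$ with $\iota\circ e\SO = f$ and $\iota\circ e = g$, giving $L\can\cong\p G(L\SO, L, Z)$. Uniqueness then follows in the standard way: any two canonical extensions are each linked to $C$ by a unique isomorphism commuting with their respective structure maps, and composing one with the inverse of the other gives the unique isomorphism between them commuting with both embeddings $e$ and $e\SO$.

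The only point demanding care is the matching of hypotheses — confirming that Proposition~\ref{p:basic-properties}(3) is verbatim condition (1) of Fact~\ref{f:polarities}, and that compactness together with the definition of $e\SO$ supplies the biconditional in condition (2) — and making explicit that ``uniquely determined'' here means unique up to the unique isomorphism commuting with $e$ and $e\SO$. There is no genuine obstacle: the substantive work was already discharged in Proposition~\ref{p:basic-properties}, in the Lemma on $g$, and in Fact~\ref{f:polarities}.
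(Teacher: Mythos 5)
Your proposal is correct and follows essentially the same route as the paper: existence comes from the lemma showing $g\colon L\to C$ is dense and compact, and uniqueness comes from verifying conditions (1) and (2) of Fact~\ref{f:polarities} via Proposition~\ref{p:basic-properties}(3) and compactness, then applying Fact~\ref{f:polarities}(3). Your explicit note that the backward implication of condition (2) follows from the definition of $e\SO$ as a meet is a small point the paper leaves implicit, but the argument is otherwise identical.
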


When a frame $L$ is locally compact, then we know more about the structure of $L\can$. The embedding $L \into \Up(\pt(L))$ is also a canonical extension (Example~\ref{e:can-ext-space}) and therefore the frames $L\can$ and $\Up(\pt(L))$ are isomorphic (by Theorem~\ref{t:unicity-existence}). Because the lattice of saturated subsets is completely distributive, the lattice $L\can$ is too.

\begin{corollary}\label{c:completely-distributive}
    The canonical extension of a locally compact frame is a completely distributive lattice.
\end{corollary}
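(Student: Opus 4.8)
The plan is to bypass the abstract construction entirely and identify $L\can$ with the concrete model $\Up(\pt(L))$, for which complete distributivity is immediate. First I would observe that a locally compact frame $L$ has enough points. This is exactly the content of the Hofmann--Lawson duality recalled in the preliminaries: the adjunction between $\Oo$ and $\pt$ restricts to an equivalence (a duality) between locally compact frames and locally compact sober spaces, so the unit $L \to \Oo(\pt(L))$ is an isomorphism. Writing $X = \pt(L)$ with its topology $\tau$, we therefore have $L \cong \Oo(X)$ with $X$ sober.

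Next I would invoke Example~\ref{e:can-ext-space}, which shows that the inclusion $\Oo(X) \into \Up(X, \leq_\tau)$ is a canonical extension of $\Oo(X)$ whenever $X$ is sober. Transporting along the isomorphism $L \cong \Oo(X)$, this gives a canonical extension of $L$ whose underlying complete lattice is $\Up(X, \leq_\tau)$. By the uniqueness part of Theorem~\ref{t:unicity-existence}, any two canonical extensions of $L$ agree up to a (unique, structure-preserving) isomorphism, so $L\can \cong \Up(X, \leq_\tau)$ as complete lattices.

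It then remains to note that $\Up(X, \leq_\tau)$ is completely distributive. This is standard order theory: the up-sets of a poset $(X, \leq_\tau)$ are precisely the down-sets of the opposite poset, joins and meets in this lattice are computed as unions and intersections, and arbitrary unions of sets distribute over arbitrary intersections; hence the lattice satisfies the complete distributive law (and its dual). Chaining the isomorphisms gives the claim. The only real subtlety, and the step worth flagging, is the appeal to spatiality of locally compact frames in the first paragraph, which relies on the axiom of choice; this is why the statement is, as remarked in the introduction, not choice-free, whereas every other ingredient here is routine.
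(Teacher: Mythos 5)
Your proposal is correct and is essentially the paper's own argument: the paper likewise uses spatiality of locally compact frames (via Hofmann--Lawson duality) to identify $L$ with $\Oo(\pt(L))$, invokes Example~\ref{e:can-ext-space} and the uniqueness in Theorem~\ref{t:unicity-existence} to conclude $L\can \cong \Up(\pt(L))$, and finishes with the complete distributivity of up-set lattices. Your explicit flagging of the non-constructive step matches the paper's own caveat.
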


The concrete representation of $L\can$ as the fixpoints of the nucleus $q(p(\ARG))$ on $\Idl(L\SO)$ can be equivalently described as the fixpoints of $p(q(\ARG))$ on $\Filt(L)$. Observe that a filter $F\in \Filt(L)$ is a fixpoint of $p(q(\ARG))$ iff $F = \bigcap \{ \soI ~|~ F \sue \soI\}$. In other words, fixpoints of $p(q(\ARG))$ are precisely the intersections of Scott-open filters.

\begin{corollary}\label{c:filt-repre}
$L\can$ is isomorphic to the poset of filters obtained as intersections of Scott-open filters on $L$, ordered by reverse inclusion.
\end{corollary}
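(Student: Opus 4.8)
The plan is to assemble the statement from the concrete picture just developed: the only real work is to move the polarity $\p G(L\SO, L, Z)$ from the ``ideal side'' to the ``filter side'' and to recognise the closed sets there.

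First I would record that, by Theorem~\ref{t:unicity-existence}, $L\can$ is order-isomorphic to $C := \p G(L\SO, L, Z)$, and that $C$ is exactly the set of fixpoints of the closure operator $q\circ p$ computed inside $\Idl(L\SO)$: each Galois-closed subset $M$ of $L\SO$ equals $q(p(M))$, hence lies in the image of $q$ and is therefore an ideal, and on $\Idl(L\SO)$ the restricted maps $p,q$ of the displayed diagram agree with the unrestricted ones. So $C = \{\, \p I\in\Idl(L\SO) \mid \p I = q(p(\p I)) \,\}$, with the order inherited from $\Idl(L\SO)$.

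Next I would transport this along the Galois adjunction. For the monotone adjunction $p\dashv q$ between $\Idl(L\SO)$ and $(\Filt(L),\supseteq)$ (monotone once $\Filt(L)$ is re-ordered by reverse inclusion, as recorded before the diagram), the left adjoint $p$ restricts to an order isomorphism from the fixpoints of $q\circ p$ onto the fixpoints of the companion interior operator $p\circ q$ on $(\Filt(L),\supseteq)$, with inverse the restriction of $q$ — the standard statement that the closed sets of a closure operator and the open sets of its companion interior operator form isomorphic lattices. Hence $L\can \cong \{\, F\in\Filt(L) \mid F = p(q(F)) \,\}$ ordered by reverse inclusion. Unwinding the formulas for $p$ and $q$, one has $q(F) = \{\soI\in L\SO \mid F\sue\soI\}$ and $p$ sends an ideal to the intersection of its members, so $p(q(F)) = \bigcap\{\soI\in L\SO \mid F\sue\soI\}$; thus $F$ is a fixpoint precisely when $F = \bigcap\{\soI\in L\SO\mid F\sue\soI\}$. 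Every such $F$ is visibly an intersection of Scott-open filters, and conversely if $F = \bigcap_{i\in I}\soI_i$ with all $\soI_i\in L\SO$ then $F\sue\soI_i$ for every $i$ forces $\bigcap\{\soI\mid F\sue\soI\}\sue\bigcap_i\soI_i = F$, while $F\sue\bigcap\{\soI\mid F\sue\soI\}$ is automatic, so $F$ is a fixpoint. Therefore the fixpoints of $p\circ q$ are exactly the filters arising as intersections of Scott-open filters, and the corollary follows.

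I do not anticipate a genuine obstacle here; the single point needing care is the order bookkeeping — the polarity maps are antitone for $\sue$ but form a monotone adjunction with $p$ (not $q$) as left adjoint once $\Filt(L)$ carries reverse inclusion, so one must apply the ``closed sets $\cong$ open sets'' principle to this pair in the right direction, and check that restricting the ambient powersets to $\Idl(L\SO)$ and $\Filt(L)$ discards no Galois-closed sets.
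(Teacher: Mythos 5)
Your argument is correct and follows the paper's own route: identify $L\can$ with the fixpoints of $q\circ p$ on $\Idl(L\SO)$ via Theorem~\ref{t:unicity-existence}, transport to the fixpoints of $p\circ q$ on $(\Filt(L),\supseteq)$ by the standard Galois-adjunction isomorphism, and unwind $p(q(F))=\bigcap\{\soI\in L\SO\mid F\sue\soI\}$ to see these are exactly the intersections of Scott-open filters. The paper states this more tersely in the paragraph preceding the corollary; your version merely makes the order bookkeeping and the two containments explicit.
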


\section{On injectivity of \emb{}}\label{s:injectivity}

Let us explore when is the canonical extension map \emb{} injective. For $a, b\in L$, by Proposition~\ref{p:basic-properties}, $e(a) = e(b)$ if and only if, for all $\soI\in L\SO$, $e\SO(\soI) \leq e(a)$ iff $e\SO(\soI) \leq e(b)$, and this expression is (by compactness of $e$) equivalent to: $a\in \soI$ iff $b\in \soI$. Since Scott-open filters are in a bijective correspondence with preframe homomorphisms to $\two$, $e$ is injective if and only if the frame $L$ is spatial as a preframe. Furthermore, injectivity of canonical extensions can be also characterised purely in terms of sublocales.
\begin{proposition}\label{p:injective-subloc}
For a frame $L$, the canonical extension \emb{} is injective if and only if
\[  U \sue V \qtq{iff} \forall \text{compact fitted\,\footnotemark sublocale }K\sue L,\quad  K\sue U \text{ implies } K \sue V, \]
for all open sublocales $U$, $V\sue L$.

    Moreover, the Ultrafilter Principle is equivalent to the statement that every frame with injective canonical extension is spatial.
\end{proposition}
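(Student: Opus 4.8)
The plan is to prove the biconditional and the ``moreover'' separately, routing everything through the pointfree Hofmann--Mislove correspondence.

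\textbf{The biconditional.} Recall (e.g.\ \cite{picadopultr2012book}) that $\soI\mapsto K_\soI:=\bigcap_{a\in\soI}U_a$ is a choice-free, order-reversing bijection between $L\SO$ and the compact fitted sublocales of $L$, satisfying $K_\soI\sue U_a$ iff $a\in\soI$. Since open sublocales are exactly the $U_a$ and $U_a\sue U_b$ iff $a\leq b$, the right-hand side of the stated equivalence, read with $U=U_a$ and $V=U_b$, becomes: for all $a,b\in L$, if $a\in\soI\Rightarrow b\in\soI$ for every $\soI\in L\SO$, then $a\leq b$ --- the converse implication holding automatically because Scott-open filters are upsets. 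I would then compare this with the characterisation of injectivity recalled just above the statement, namely that $e$ is injective iff ``$a\in\soI\Leftrightarrow b\in\soI$ for all $\soI$'' forces $a=b$. If the sublocale condition holds and $a,b$ lie in exactly the same Scott-open filters, applying it in both directions gives $a\leq b$ and $b\leq a$, so $e$ is injective. Conversely, if $e$ is injective and $a\in\soI\Rightarrow b\in\soI$ for all $\soI$, then, since each $\soI$ is a filter, $a\mee b\in\soI\iff(a\in\soI\text{ and }b\in\soI)\iff a\in\soI$; hence $a$ and $a\mee b$ lie in the same Scott-open filters, so $a=a\mee b$ by injectivity, i.e.\ $a\leq b$. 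This yields the sublocale condition.

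\textbf{From spatiality of such frames to the Ultrafilter Principle.} Here I would feed in coherent frames. For any Boolean algebra $B$ the frame $\Idl(B)$ is coherent, hence locally compact, hence --- by Proposition~\ref{p:basic-properties}(5) --- has an injective canonical extension, so by hypothesis it is spatial. The points of $\Idl(B)$ are precisely the ultrafilters of $B$, so spatiality of $\Idl(B)$ says that the ultrafilters of $B$ order-separate its elements; imposing this for every Boolean algebra $B$ is exactly the Ultrafilter Principle.

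\textbf{From the Ultrafilter Principle to spatiality.} Conversely, assume the Ultrafilter Principle and let $L$ have injective canonical extension; then, as recalled before the statement, the Scott-open filters order-separate $L$. Given $a\not\leq b$, pick $\soI\in L\SO$ with $a\in\soI$, $b\notin\soI$ (so $\soI$ is proper), and set $\mathcal E=\{x\in L\mid b\leq x,\ x\notin\soI\}$. This set is non-empty (it contains $b$) and, precisely because $\soI$ is Scott-open, closed under directed joins; take a maximal element $q$ of $\mathcal E$. Then $q$ is a prime element of $L$: it is $\neq 1$ since $1\in\soI$, and if $x\mee y\leq q$ with $x\not\leq q$ and $y\not\leq q$, then $x\vee q$ and $y\vee q$ lie strictly above $q$ and above $b$, hence in $\soI$ by maximality, so $(x\vee q)\mee(y\vee q)=(x\mee y)\vee q=q\in\soI$, a contradiction. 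As $a\in\soI$ and $q\notin\soI$ we get $a\not\leq q$, so the frame homomorphism $p\colon L\to\two$ with $p(x)=1\iff x\not\leq q$ has $p(a)=1$, $p(b)=0$. Since every pair $a\not\leq b$ is separated by such a point, $L$ is spatial.

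\textbf{The expected obstacle.} The one non-elementary step above is the existence of a maximal element of $\mathcal E=\upset b\setminus\soI$, and the whole content of the assertion is that this is no stronger than the Ultrafilter Principle rather than full choice. Equivalently (via the first part), one needs the non-empty compact sublocale obtained by intersecting $K_\soI$ with the closed sublocale complementary to $U_b$ to have a point --- a pointfree incarnation of the Hofmann--Mislove theorem, known to be equivalent to the Ultrafilter Principle. I expect the main work of this half to be making that reduction precise: extracting the required point from the Ultrafilter Principle --- presumably via the prime separation theorem for distributive lattices applied to $L$, followed by a Scott-closure step to upgrade the resulting prime ideal to a completely prime one --- while keeping the argument within the strength of the Ultrafilter Principle.
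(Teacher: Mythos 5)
Your proof of the biconditional is correct and is essentially the paper's argument: both route through Johnstone's pointfree Hofmann--Mislove correspondence $\soI\mapsto K_\soI=\bigcap_{a\in\soI}U_a$ with its key property $K_\soI\sue U_a$ iff $a\in\soI$, reducing the sublocale condition to separation of elements of $L$ by Scott-open filters. Your explicit reconciliation of the one-sided condition (``$a\in\soI\Rightarrow b\in\soI$ for all $\soI$ implies $a\leq b$'') with the two-sided injectivity criterion, via the observation that $a$ and $a\mee b$ lie in the same Scott-open filters, is a detail the paper leaves implicit; it is correct. Your derivation of the Ultrafilter Principle from the spatiality statement, by feeding in the coherent frames $\Idl(B)$ for Boolean algebras $B$, is also fine and choice-free.

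The genuine gap is in the converse direction of the ``moreover'': your argument takes a maximal element of $\mathcal E=\upset b\setminus\soI$, and the existence of maximal elements in a nonempty family closed under directed joins is Zorn's Lemma, i.e.\ full choice --- strictly stronger than the Ultrafilter Principle, which is the whole point of the assertion. You flag this yourself, but the proposed repair (prime separation for the underlying distributive lattice followed by a ``Scott-closure step'' upgrading a prime filter to a completely prime one) is not carried out, and that upgrade is exactly where the difficulty lies: a prime filter containing $\soI$ and missing $b$ obtained from BPI need not be Scott-open, and there is no obvious closure operation producing a completely prime filter from it. This reduction is precisely the content of Ern\'e's Theorem~5.2 in~\cite{erne2007choiceless} (the Ultrafilter Principle is equivalent to ``every frame spatial as a preframe is spatial as a frame''), which is what the paper cites: since injectivity of \emb{} was already identified, in the discussion preceding the proposition, with $L$ being spatial as a preframe (Scott-open filters being the preframe points), the ``moreover'' is literally a restatement of Ern\'e's equivalence. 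So either cite that theorem, as the paper does, or supply its proof; as written, the hard direction is missing.
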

\begin{proof}
    From the previous discussion we know that $e$ is injective iff distinct elements of $L$ can be separated by Scott-open filters. By Johnstone's pointfree version of Hofmann--Mislove Theorem~(Lemma~3.4 in~\cite{johnstone1985vietoris}, see also~\cite{escardo2003joins}), Scott-open filters are in a bijective correspondence with compact fitted sublocales, via the correspondence
    \[ \soI \mapsto K_\soI = \bigcap \{ U_a ~|~ a\in \soI\}, \qtq{and} K \mapsto \{ a ~|~ K \sue U_a\}. \]
    From this it follows that the required separation property is equivalent to $e$ being injective because, for every $a\in L$ and $\soI\in L\SO$, we have that $a\in \soI$ iff $K_\soI \sue U_a$.

    The second part of the statement follows from Ern\'e's Theorem 5.2 in~\cite{erne2007choiceless} that the Ultrafilter Principle is equivalent to the statement that every frame spatial as a preframe is spatial as a frame.
\end{proof}
\footnotetext{Fitted means that it is an intersection of open sublocales. Being fitted is supposed to be the sublocale analogue of being a saturated subset.}

This proposition justifies that we can call frames with injective canonical extensions as frames which \emph{have enough compact fitted sublocales}.\footnote{Such a long name almost begs to be shortened to \emph{has enough cofis} (read as ``coffees'').}

\begin{remark}
    This notion is closely related to Mart\'in Escard\'o's definition of compactly generated locales~\cite{escardo2006compactly}. He proves that a Hausdorff locale is isomorphic to a colimit of its compact fitted sublocales if and only if its open sublocales are determined by the compact sublocales they contain.
\end{remark}

\begin{remark}
    Our construction of canonical extensions, as described in Corollary~\ref{c:filt-repre}, is exactly the same as the one used by Marcel Ern\'e~\cite{erne2007choiceless} to show that a preframe $P$ is spatial precisely whenever there is a complete lattice which is, in Ern\'e's terminology, its $\delta$-sober envelope. This means, in our terminology, that there is an embedding $e\colon P \into C$ into a complete lattice $C$ which is dense and compact.

    In fact, from the beginning of our investigations we were hoping to find connections between \cite{erne2007choiceless} and the theory of canonical extensions, as was foreseen by Mai Gehrke (private communication).
    % When compared to~\cite{erne2007choiceless}, we don't restrict to the case when \emb{} is injective. Moreover, we came to our construction by an analysis of the structure of canonical extensions of frames from its abstract description, that is, by its required geometrical properties.
\end{remark}

\begin{remark}
    Assuming the Ultrafilter Principle, we obtain that \emb{} is injective if and only if $L$ is spatial. The left-to-right direction follows from Proposition~\ref{p:injective-subloc} and the reverse direction is a consequence of Example~\ref{e:can-ext-space} and Theorem~\ref{t:unicity-existence}.
\end{remark}

\section{Extensions of maps}
\label{s:extensions}

In the theory of canonical extensions, when extending maps between lattices to maps between their canonical extensions, two constructions are usually considered. The so-called $\sigma$-extension and $\pi$-extension are also used to lift additional operations on the lattice to its canonical extension and, depending on which extension is taken, certain types of equations are preserved. Moreover, we will be mostly concerned with extending so-called \emph{perfect} maps and homomorphisms.

Before we define the two extensions let us introduce a new convention. Writing the applications of the maps \emb{} and $e\SO\colon L\SO\into L\can$ everywhere gets tiring at times. For this reason, by abuse of notation, we will write elements of $L\can$ in the image of $e$ without explicitly writing the application of $e(\ARG)$ and, similarly, for the elements of $L\SO$ and applications of $e\SO(\ARG)$. For example, for a monotone map $f\colon L\to M$, $a\in L$, and $\soI\in L\SO$, the expressions
\[ \soI \leq a \qtq{and} \bigmee f[\soI] \leq f(a) \]
are interpreted as
\[ e\SO(\soI) \leq e(a) \qtq{and} \bigmee e[f[\soI]] \leq e(f(a)), \qtq{respectively.} \]
Furthermore, if $f$ is a preframe homomorphism, then taking the preimage of a Scott-open filter gives again a Scott-open filter. In such case, we denote the corresponding monotone map by
\[ f\iSO\colon M\SO \to L\SO,\quad \soII \mapsto f\inv[\soII]. \]
Then, with the convention from above, $f\iSO(\soII) \leq u$ means $e\SO(f\iSO(\soII)) \leq u$.

With this in mind, let us define the two extensions $f^\sigma, f^\pi\colon L\can\to M\can$ for a monotone map $f\colon L\to M$ between two frames $L$ and $M$ as
\begin{align*}
    f^\sigma\colon u &\mapsto \bigvee \{ \bigmee f[\soI] ~|~ \soI\in L\SO,\ \soI \leq u \},\\
    f^\pi\colon    u &\mapsto \bigmee \{ f(a) ~|~ a\in L,\ u \leq a \}.
\end{align*}
These definitions are adapted from the usual definitions of extensions of monotone maps for canonical extensions of distributive lattices, see e.g.\ \cite{gehrke2014canonical}.

In the following we show that calling those maps ``extensions'' is justified by the fact that, under mild assumptions, they both indeed extend the original map.

\begin{lemma}\label{l:exten-squares}
    Let $h\colon L\to M$ be a preframe homomorphism and $f\colon L\to M$ a monotone map between two frames. The diagrams
    \[
    \cdmatrix{
        L \ar{r}{h}\ar[swap]{d}{e_L} \& M\ar{d}{e_M} \\
        L\can \ar{r}{h^\sigma} \& M\can
    }
    \qqtqq{and}
    \cdmatrix{
        L \ar{r}{f}\ar[swap]{d}{e_L} \& M\ar{d}{e_M} \\
        L\can \ar{r}{f^\pi} \& M\can
    }
\]
    commute.
\end{lemma}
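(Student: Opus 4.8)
The plan is to verify each square at an arbitrary element $a\in L$, i.e.\ to show $f^\pi(e_L(a))=e_M(f(a))$ and $h^\sigma(e_L(a))=e_M(h(a))$. In both cases the first step is to feed $e_L(a)$ into the defining formulas and use compactness of $e_L$ to simplify the index sets: for $f^\pi$ the admissible $b\in L$ are exactly those with $e_L(a)\le e_L(b)$, and for $h^\sigma$ the admissible $\soI\in L\SO$ are exactly those with $a\in\soI$ (since $\bigmee e_L[\soI]\le e_L(a)$ is equivalent to $a\in\soI$).

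For the square with $f^\pi$, the inequality $f^\pi(e_L(a))\le e_M(f(a))$ is immediate, as $b:=a$ is admissible in the defining meet. The converse inequality $e_M(f(a))\le f^\pi(e_L(a))$ amounts to showing $e_M(f(a))\le e_M(f(b))$ for every $b$ with $e_L(a)\le e_L(b)$. Here I would use that $e_L$ is an order-embedding, which holds in our (locally compact) setting: $e_L$ preserves finite meets by Proposition~\ref{p:basic-properties}(1) and is injective by Proposition~\ref{p:basic-properties}(5), so $e_L(a)\le e_L(b)$ forces $e_L(a\mee b)=e_L(a)$, hence $a\mee b=a$, i.e.\ $a\le b$; monotonicity of $f$ and of $e_M$ then gives $e_M(f(a))\le e_M(f(b))$.

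For the square with $h^\sigma$, write $h^\sigma(e_L(a))=\bigvee\{\,\bigmee e_M[h[\soI]]\mid \soI\in L\SO,\ a\in\soI\,\}$. One direction is easy: if $a\in\soI$ then $h(a)\in h[\soI]$, so $\bigmee e_M[h[\soI]]\le e_M(h(a))$, and hence the whole join is $\le e_M(h(a))$. For the reverse inequality $e_M(h(a))\le h^\sigma(e_L(a))$ I would invoke density of $e_M$: given $\soII\in M\SO$ and $c\in M$ with $\bigmee e_M[\soII]\le e_M(h(a))$ and $h^\sigma(e_L(a))\le e_M(c)$, I must derive $\bigmee e_M[\soII]\le e_M(c)$. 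Compactness of $e_M$ turns the first hypothesis into $h(a)\in\soII$, i.e.\ $a\in h\inv[\soII]$; since $h$ is a preframe homomorphism, $\soI:=h\inv[\soII]$ is a Scott-open filter containing $a$, so $\bigmee e_M[h[\soI]]$ is one of the joinands of $h^\sigma(e_L(a))$ and therefore $\bigmee e_M[h[\soI]]\le e_M(c)$. Finally $h[\soI]=h[h\inv[\soII]]\sue\soII$, so $\bigmee e_M[\soII]\le \bigmee e_M[h[\soI]]\le e_M(c)$, as required.

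I expect the $h^\sigma$ half to be the more substantial one: the key idea is to push the Scott-open filter $\soII$ that witnesses density back through $h$ to obtain the Scott-open filter $h\inv[\soII]$, and this is exactly where the preframe-homomorphism hypothesis on $h$ is used. The $f^\pi$ half is shorter, but its converse inequality genuinely relies on $e_L$ being an order-embedding; for a merely monotone $f$ this cannot be dispensed with, so this is the point at which local compactness of $L$ (or, more generally, $L$ having enough compact fitted sublocales) enters.
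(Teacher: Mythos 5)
Your proof of the $\sigma$-square for $h$ is correct and is essentially the paper's own argument: the inequality $h^\sigma(e_L(a))\leq e_M(h(a))$ because every admissible $\soI$ contains $a$, and the converse by density of $e_M$, pulling the witnessing $\soII$ back to the Scott-open filter $h\inv[\soII]\ni a$ and using $h[h\inv[\soII]]\sue\soII$. That is indeed exactly where the preframe hypothesis enters.

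The $\pi$-square is where you diverge. The lemma places no hypothesis on $L$ beyond being a frame, and $f$ is only monotone; yet your proof of $e_M(f(a))\leq f^\pi(e_L(a))$ appeals to injectivity of $e_L$ (via Proposition~\ref{p:basic-properties}(5), hence to local compactness) in order to turn $e_L(a)\leq e_L(a')$ into $a\leq a'$. Under that extra hypothesis your argument is complete, and you are right that something of the sort is unavoidable: in general $e_L(a)\leq e_L(a')$ only says that every Scott-open filter containing $a$ also contains $a'$, which a merely monotone $f$ need not respect. Concretely, if $e_L$ fails to reflect the order, say $e_L(a')\leq e_L(a)$ with $a'\not\leq a$, then the characteristic map $f\colon L\to\two$ of $\upset a'$ is monotone and gives $f^\pi(e_L(a'))\leq e_M(f(a))=0$ while $e_M(f(a'))=1$ in $\two\can\cong\two$ (Observation~\ref{o:lift-points}), so the square fails at $a'$. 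The paper's own proof of this half is the single line $f^\pi(a)=\bigmee\{f(a')\mid a\leq a'\}=f(a)$, which tacitly reads the index set as $\upset a$ in $L$ rather than as $\{a'\mid e_L(a)\leq e_L(a')\}$ --- the same assumption you make explicitly. So your proposal proves the lemma for frames whose canonical extension map is injective (equivalently, frames with enough compact fitted sublocales, which covers every use of the lemma later in the paper), but measured against the statement verbatim it, like the paper's proof, relies on an unstated hypothesis for the $\pi$-half; the honest fix is to add that hypothesis to the second square (or restrict $f$ there to maps factoring through the order on $L\can$).
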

\begin{proof}
    Let $a\in L$. The inequality $\soI \leq a$ implies that $a\in \soI$ and $\bigmee h[\soI] \leq h(a)$. Hence, $h^\sigma(a)\leq h(a)$. We prove the reverse direction by density of $e_M$, let $\soII\leq h(a)$ and $h^\sigma(a) \leq b$ for some $\soII\in M\SO$ and $b\in M$. From the first inequality, by compactness, $h(a) \in \soII$ and so $a\in h\inv[\soII] = h\iSO(\soII)$. Since $h\iSO(\soII)\leq a$, we have that $\bigmee h[h\iSO(\soII)]\leq h^\sigma(a) \leq b$. Finally, observe that $\soII \leq \bigmee h[h\iSO(\soII)]$ in $M\can$, because $h[h\iSO(\soII)]\sue \soII$, and so $\soII\leq b$.

    For the second square, $f^\pi(a) = \bigmee \{ f(a') ~|~ a'\in L,\ a\leq a'\} = f(a)$.
\end{proof}

In the following we assume that the order between maps is always the pointwise order. It is immediate to see that the operators $(\ARG)^\sigma$ and $(\ARG)^\pi$ are monotone with respect to this order.

\begin{lemma}\label{l:basic-extension}
    For monotone maps $f\colon L\to M$ and $g\colon M\to N$  between frames,
    \begin{enumerate}
        \item $f^\sigma \leq f^\pi$,
        \item $f^\sigma = f^\pi$ whenever $f$ is a preframe homomorphism,
        \item $g^\pi f^\pi \leq (gf)^\pi$, and
        \item $(\id_L)^\sigma = (\id_L)^\pi = \id_{L\can}$.
    \end{enumerate}
\end{lemma}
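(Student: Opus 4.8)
The plan is to verify each of the four items directly from the definitions of $f^\sigma$ and $f^\pi$ together with the basic structural facts already established, in particular Proposition~\ref{p:basic-properties}(3) which expresses every $u\in L\can$ both as a join of elements $e\SO(\soI)$ below it and as a meet of elements $e(a)$ above it.

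For item (1), fix $u\in L\can$ and let $\soI\in L\SO$ with $\soI\le u$, and let $a\in L$ with $u\le a$; then $\soI\le a$, so $a\in\soI$ by compactness, whence $\bigmee f[\soI]\le f(a)$. Taking the join over all such $\soI$ on the left and the meet over all such $a$ on the right gives $f^\sigma(u)\le f^\pi(u)$. For item (2), assuming $f$ is a preframe homomorphism it suffices, given (1), to prove $f^\pi(u)\le f^\sigma(u)$; I would argue by density of $e_M$. Let $\soII\in M\SO$ and $b\in M$ satisfy $\soII\le f^\pi(u)$ and $f^\sigma(u)\le b$; one pulls back $\soII$ along $f$ to obtain the Scott-open filter $f\iSO(\soII)=f\inv[\soII]$, shows $f\iSO(\soII)\le u$ (using that $u$ is a meet of elements $e(a)$ above it together with $\soII\le f^\pi(u)$ and compactness in $M\can$ to conclude $a\in f\inv[\soII]$ for each such $a$), and then observes $\bigmee f[f\iSO(\soII)]\le f^\sigma(u)\le b$ while $\soII\le\bigmee f[f\iSO(\soII)]$ since $f[f\inv[\soII]]\sue\soII$; this mirrors the argument already given in the proof of Lemma~\ref{l:exten-squares}. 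Item (4) is essentially immediate: $(\id_L)^\pi(u)=\bigmee\{e(a)\mid u\le e(a)\}=u$ by Proposition~\ref{p:basic-properties}(3), and similarly $(\id_L)^\sigma(u)=u$ from the companion join formula in the same proposition, and these agree with $\id_{L\can}$.

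The remaining and least mechanical item is (3), the inequality $g^\pi f^\pi\le(gf)^\pi$. Here I would unfold $(g^\pi f^\pi)(u)=g^\pi\bigl(\bigmee\{f(a)\mid a\in L,\ u\le a\}\bigr)$; by monotonicity of $g^\pi$ this is $\le g^\pi(f(a))=\bigmee\{g(b)\mid b\in M,\ f(a)\le b\}$ for each fixed $a$ with $u\le a$, and taking $b=f(a)$ shows this is $\le g(f(a))$, i.e.\ $\le (gf)(a)$. Since this holds for every $a\in L$ with $u\le a$, taking the meet over all such $a$ yields $(g^\pi f^\pi)(u)\le\bigmee\{(gf)(a)\mid a\in L,\ u\le a\}=(gf)^\pi(u)$. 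The only subtlety to be careful about is that $g^\pi$ need not preserve the meet defining $f^\pi(u)$ — hence only an inequality is obtained — and one must invoke the fact that $g^\pi$ is monotone (stated in the paragraph preceding the lemma) to pass the meet inside. I expect no genuine obstacle; the main point requiring attention is to keep the two proof-by-density arguments in (2) aligned with the abbreviation conventions just introduced, so that e.g.\ $\soII\le\bigmee f[f\iSO(\soII)]$ is read correctly as an inequality in $M\can$ between $e_M$-images.
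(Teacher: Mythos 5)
Your proof is correct. Items (1), (2) and (4) follow essentially the paper's own argument (the paper derives $(\id_L)^\sigma=\id$ from item (2) rather than from the join formula of Proposition~\ref{p:basic-properties}(3), but that is a cosmetic difference). Where you genuinely diverge is item (3): the paper argues by density of $e_N$, fixing $\soIII\leq g^\pi(f^\pi(u))$ and $(gf)^\pi(u)\leq c$, and identifies the set $B=\{b\in M\mid f^\pi(u)\leq b\}$ as the intersection $\bigcap\{\soII\in M\SO\mid f[F_u]\sue\soII\}$ using compactness and the fact that $M\can$ is join-generated by $M\SO$, before concluding from $f[F_u]\sue B$. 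Your argument dispenses with all of that: you only use that $g^\pi\colon M\can\to N\can$ is monotone as a map, that $f^\pi(u)\leq f(a)$ for each $a$ with $u\leq a$, and that $g^\pi(f(a))\leq g(f(a))$ (which is half of Lemma~\ref{l:exten-squares}), then take the meet over $a\in F_u$. This is a purely order-theoretic computation that never invokes density, compactness, or Scott-open filters, and it is the standard argument in the lattice-theoretic canonical-extension literature; it is cleaner than the paper's and exposes that the inequality holds for entirely formal reasons. One small correction: the monotonicity you need for this step is that of the \emph{map} $g^\pi\colon M\can\to N\can$ (immediate from its definition as a meet over a shrinking index set), not the monotonicity of the \emph{operator} $(\ARG)^\pi$ with respect to the pointwise order on maps, which is what the paragraph preceding the lemma actually asserts; the fact you need is true but is not the one stated there.
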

\begin{proof}
    (1) Let $u\in L\can$ and let $\soI\leq u \leq a$ for some $\soI\in L\SO$ and $a\in L$. Then $a\in \soI$ and so $\bigmee f[\soI] \leq f(a)$.

    (2) For the reverse direction, let $\soII \leq f^\pi(u)$ and $f^\sigma(u) \leq b$ for some $\soII\in M\SO$ and $b\in M$. Let us denote the filter $\{ a \in L ~|~ u\leq a\}$ by $F_u$. By density of $e_M$, $\soII \leq f^\pi(u) = \bigmee f[F_u]$ is equivalent to $f[F_u] \sue \soII$ and hence also to $F_u \sue f\inv[\soII]$. By our assumption, $f$ is a preframe homomorphism and so $f\iSO(\soII) = f\inv[\soII]$ is a Scott-open filter. Consequently, $f\iSO(\soII) \leq \bigmee F_u = u$ and, therefore, $\soII \leq \bigmee f[f\iSO(\soII)] = f^\sigma(f\iSO(\soII)) \leq f^\sigma(u) \leq b$.

    (3) Let $\soIII \leq g^\pi(f^\pi(u))$ and $(gf)^\pi(u)\leq c$ for some $\soIII\in N\SO$ and $c\in N$. Because $M\can$ is join-generated by elements of $M\SO$, the statement $f^\pi(u) = \bigmee f[F_u] \leq b$ is equivalent to
    \[ (\forall \soII\in M\SO)\quad f[F_u] \sue \soII \qtq{implies} b\in \soII \]
    (this is because, by compactness, $\soII \leq \bigmee f[F_u]$ iff $f[F_u] \sue \soII$). In other words, $f^\pi(u) \leq b$ iff $b\in B$ where
    \[  B = \bigcap \{ \soII\in M\SO ~|~  f[F_u] \sue \soII \}. \]
    Therefore, $\soIII \leq g^\pi(f^\pi(u)) = \bigmee \{ g(b) ~|~ b\in M \text{ s.t.\ } f^\pi(u) \leq b\} = \bigmee g[B]$.
    On the other hand $(gf)^\pi(u)= \bigmee g[f[F_u]]$. Therefore, from $f[F_u] \sue B$ we obtain the desired $\soIII \leq \bigmee g[B] \leq \bigmee g[f[F_u]] \leq b$.
    % By definition, $\soIII \leq g(b)$ for all $b\in M$ such that $f^\pi(u) \leq b$.
    % Combining this with the previous findings we obtain that $\soIII \leq \bigmee g[C]$ where

    (4) By (2) it is enough to show the identity for $\id^\pi$, which follows from the definition of $\pi$-extensions as $\id^\pi(u) = \bigmee F_u = u$.
\end{proof}

In the rest of the text, in cases when $f^\sigma$ is equal to $f^\pi$, we denote the extension map by $f\can$.

\subsection{Perfect maps}
In order to proceed with our theory, we have to assume further properties about our maps. We say that a monotone map $f\colon L\to M$ between frames is \emph{perfect} if, for every $\soI\in L\SO$, the smallest filter containing $f[\soI]$ is Scott-open. In other words, $f$ is perfect if the map
   \[ f\SO\colon \soI\mapsto \upset \{ \bigmee M ~|~ M \sue f[\soI] \text{ and $M$ is finite} \} \]
is a well defined map $L\SO\to M\SO$.

Note that, in the locally compact case, this definition agrees with the other definitions for frame homomorphism known under the same name.
\begin{lemma}
    Let $h\colon L\to M$ be a frame homomorphism with $L$ locally compact. The following three statements are equivalent:
    \begin{enumerate}
        \item $c \wbelow a$ implies $h(c) \wbelow h(a)$, for every $a,c \in L$.
        \item $h$ is perfect.
        \item The right adjoint of $h$ preserves directed joins.
    \end{enumerate}
\end{lemma}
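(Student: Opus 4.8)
The plan is to prove the cycle of implications $(1) \Rightarrow (2) \Rightarrow (3) \Rightarrow (1)$. The ingredients I will use repeatedly are: the characterisation that, since $L$ is locally compact, $c \wbelow a$ holds iff $a \in \soI \sue \upset c$ for some $\soI \in L\SO$; the fact that $\{c \mid c \wbelow a\}$ is directed with supremum $a$; and the right adjoint $h_*$ of the frame homomorphism $h$ (which exists since $h$ preserves all joins), which is monotone, satisfies $\id_L \leq h_* h$ and $h h_* \leq \id_M$, and carries directed sets to directed sets.

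For $(1) \Rightarrow (2)$, I fix $\soI \in L\SO$ and consider the smallest filter $h\SO(\soI) = \upset\{\bigmee N \mid N \sue h[\soI],\ N \text{ finite}\}$ containing $h[\soI]$; as this is always a filter, it only remains to check that it is Scott-open. Given a directed $D \sue M$ with $\dirvee D \in h\SO(\soI)$, there are $a_1, \dots, a_n \in \soI$ with $h(a_1) \mee \cdots \mee h(a_n) \leq \dirvee D$. Since $\soI$ is a filter and $h$ preserves finite meets, $a := a_1 \mee \cdots \mee a_n$ lies in $\soI$ and $h(a) \leq \dirvee D$. As $\soI$ is Scott-open and $a = \dirvee\{c \mid c \wbelow a\}$, some $c \in \soI$ satisfies $c \wbelow a$, so $(1)$ gives $h(c) \wbelow h(a) \leq \dirvee D$, whence $h(c) \leq d$ for some $d \in D$; then $d \in h\SO(\soI)$ because $h(c) \in h[\soI]$.

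For $(3) \Rightarrow (1)$, assume $h_*$ preserves directed joins and let $c \wbelow a$. If $D \sue M$ is directed with $h(a) \leq \dirvee D$, then applying $h_*$ gives $a \leq h_* h(a) \leq h_*(\dirvee D) = \dirvee h_*[D]$; since $c \wbelow a$ and $h_*[D]$ is directed, $c \leq h_*(d)$ for some $d \in D$, so $h(c) \leq h h_*(d) \leq d$, proving $h(c) \wbelow h(a)$.

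The implication $(2) \Rightarrow (3)$ is the one I expect to be the main obstacle, since it requires converting the filter-theoretic statement ``$h\SO(\soI) \in M\SO$ for every $\soI$'' into the statement that $h_*$ preserves directed joins. For a directed $D \sue M$ only $h_*(\dirvee D) \leq \dirvee h_*[D]$ needs proof, and writing $h_*(\dirvee D) = \bigvee\{a \mid h(a) \leq \dirvee D\}$ and each such $a$ as $\dirvee\{c \mid c \wbelow a\}$ reduces this to showing $c \leq \dirvee h_*[D]$ whenever $c \wbelow a$ and $h(a) \leq \dirvee D$. Picking $\soI \in L\SO$ with $a \in \soI \sue \upset c$, perfectness makes $h\SO(\soI)$ Scott-open; since $h(a) \in h[\soI] \sue h\SO(\soI)$ and $h(a) \leq \dirvee D$, some $d \in D$ lies in $h\SO(\soI)$, so $d \geq h(a_1) \mee \cdots \mee h(a_n) = h(a')$ with $a' := a_1 \mee \cdots \mee a_n \in \soI \sue \upset c$. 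Hence $c \leq a' \leq h_*(d) \leq \dirvee h_*[D]$, as required. The delicate point is exactly this double reduction through local compactness, which is what makes the Scott-open filter $h\SO(\soI)$ available precisely where a single member of $D$ is needed to dominate the image of something below $c$.
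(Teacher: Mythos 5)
Your proof is correct. The paper does not actually write this argument out --- it only points to Theorem~IV-1.4 of Gierz et al.\ and says the strategy must be ``adapted for Scott-open filters'' --- and your cycle $(1)\Rightarrow(2)\Rightarrow(3)\Rightarrow(1)$ is exactly that adaptation: the $(3)\Rightarrow(1)$ and the adjunction bookkeeping are the cited argument verbatim, while the two steps passing through $h\SO(\soI)$ correctly use local compactness (directedness of $\{c \mid c \wbelow a\}$ for the Scott-openness of $h\SO(\soI)$, and the characterisation $c \wbelow a$ iff $a \in \soI \sue \upset c$ for the reduction in $(2)\Rightarrow(3)$), so nothing is missing.
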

\begin{proof}
    The strategy of the proof is the same as in the proof of Theorem~IV-1.4 in~\cite{gierzetal2003continuous}. It needs to be slightly adapted for Scott-open \emph{filters}.
\end{proof}
% \begin{proof}
%     $(2) \Rightarrow (1)$: Let $c\wbelow a$. Recall that $c \wbelow a$ iff there exists a $\soI\in L\SO$ such that $a\in \soI \sue \upset c$.
%      Therefore, $h(a) \in h\SO(\soI) \sue h[\upset c] \sue \upset h(c)$ and so $h(c) \wbelow h(a)$.
% 
%     $(3) \Rightarrow (2)$: Assume that $\dirvee D\in h\SO(\soI) = \upset h[\soI]$ for some $\soI\in L\SO$ and a directed $D\sue M$. Then $h(a) \leq \dirvee D$ for some $a\in \soI$. By the assumption, $a \leq h_*(\dirvee D) = \dirvee h_*[D]$ where $h_*$ is the right adjoint of $h$. Therefore, there is a $d\in D$ such that $h_*(d) \in \soI$. Finally, $d\in \upset h[\soI]$ because $h(h_*(d)) \in h[\soI]$ and $h(h_*(d)) \leq d$.
% 
%     $(1) \Rightarrow (3)$: Assume that $c\wbelow h_*(\dirvee D)$ for some $c\in L$ and $D\sue M$ directed. By the assumption $h(c) \wbelow h(h_*(\dirvee D)) \leq \dirvee D$. Therefore, there is a $d\in D$ such that $h(c)\leq d$. Hence, $c\leq h_*(d) \leq \dirvee h_*[D]$. Because $L$ is locally compact it follows that $h_*(\dirvee D) \leq \dirvee h_*[D]$ and the other inequality is immediate.
%      % For the reverse direction, it is enough to check that $\upset h[\soI] = h\SO(\soI)$ is Scott-open for every $\soI\in L\SO$. Let $\dirvee D\in \upset h[\soI]$ for some directed $D\sue M$. That means that for some $a\in \soI$, $h(a) \leq \dirvee D$. Since $L$ is locally compact and $\soI$ is Scott-open, there is a $c\wbelow a$ such that $c\in \soI$. Moreover, $f(c) \wbelow f(a)$ because $h$ preserves $\wbelow$ and so $f(c) \leq d$ for some $d\in D$.
% \end{proof}

Observe that if a monotone map $f\colon L\to M$ is perfect then its $\sigma$-extension can be written in a more compact way:
\begin{align*}
    f^\sigma\colon u &\mapsto \bigvee \{ f\SO(\soI) ~|~ \soI\in L\SO,\ \soI \leq u \}.
\end{align*}

\begin{lemma}\label{l:perfect-extension}
    Let $f\colon L\to M$ and $g\colon M\to N$ be monotone maps between frames. If $f$ is perfect and $g$ preserves finite meets then

    \begin{enumerate}
        \item the diagrams
        \[
          \cdmatrix{
              L\SO \ar{r}{f\SO}\ar[swap,hook]{d}{e\SO_L} \& M\SO\ar[hook]{d}{e\SO_M} \\
              L\can \ar{r}{f^\sigma} \& M\can
          }
          \qqtqq{and}
          \cdmatrix{
              L\SO \ar{r}{f\SO}\ar[swap,hook]{d}{e\SO_L} \& M\SO\ar[hook]{d}{e\SO_M} \\
              L\can \ar{r}{f^\pi} \& M\can
          }
        \]
        commute, and
        \item $(gf)^\sigma \leq g^\sigma f^\sigma$.
    \end{enumerate}
\end{lemma}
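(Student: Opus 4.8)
The plan is to establish part~(1) by identifying several elements of $M\can$ with one and the same expression, and then to derive part~(2) directly from part~(1).

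\textbf{Part~(1).} Recall the convention of Section~\ref{s:extensions}, under which $f\SO(\soI)$, $f^\sigma(\soI)$, $f^\pi(\soI)$ and $\bigmee f[\soI]$ all name elements of $M\can$; I would show that all four coincide, which gives both squares at once. Since $f$ is perfect, $f^\sigma$ has the compact description $f^\sigma(u)=\bigvee\{\,f\SO(\soII)\mid\soII\in L\SO,\ \soII\le u\,\}$. Compactness of $e_L$ forces $e\SO_L$ to reflect order, i.e.\ $e\SO_L(\soII)\le e\SO_L(\soI)$ iff $\soI\sue\soII$ (the nontrivial implication being compactness, as already used in Proposition~\ref{p:basic-properties}(2)); since moreover $f\SO$ and $e\SO_M$ are monotone, the join defining $f^\sigma(\soI)$ is attained at the top index $\soII=\soI$, so $f^\sigma(\soI)=f\SO(\soI)$. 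For $f^\pi$, compactness of $e_L$ gives ``$\soI\le a$ iff $a\in\soI$'', hence $f^\pi(\soI)=\bigmee\{\,f(a)\mid a\in\soI\,\}=\bigmee f[\soI]$. Finally $f\SO(\soI)$ is by definition the filter of $M$ generated by $f[\soI]$, so every element of it dominates a finite meet $\bigmee G$ with $G\sue f[\soI]$; since $e_M$ preserves finite meets, $e_M(\bigmee G)=\bigmee e_M[G]\ge\bigmee e_M[f[\soI]]$, while $f[\soI]\sue f\SO(\soI)$ yields the opposite inequality of meets, so $f\SO(\soI)=\bigmee e_M[f\SO(\soI)]=\bigmee f[\soI]$. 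Both squares therefore commute.

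\textbf{Part~(2).} Fix $u\in L\can$ and a Scott-open filter $\soI$ of $L$ with $\soI\le u$. By part~(1) and monotonicity of $f^\sigma$, $f\SO(\soI)=f^\sigma(\soI)\le f^\sigma(u)$ in $M\can$, so the Scott-open filter $f\SO(\soI)$ of $M$ is one of the filters admissible in the join defining $g^\sigma(f^\sigma(u))$; hence $\bigmee g[f\SO(\soI)]\le g^\sigma(f^\sigma(u))$. It therefore suffices to show $\bigmee(gf)[\soI]\le\bigmee g[f\SO(\soI)]$, since taking the join over all $\soI\le u$ then gives $(gf)^\sigma(u)\le g^\sigma(f^\sigma(u))$. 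For this inequality I would re-run the generation argument of part~(1) with $g$ in the role of $e_M$: any $b\in f\SO(\soI)$ dominates some finite meet $f(a_1)\mee\dots\mee f(a_n)$ with $a_1,\dots,a_n\in\soI$, and since both $g$ and $e_N$ preserve finite meets,
\[ e_N(g(b)) \ \ge\ e_N\bigl(g(f(a_1))\mee\dots\mee g(f(a_n))\bigr) \ =\ \bigmee\nolimits_{i} e_N\bigl(g(f(a_i))\bigr) \ \ge\ \bigmee(gf)[\soI]. \]
Taking the meet over all $b\in f\SO(\soI)$ gives $\bigmee g[f\SO(\soI)]\ge\bigmee(gf)[\soI]$, as required.

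\textbf{The main obstacle.} The only step that is not bookkeeping, and the one place the hypothesis on $g$ is genuinely used, is this generation argument: replacing $f[\soI]$ by the filter $f\SO(\soI)$ it generates leaves the meet unchanged once $g$ is applied, precisely because $g$ carries the finite meets occurring in $f\SO(\soI)$ to finite meets of elements of $g[f[\soI]]$; if $g$ were merely monotone, $g(\bigmee G)$ could fall below $\bigmee(gf)[\soI]$ and the argument would collapse. Perfectness of $f$ is needed only so that $f\SO(\soI)$ is genuinely a Scott-open filter of $M$ and hence may be substituted into the join computing $g^\sigma$. I do not expect equality in part~(2): a Scott-open filter $\soII$ of $M$ with $\soII\le f^\sigma(u)$ need not be of the form $f\SO(\soI)$ for any $\soI\le u$, so there is no symmetric argument for the reverse inequality.
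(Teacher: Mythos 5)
Your proposal is correct and follows essentially the same route as the paper: part (1) by evaluating $f^\sigma$ and $f^\pi$ at $e\SO_L(\soI)$ via compactness and identifying both with $\bigmee f[\soI]=e\SO_M(f\SO(\soI))$, and part (2) by inserting $f\SO(\soI)$ as an admissible index in the join for $g^\sigma(f^\sigma(u))$ and using finite-meet preservation of $g$ to compare $\bigmee g[f\SO(\soI)]$ with $\bigmee gf[\soI]$. The only cosmetic difference is that you prove one inequality where the paper records an equality, which suffices for the claim.
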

\begin{proof}
    (1) For a $\soI\in L\SO$, clearly $f^\sigma(\soI) = \bigvee \{ f\SO(\soI') ~|~ \soI'\in L\SO,\ \soI' \leq \soI\} = f\SO(\soI)$ and $f^\pi(\soI) = \bigmee \{ f(a) ~|~ a\in L,\ \soI \leq a \} = \bigmee f[\soI]$ which is equal to $f\SO(\soI)$ when interpreted by the map $e\SO\colon L\SO \into L\can$.

    (2) Let $u\in L\can$ and let $\soI\in L\SO$ be such that $\soI \leq u$. By monotonicity of $f^\sigma$ and by (1) we have that $f\SO(\soI) = f^\sigma(\soI) \leq f^\sigma(u)$. Therefore,
    \[ \bigmee g[f\SO(\soI)] \leq \bigvee \{ \bigmee g[\soII] ~|~ \soII \leq f^\sigma(u) \} = g^\sigma(f^\sigma(u)). \]
    Moreover, because $g$ preserves finite meets, the leftmost expression simplifies as follows
    \begin{align*}
        \bigmee g[f\SO(\soI)]
            &= \bigmee \{ g(\bigmee M) ~|~ M \sue f[\soI] \text{ and $M$ is finite}\} \\
            &= \bigmee \{ \bigmee g[M] ~|~ M \sue f[\soI] \text{ and $M$ is finite}\}
            = \bigmee gf[\soI].
    \end{align*}
    Since $(g f)^\sigma(u) = \bigvee \{ \bigmee g f[\soI] ~|~ \soI \leq u\}$ we get that $(g f)^\sigma(u) \leq g^\sigma(f^\sigma(u))$.
\end{proof}

\begin{proposition}\label{p:perfect-is-adjoint}
    Let $f\colon L\rightleftarrows M \colon g$ be monotone maps such that $f$ is the left adjoint of $g$. If $f$ is perfect then $f^\sigma$ is the left adjoint of $g^\pi$.
\end{proposition}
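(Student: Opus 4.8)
\subsection*{Proof proposal}

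The plan is to verify the hom-set characterisation of adjunctions directly: to prove that for all $u\in L\can$ and $w\in M\can$,
\[ f^\sigma(u) \leq w \qtq{iff} u \leq g^\pi(w). \]
Throughout I use the notational convention of Section~\ref{s:extensions} (suppressing $e_L$, $e_M$, $e\SO_L$, $e\SO_M$), the compact form $f^\sigma(u) = \bigvee\{ f\SO(\soI) \mid \soI\in L\SO,\ \soI\leq u\}$ available because $f$ is perfect, the identity $f\SO(\soI) = \bigmee f[\soI]$ (as in the proof of Lemma~\ref{l:perfect-extension}(1)), and the following standard facts: compactness in the form $\soI\leq a \Leftrightarrow a\in \soI$; that every $v\in L\can$ equals $\bigmee\{a \mid v\leq a\}$ and every $v\in M\can$ equals $\bigmee\{b \mid v\leq b\}$ (Proposition~\ref{p:basic-properties}(3)); density (Definition~\ref{d:can-ext}); the adjunction inequalities $a\leq g(f(a))$ and $f(g(b))\leq b$; and the fact that $g$, being a right Galois adjoint, preserves all meets, in particular finite ones.

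For the implication $u\leq g^\pi(w) \Rightarrow f^\sigma(u)\leq w$, it suffices to show $f\SO(\soI)\leq w$ for every $\soI\in L\SO$ with $\soI\leq u$, and for this, since $w = \bigmee\{ b \mid w\leq b\}$, it suffices to show $f\SO(\soI)\leq b$ for every $b\in M$ with $w\leq b$. Fix such a $b$. Then $\soI\leq u\leq g^\pi(w)\leq g(b)$, so $g(b)\in \soI$ by compactness, hence $f(g(b))\in f[\soI]$, and therefore $f\SO(\soI) = \bigmee f[\soI] \leq f(g(b)) \leq b$ using the adjunction inequality $f(g(b))\leq b$. This settles one direction.

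For the implication $f^\sigma(u)\leq w \Rightarrow u\leq g^\pi(w)$, since $g^\pi(w) = \bigmee\{ g(b) \mid w\leq b\}$ it suffices to show $u\leq g(b)$ for every $b\in M$ with $w\leq b$; fix such a $b$ and prove $u\leq g(b)$ by density. So let $\soI\in L\SO$ and $a\in L$ satisfy $\soI\leq u$ and $g(b)\leq a$; the goal is $\soI\leq a$. From $\soI\leq u$ and perfectness, $f\SO(\soI)\leq f^\sigma(u)\leq w\leq b$, so compactness of $e_M$ (applicable precisely because $f\SO(\soI)$ is a Scott-open filter) gives $b\in f\SO(\soI)$, i.e.\ $b \geq f(c_1)\mee\dots\mee f(c_n)$ for some $c_1,\dots,c_n\in \soI$. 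Applying $g$, using that it preserves finite meets together with $c_i\leq g(f(c_i))$, yields $g(b) \geq g(f(c_1))\mee\dots\mee g(f(c_n)) \geq c_1\mee\dots\mee c_n$. As $\soI$ is a filter, $c_1\mee\dots\mee c_n\in\soI$, and as $\soI$ is an upset, $g(b)\in\soI$; hence $\soI\leq g(b)\leq a$, as required.

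I expect the second direction to be the main obstacle, and within it the step that converts the order-theoretic statement $f\SO(\soI)\leq w\leq b$ in $M\can$ into the \emph{finitary} datum $b\geq f(c_1)\mee\dots\mee f(c_n)$ with $c_i\in\soI$: this is exactly where both hypotheses are essential, since perfectness of $f$ guarantees $f\SO(\soI)\in M\SO$ so that compactness of $e_M$ applies, while the adjunction supplies the inequalities and the meet-preservation of $g$ needed to transport the conclusion back into $L$ and into $\soI$. Note that one cannot shortcut the argument by identifying $f^\sigma$ with $f^\pi$ via Lemma~\ref{l:basic-extension}(2), as $f$ need not be a preframe homomorphism; the two extensions genuinely play different roles on the two sides of the adjunction.
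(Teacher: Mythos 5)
Your proof is correct, but it takes a genuinely different route from the one in the paper. The paper establishes the adjunction via the unit--counit inequalities: $f^\sigma g^\pi \leq f^\pi g^\pi \leq (fg)^\pi \leq \id^\pi = \id$ using Lemma~\ref{l:basic-extension}, and $\id = \id^\sigma \leq (gf)^\sigma \leq g^\sigma f^\sigma \leq g^\pi f^\sigma$ using Lemma~\ref{l:perfect-extension}.2 (which is where the fact that $g$, being a right adjoint, preserves finite meets enters). That argument is a three-line formal computation, but it leans on the two composition lemmas as black boxes. You instead verify the hom-set characterisation $f^\sigma(u)\leq w \Leftrightarrow u\leq g^\pi(w)$ directly from density, compactness and the meet-generation of $L\can$ by $e[L]$, and your argument is sound at every step: in particular, the key move in the second direction --- using perfectness to know $f\SO(\soI)\in M\SO$ so that compactness of $e_M$ turns $f\SO(\soI)\leq b$ into the finitary datum $b\geq f(c_1)\mee\dots\mee f(c_n)$ with $c_i\in\soI$, and then using meet-preservation of $g$ together with $c_i\leq g(f(c_i))$ to land $g(b)$ back in $\soI$ --- is exactly the content that the paper packages inside Lemma~\ref{l:perfect-extension}.2. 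What your approach buys is a self-contained proof that makes visible precisely where each hypothesis is used; what the paper's approach buys is brevity and reuse of lemmas already needed elsewhere (e.g.\ in Theorem~\ref{t:lift-perfect-homo}). Your closing remark that one cannot shortcut via $f^\sigma=f^\pi$ is also well taken, since $f$ is only assumed monotone and perfect, not a preframe homomorphism.
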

\begin{proof}
    Recall that $f$ and $g$ being adjoint means that $fg\leq \id$ and $\id \leq gf$. Therefore, by Lemma~\ref{l:basic-extension}, $f^\sigma g^\pi \leq f^\pi g^\pi \leq (fg)^\pi \leq \id^\pi = \id$. Because right adjoins preserve arbitrary meets, we can use Lemma~\ref{l:perfect-extension} to show the other inequality: $\id = \id^\sigma \leq (g f)^\sigma \leq g^\sigma f^\sigma \leq g^\pi f^\sigma$.
\end{proof}

\begin{theorem}\label{t:lift-perfect-homo}
    If $h\colon L\to M$ is a perfect frame homomorphism between two frames, then $h\can\colon L\can \to M\can$ is a complete lattice homomorphism, that is, it preserves arbitrary joins and meets.
\end{theorem}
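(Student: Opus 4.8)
The plan is to prove the two halves separately: that $h\can$ preserves arbitrary joins, and that it preserves arbitrary meets; the first is essentially free. Since $h$ is a frame homomorphism it preserves all joins and so has a right adjoint $g\colon M\to L$, and since $h$ is in particular a preframe homomorphism, Lemma~\ref{l:basic-extension}(2) gives $h^\sigma=h^\pi$, a map we write $h\can$. Because $h$ is perfect, Proposition~\ref{p:perfect-is-adjoint} tells us $h\can=h^\sigma$ is the left adjoint of $g^\pi$, and a left adjoint preserves arbitrary joins. So all the work is in showing $h\can$ preserves arbitrary meets.

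For meets the inequality $h\can(\bigmee_i u_i)\le\bigmee_i h\can(u_i)$ is just monotonicity. For the reverse, using the $\pi$-formula $h\can(\bigmee_i u_i)=\bigmee\{\,h(a)\mid a\in L,\ \bigmee_i u_i\le a\,\}$ (with the abbreviation conventions of Section~\ref{s:extensions} in force), it suffices to show that for every $a\in L$ with $\bigmee_i u_i\le a$ one has $\bigmee_i h\can(u_i)\le h(a)$. Unravelling this through density of $e_M$ and compactness, it is in turn enough to check that every $\soII\in M\SO$ with $\soII\le\bigmee_i h\can(u_i)$ satisfies $h(a)\in\soII$. The natural device is the preimage filter $h\iSO(\soII)=h\inv[\soII]$, which is Scott-open because $h$ is a preframe homomorphism; I claim that $h\iSO(\soII)\le u_i$ for every $i$. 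Granting this, $h\iSO(\soII)\le\bigmee_i u_i\le a$, and compactness yields $a\in h\inv[\soII]$, i.e.\ $h(a)\in\soII$, as needed.

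The crux is thus the claim: $\soII\le h\can(u_i)$ implies $h\iSO(\soII)\le u_i$. Write $u_i=\bigvee\{\soI\mid\soI\in L\SO,\ \soI\le u_i\}$ by Proposition~\ref{p:basic-properties}(3). Since $h\can=h^\sigma$ preserves joins and sends $\soI\mapsto h\SO(\soI)$ for $\soI\in L\SO$ (the left square of Lemma~\ref{l:perfect-extension}(1)), we get $h\can(u_i)=\bigvee\{h\SO(\soI)\mid\soI\in L\SO,\ \soI\le u_i\}$. By the identity $\bigvee_j e\SO(\soI_j)=\bigmee e[\bigcap_j\soI_j]$ (the unlabelled lemma preceding Lemma~\ref{l:compact-open}), applied on the $M$-side, $h\can(u_i)=\bigmee e_M[F_i]$ where $F_i=\bigcap\{h\SO(\soI)\mid\soI\in L\SO,\ \soI\le u_i\}$; hence $\soII\le h\can(u_i)$ forces $F_i\sue\soII$ by compactness. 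Because $h[\soI]\sue h\SO(\soI)$ for every $\soI$, the filter $G_i=\bigcap\{\soI\mid\soI\in L\SO,\ \soI\le u_i\}$ satisfies $h[G_i]\sue F_i\sue\soII$, so $G_i\sue h\inv[\soII]=h\iSO(\soII)$. Applying the same join-equals-meet identity on the $L$-side gives $u_i=\bigmee e_L[G_i]\ge\bigmee e_L[h\inv[\soII]]=e\SO_L(h\iSO(\soII))$, i.e.\ $h\iSO(\soII)\le u_i$, which proves the claim and hence the theorem.

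I do not expect a deep difficulty here, but a bookkeeping one: since $M$ need not be locally compact, $e_M$ need not be injective and there is no recourse to points or to the completely distributive model $\Up(\pt(-))$, so every step must be carried out through density, compactness, and the behaviour of Scott-open filters. The one genuinely load-bearing point is that perfectness makes $h$ commute with the passage to Scott-open filters in both directions — forwards via $h\SO$, backwards via the preimage $h\iSO$ — and that the identity $\bigvee_j e\SO(\soI_j)=\bigmee e[\bigcap_j\soI_j]$ transports the inclusion $G_i\sue h\inv[\soII]$ between the two copies of the canonical extension. I would also check the degenerate cases (an empty index set, or $\{\soI\mid\soI\le u_i\}$ empty), but these collapse to the statements that $h\can$ preserves top and bottom, which is immediate since $h\can$ preserves joins and extends $h$, and $h$ preserves $0$ and $1$.
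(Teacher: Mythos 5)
Your proof is correct and follows essentially the same route as the paper's: joins come from Proposition~\ref{p:perfect-is-adjoint}, and meets reduce to the key claim that $\soII\le h\can(u_i)$ forces $h\iSO(\soII)\le u_i$, after which compactness of $e_L$ finishes the argument. The only difference is local: the paper gets that claim in one line from the $\pi$-description $h\can(u_i)=\bigmee h[F_i]$ with $F_i=\{a\mid u_i\le a\}$ (so that $F_i\sue h\iSO(\soII)$ and $u_i=\bigmee F_i$), whereas you take a slightly longer but valid detour through the $\sigma$-description, $h\SO$, and the join-to-meet lemma.
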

\begin{proof}
    From Proposition~\ref{p:perfect-is-adjoint} we know that $h\can$ is a left adjoint. Therefore, $h\can$ preserves arbitrary joins. What is left to show is that $h\can$ also preserves arbitrary meets.

    Let $\{u_i\}_{i\in I} \sue L\can$, and let $\soII \leq \bigmee_i h\can(u_i)$ and $h\can(\bigmee_i u_i) \leq b$ for some $\soII \in M\SO$ and $b\in M$. Denote the filter $\{ a\in L ~|~ u_i\leq a\}$ by $F_i$. The inequality $\soII \leq \bigmee_i h\can(u_i) \leq h\can(u_i) = \bigmee h[F_i]$ implies that, by compactness of $e_M$, $h[F_i] \sue \soII$ and so $F_i \sue h\inv[\soII] = h\iSO(\soII)$. Because, for all $i\in I$, $h\iSO(\soII) \leq \bigmee F_i = u_i$, we obtain that $h\iSO(\soII) \leq \bigmee_i u_i$. By compactness of $e_L$, $G \sue h\iSO(\soII)$, where $G = \{ a\in L ~|~ \bigmee_i u_i \leq a\}$, and so $h[G] \sue \soII$. Consequently, $\soII \leq \bigmee h[G] = h\can(\bigmee_i u_i)$.
\end{proof}

From Lemmas~\ref{l:basic-extension} and~\ref{l:perfect-extension}, we obtain that $(\ARG)\can$ preserves the identity morphisms and that $(h g)\can \leq h\can g\can \leq (h g)\can$, whenever $h$ and $g$ are perfect frame homomorphisms. Combining all this with Corollary~\ref{c:completely-distributive} and Theorem~\ref{t:lift-perfect-homo} gives the following.

\begin{corollary}\label{c:functor}
    The mapping $L\mapsto L\can$ and $h\mapsto h\can$ constitutes a functor from the category of locally compact frames and perfect homomorphisms to the category of completely distributive lattices and complete lattice homomorphisms.
\end{corollary}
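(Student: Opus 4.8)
The plan is to assemble the functor from pieces already in place, so the proof will be short. On objects there is nothing new: Corollary~\ref{c:completely-distributive} says that $L\can$ is a completely distributive lattice whenever $L$ is locally compact, so $L\mapsto L\can$ lands in the target category. On morphisms, I would first note that for a perfect frame homomorphism $h\colon L\to M$ the extension $h\can$ is unambiguously defined: a frame homomorphism is in particular a preframe homomorphism, so $h^\sigma=h^\pi$ by Lemma~\ref{l:basic-extension}(2), and by Theorem~\ref{t:lift-perfect-homo} this common value $h\can$ is a complete lattice homomorphism $L\can\to M\can$. Thus both assignments have the right type.

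Before checking the functor equations I would verify that the source is indeed a category, i.e.\ that perfect frame homomorphisms compose. Let $g\colon L\to M$ and $h\colon M\to N$ be perfect frame homomorphisms; then $hg$ is a frame homomorphism, and it is perfect: given $\soI\in L\SO$, put $\soII=g\SO(\soI)$, the smallest (Scott-open) filter containing $g[\soI]$. Since $h$ preserves finite meets, $h[\soII]$ is contained in the smallest filter generated by $hg[\soI]$, while conversely $hg[\soI]\subseteq h[\soII]$; hence these two sets have the same filter-closure, and it is Scott-open because $h$ is perfect. So $hg$ is perfect.

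It remains to check the functor identities. Preservation of identities is immediate from Lemma~\ref{l:basic-extension}(4): $(\id_L)\can=(\id_L)^\pi=\id_{L\can}$. For composition I would sandwich $(hg)\can$ between $h\can g\can$ from both sides. Since $g$ is perfect and $h$ (being a frame homomorphism) preserves finite meets, Lemma~\ref{l:perfect-extension}(2) gives $(hg)\can=(hg)^\sigma\le h^\sigma g^\sigma=h\can g\can$; and Lemma~\ref{l:basic-extension}(3) gives $h\can g\can=h^\pi g^\pi\le (hg)^\pi=(hg)\can$. Hence $(hg)\can=h\can g\can$, and $L\mapsto L\can$, $h\mapsto h\can$ is a functor of the asserted kind.

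I do not expect a genuine obstacle here; this is a bookkeeping corollary. The only points deserving a moment's care are matching the hypotheses of the two composition lemmas correctly (Lemma~\ref{l:perfect-extension}(2) needs the \emph{first} map perfect and the \emph{second} only finite-meet preserving, whereas Lemma~\ref{l:basic-extension}(3) holds for arbitrary monotone maps), and the closure of perfect frame homomorphisms under composition noted above, which is what makes the source category well defined.
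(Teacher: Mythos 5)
Your proof is correct and takes essentially the same route as the paper, which likewise assembles the corollary from Corollary~\ref{c:completely-distributive}, Theorem~\ref{t:lift-perfect-homo}, Lemma~\ref{l:basic-extension}(3)--(4) and Lemma~\ref{l:perfect-extension}(2) via the sandwich $(hg)\can \leq h\can g\can \leq (hg)\can$. Your explicit verification that perfect frame homomorphisms are closed under composition (so the source category is well defined) is a correct and welcome addition that the paper leaves implicit.
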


\begin{remark}
    In the theory of canonical extensions, $\sigma$-- and $\pi$-extensions are often used in order to lift operations on a given lattice to its canonical extension. It seems that most of the arguments would go through for extensions of perfect operations on a frame. However, we leave checking whether this is indeed the case for future investigations.
\end{remark}

\subsection{Perfect sublocales}
The map extensions can be used to lift certain sublocales of $L$ to sublocales of $L\can$. We say that a sublocale $S\sue L$ is \emph{perfect} if the associated quotient homomorphism $L\onto S$ is perfect.

\begin{lemma}
    For a perfect sublocale $S\sue L$ of a locally compact frame $L$, $S\can$ is isomorphic to a sublocale of $L\can$.
\end{lemma}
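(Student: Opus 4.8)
The plan is to realise $S\can$ as a frame quotient of $L\can$ and then pass to the associated sublocale. Write $h\colon L\onto S$ for the quotient homomorphism associated with the sublocale $S$; by hypothesis $h$ is perfect, and it is onto. Since $h$ is in particular a preframe homomorphism, Lemma~\ref{l:basic-extension}(2) gives $h^\sigma=h^\pi$, so the extension $h\can\colon L\can\to S\can$ is defined and, by Lemma~\ref{l:exten-squares}, extends $h$; and since $h$ is a perfect frame homomorphism, Theorem~\ref{t:lift-perfect-homo} makes $h\can$ a complete lattice homomorphism. Because $L$ is locally compact, $L\can$ is a frame by Corollary~\ref{c:completely-distributive}, so it makes sense to speak of its sublocales.

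The heart of the proof is to show that $h\can$ is surjective. By Proposition~\ref{p:basic-properties}(3) every $u\in S\can$ is the join of the elements $e\SO_S(\soII)$ lying below it, with $\soII$ ranging over $S\SO$; as $h\can$ preserves joins, it is enough to hit each generator $e\SO_S(\soII)$. Fix such a $\soII$ and set $\soI=h\inv[\soII]$, which is a Scott-open filter on $L$ since $h$ is a preframe homomorphism. Because $h$ is onto and $\soII\sue S$ one gets $h[\soI]=h[h\inv[\soII]]=\soII$, whence
\[ h\SO(\soI)=\upset\{\bigmee M\mid M\finsue h[\soI]\}=\upset\{\bigmee M\mid M\finsue\soII\}=\soII, \]
the last step because $\soII$ is a filter. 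Feeding this into the commuting square of Lemma~\ref{l:perfect-extension}(1) for the perfect map $h$ yields $h\can(e\SO_L(\soI))=e\SO_S(h\SO(\soI))=e\SO_S(\soII)$, so $e\SO_S(\soII)$ is in the image of $h\can$; hence $h\can$ is onto.

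It then remains to convert this surjection into a sublocale in the standard way. Let $j\colon S\can\to L\can$ be the right adjoint of $h\can$. Surjectivity of $h\can$ gives $h\can\circ j=\id$, and $\nu=j\circ h\can$ is a nucleus on the frame $L\can$: it is a closure operator by the adjunction together with idempotency of $h\can\circ j$, and it preserves binary meets since $h\can$ preserves finite meets and $j$, a right adjoint, preserves all meets. Therefore the fixpoints of $\nu$, i.e.\ the image of $j$, form a sublocale of $L\can$, and $j$ corestricts to a frame isomorphism from $S\can$ onto it. (One may observe along the way that a perfect quotient of a locally compact frame is again locally compact, so that $S\can$ is in fact completely distributive; but this will not be needed.)

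I expect the genuine obstacle to be the surjectivity of $h\can$, and specifically the identity $h\SO(h\inv[\soII])=\soII$: this is what allows each Scott-open-filter generator of $S\can$ to be pulled back along $h\can$, once it is transported to the canonical extensions through Lemma~\ref{l:perfect-extension}(1) and combined with the join-generation of $S\can$ furnished by Proposition~\ref{p:basic-properties}(3). The remaining ingredients --- that $h\can$ is a complete lattice homomorphism and that surjective complete lattice homomorphisms out of a frame correspond to sublocales --- are either already available or purely formal.
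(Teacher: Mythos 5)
Your proof is correct and follows essentially the same route as the paper's: reduce surjectivity of $h\can$ to hitting the join-generators $e\SO_S(\soII)$ of $S\can$, and use $h[h\iSO(\soII)]=\soII$ (from $h$ being onto) to exhibit a preimage. The paper computes $h\can(h\iSO(\soII))$ directly from the $\pi$-extension formula rather than routing through Lemma~\ref{l:perfect-extension}(1), and leaves the standard passage from an onto frame homomorphism to a sublocale implicit, but these are presentational differences only.
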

\begin{proof}
    We show that the extension $h\can$ of the perfect onto frame homomorphism $h\colon L\onto S$ is also onto. Since $S\SO$ join-generates $S\can$ and since $h\can$ preserves joins, it is enough to show that $h\can$ maps onto $S\SO$. Let $\soII \in S\SO$, then $h\can(h\iSO(\soII)) = \bigmee \{ h(a) ~|~ h\iSO(\soII) \leq a\} = \bigmee h[h\iSO(\soII)]$ which is equal to $\soII$ because $h$ is onto and so $h[h\iSO(\soII)] = \soII$.
\end{proof}

Note that perfect sublocales have been previously studied, e.g., in~\cite{escardo1999compact,escardo2001regular} it is shown that the pointfree analogue of the patch topology for a stably compact space is precisely the frame of all perfect nuclei.

\section{Monotone maps and spectra}

One of the reasons for studying canonical extensions of frames is that it allows us to represent monotone functions with respect to the specialisation order directly in the category of frames.

Let $(X, \tau)$ and $(Y, \rho)$ be locally compact sober spaces. It is known that monotone maps $(X, \leq_\tau) \to (Y, \leq_\rho)$ are in a bijective correspondence with continuous maps $(X, \Up(X)) \to (Y, \rho)$. Moreover, since $\tau\can \cong \Up(X)$ (by Example~\ref{e:can-ext-space} and Theorem~\ref{t:unicity-existence}), we also have that
\[ \Top((X, \Up(X)),\, (Y, \rho)) \cong \Frm(\rho,\, \Up(X)) \cong \Frm(\rho, \tau\can), \]
where $\Top(X, Y)$ is the set of continuous maps $X \to Y$ and $\Frm(L, M)$ is the set of frame homomorphisms $L \to M$. In other words, we obtain:

\begin{proposition}
    For locally compact frames $L$ and $M$, monotone maps $\pt(L) \to \pt(M)$, with respect to the specialisation orders, are in a bijective correspondence with the frame homomorphisms $M\to L\can$.
\end{proposition}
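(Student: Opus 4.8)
The plan is to assemble the desired bijection from three pieces, two of which are essentially recorded already in the preceding discussion. First I would recall the classical topological fact: for locally compact sober spaces $(X,\tau)$ and $(Y,\rho)$, a map $p\colon X\to Y$ is monotone with respect to the specialisation orders if and only if it is continuous as a map $(X,\Up(X,\leq_\tau)) \to (Y,\rho)$. The right-to-left direction is immediate since continuity into $\rho$ and saturatedness of preimages of opens forces $x\leq_\tau x' \Rightarrow p(x)\leq_\rho p(x')$; the left-to-right direction uses that $\rho\sue\Up(Y,\leq_\rho)$ together with the fact that a monotone map pulls back upsets to upsets, so $p\inv[V]\in\Up(X,\leq_\tau)$ for every $V\in\rho$. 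This gives
\[ \{ \text{monotone } \pt(L)\to\pt(M) \} \;\cong\; \Top\bigl((\pt(L),\Up(\pt(L))),\,(\pt(M),\rho_M)\bigr), \]
writing $X=\pt(L)$, $Y=\pt(M)$, $\tau=\Oo(X)\cong L$, $\rho=\Oo(Y)\cong M$.

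Next I would invoke the dual adjunction between $\Oo$ and $\pt$. Since $M\cong\Oo(Y)$ is spatial and $(\pt(L),\Up(\pt(L)))$ is a space, we have $\Top\bigl((\pt(L),\Up(\pt(L))),\,Y\bigr) \cong \Frm\bigl(\Oo(Y),\,\Up(\pt(L))\bigr) \cong \Frm\bigl(M,\,\Up(\pt(L))\bigr)$; one should check this is the honest natural bijection, which holds because $Y$ is sober so that $\Top(Z,Y)\cong\Frm(\Oo(Y),\Oo(Z))$ for every space $Z$ — here $Z=(\pt(L),\Up(\pt(L)))$. Finally, by Example~\ref{e:can-ext-space} the embedding $L\into\Up(\pt(L))$ is a canonical extension of $L\cong\Oo(\pt(L))$ (using local compactness of $L$, hence sobriety of $\pt(L)$ via Hofmann--Lawson duality, so that $\Oo(\pt(L))\cong L$), and by Theorem~\ref{t:unicity-existence} canonical extensions are unique, so $\Up(\pt(L))\cong L\can$ as frames (indeed as extensions of $L$). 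Substituting this isomorphism yields $\Frm\bigl(M,\,\Up(\pt(L))\bigr)\cong\Frm(M,\,L\can)$, and composing the three bijections proves the claim.

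The only genuinely delicate point is the very first equivalence — that monotone maps of specialisation posets coincide with continuous maps into the saturated-sets topology — and in particular making sure it is stated for the correct class of spaces. For the right-to-left implication I need that the topology $\Up(X,\leq_\tau)$ on the domain really does refine $\tau$ (which it does, since every open is saturated) so that a $\Up$-continuous map is automatically $\tau$-continuous, and hence monotone. For left-to-right I need no local-compactness at all, only that monotone maps preserve upward closure. Local compactness and sobriety enter solely through two bookkeeping facts: that $\pt(L)$ is a locally compact sober space with $\Oo(\pt(L))\cong L$ (Hofmann--Lawson duality), and that Example~\ref{e:can-ext-space} applies so that $L\can\cong\Up(\pt(L))$. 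I would therefore spend most of the write-up carefully threading these identifications and checking naturality of the dual adjunction bijection on the non-standard space $(\pt(L),\Up(\pt(L)))$; everything else is formal.
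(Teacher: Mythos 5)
Your proposal is correct and follows essentially the same route as the paper: the equivalence between monotone maps and continuous maps into the $\Up$-topology, the dual adjunction $\Top((X,\Up(X)),Y)\cong\Frm(\Oo(Y),\Up(X))$ for sober $Y$, and the identification $\Up(\pt(L))\cong L\can$ via Example~\ref{e:can-ext-space} and Theorem~\ref{t:unicity-existence}. The extra care you take with the first equivalence (and the observation that it needs no local compactness) is a sound refinement of what the paper merely cites as known.
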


Next, for a locally compact frame $L$ and its topological dual $(X, \tau)$, we know that the embeddings \embI{} and $L \into \Up(\pt(L)) \cong \Up(X)$ are isomorphic but this does not mean that the spaces $(X, \Up(X))$ and $\pt(L\can)$ are homeomorphic. The embedding $L \into \Up(X)$ arises from the continuous map $(X, \Up(X)) \to (X,\tau)$ but, as we will see, the onto map $\pt(e)\colon \pt(L\can)\onto \pt(L)$ can in general be a different map.

In the following we examine the relationship between the spaces $(X, \Up(X))$ and $\pt(L\can)$.
First, let us look at the interplay between the spectra of the original frame and its canonical extension.

\begin{observation}\label{o:lift-points}
    Let $L$ be any frame and let $\two$ be the two element frame.
    \begin{enumerate}
        \item \two{} is isomorphic to its canonical extension $\two\can$.
        \item All frame homomorphisms $L\to \two$ are perfect.
    \end{enumerate}
\end{observation}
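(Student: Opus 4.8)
The plan is to handle the two parts separately, since they are essentially unrelated. For part (1), I would appeal directly to the concrete description of the canonical extension. The frame $\two$ has exactly two elements $0 < 1$, so its only filters are $\{1\}$ and $\{0,1\} = \two$; both are Scott-open (directed joins in a finite lattice are attained, so the Scott-openness condition is vacuous). Hence $\two\SO$, ordered by reverse inclusion, is a two-element poset, and the only filters obtained as intersections of Scott-open filters are again $\{1\}$ and $\two$. By Corollary~\ref{c:filt-repre}, $\two\can$ is the two-element chain, i.e.\ isomorphic to $\two$. Alternatively, and even more cheaply: $\two$ is locally compact (indeed trivially continuous) so by Proposition~\ref{p:basic-properties}(5) the map $e\colon \two\into\two\can$ is an injective frame homomorphism, and since $e$ preserves $0$ and $1$ and is dense, every element of $\two\can$ lies between $e(0)=0$ and $e(1)=1$, forcing $\two\can=\{0,1\}$. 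I would include whichever of these two arguments reads more smoothly in context.

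For part (2), let $p\colon L\to\two$ be a frame homomorphism; I must show $p$ is perfect, i.e.\ that for every $\soI\in L\SO$ the smallest filter of $\two$ containing $p[\soI]$ is Scott-open. But every filter of $\two$ is Scott-open, as noted above, so there is nothing to check: the map $p\SO\colon L\SO\to\two\SO$ is automatically well defined. (Concretely, $p\SO(\soI)$ is $\{1\}$ if $1\in p[\soI]$, which always holds since $p(1)=1$ and $1\in\soI$, so in fact $p\SO(\soI)=\{1\}=\upset 1$ for every $\soI$; this is harmless.) So part (2) is immediate from the definition of perfectness together with the triviality of the Scott topology on a finite frame.

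I do not anticipate a genuine obstacle here; both statements are essentially observations that unwind the relevant definitions. The only point requiring a moment's care is noting that \emph{every} filter of a finite frame — in particular of $\two$ — is Scott-open, since directed subsets of a finite poset contain their suprema; this single fact drives both parts. If one prefers to avoid Corollary~\ref{c:filt-repre} in part (1), the Proposition~\ref{p:basic-properties}(5) route is equally short.
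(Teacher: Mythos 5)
Your proof is correct and follows essentially the same route as the paper: both parts reduce to the single observation that every filter of the finite frame $\two$ is Scott-open (for part (1) the paper verifies directly that $\id_\two$ is dense and compact rather than invoking Corollary~\ref{c:filt-repre}, but that is the same computation, since $\two\SO\cong\two$). Two small slips in your asides, neither of which affects the argument: the parenthetical claim that $p\SO(\soI)=\{1\}$ for every $\soI$ is false --- if $p(a)=0$ for some $a\in\soI$ then $p\SO(\soI)=\two$ --- though, as you note, perfectness holds regardless; and in your alternative argument for (1), ``every element lies between $e(0)$ and $e(1)$'' does not by itself force $\two\can=\{0,1\}$ --- what you need is that density (Proposition~\ref{p:basic-properties}(3)) makes every $u\in\two\can$ a \emph{meet of elements of the image} of $e$, and that image is $\{0,1\}$.
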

\begin{proof}
    (1) Since $\two\SO \cong \two$, it is easy to check that the identity mapping $\two \to \two$ is both dense and compact. (2) The image of a Scott-open filter under a homomorphism $L\to \two$ is automatically a Scott-open filter on \two{}.
\end{proof}

Let $L$ be a locally compact frame. The frame embedding \embI{} gives rise to a continuous map
\begin{align*}
    \pt(e)\colon & \pt(L\can) \to \pt(L),\quad r\colon L\can\to \two \ee\longmapsto r\circ e\colon L\to \two.
\intertext{An immediate consequence of Observation~\ref{o:lift-points} is that there is also a map in the opposite direction:}
    & \pt(L) \to \pt(L\can),\quad p\colon L\to \two \ee\longmapsto p\can\colon L\can \to \two\can \cong \two.
\end{align*}
It is also immediate to check that $p\can(u)$, for a $u\in L\can$, is computed as $\bigmee p[F_u]$ where $F_u = \{ a\in L ~|~ u \leq a \}$ (we follow the convention from Section~\ref{s:extensions}).

As a result, we obtain that $\pt(e)$ is onto. Indeed, let $p\colon L\to \two$ be a frame homomorphism. For an $a\in L$,
\[ (\pt(e)(p\can))(a) = p\can(e(a)) = \bigmee p[F_a] = \bigmee p[\upset a] = p(a). \]
This is no surprise as $e$ is injective. However, because $p\can$ preserves all meets (Corollary~\ref{c:functor}), we see that $\pt(e)$ stays onto even if we restrict it to the subspace $\ptC(L\can)$ of frame homomorphism $L\can \to \two$ which preserve all meets. This makes sense since $L\can$ lives in the category of completely distributive lattices and so its spectrum should be computed there.

To show that $\pt(e)$ is injective, when restricted to $\ptC(L\can)$, let $r\colon L\can \to \two$ be a complete lattice homomorphism and let $u\in L\can$. We have that
\[ (r\circ e)\can (u) = \bigmee_{a\in F_u} (re)\can(a) = \bigmee_{a\in F_u} r(a) = r(u) \]
where the first equality holds because $(r\circ e)\can$ preserves meets and $u$ is equal to the meet $\bigmee F_u$ in $L\can$, the second equality follows from Lemma~\ref{l:exten-squares} and the last equality holds because $r$ preserves meets.

We have showed that there is a bijection between the points of $\pt(L)$ and $\ptC(L\can)$. This bijection on points extends to a homeomorphism, once we equip $\pt(L)$ with the topology of upsets.

\begin{proposition}
    Let $L$ be a locally compact frame and $X$ be its dual space. Then, $\ptC(L\can)$ (seen as a subspace of $\pt(L\can)$) is homeomorphic to $(X, \Up(X))$.
\end{proposition}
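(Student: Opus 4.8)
The plan is to reuse the set-level bijection between $\pt(L)$ and $\ptC(L\can)$ established just above --- namely $p\mapsto p\can$ with inverse $r\mapsto r\circ e$ --- and to check that under it the subspace topology of $\ptC(L\can)$ corresponds exactly to the topology $\Up(X)$ of saturated subsets. Write $X=\pt(L)$; since $L$ is locally compact, the Hofmann--Lawson duality tells us that $X$ is locally compact sober and $L\cong\Oo(X)$, the isomorphism sending $a\in L$ to the open set $\widehat a:=\{\,p\in\pt(L)\mid p(a)=1\,\}$. By Example~\ref{e:can-ext-space} and Theorem~\ref{t:unicity-existence}, the inclusion $\Oo(X)\into\Up(X)$ is \emph{the} canonical extension of $\Oo(X)\cong L$, so there is a complete lattice isomorphism $\varphi\colon L\can\to\Up(X)$ with $\varphi(e(a))=\widehat a$. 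Since by Proposition~\ref{p:basic-properties} every $u\in L\can$ equals $\bigmee\{\,e(a)\mid u\leq e(a)\,\}$ and meets in $\Up(X)$ are intersections, we get $\varphi(u)=M_u:=\bigcap\{\,\widehat a\mid a\in L,\ u\leq e(a)\,\}$, which is a saturated subset of $X$.

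The only computation to carry out is the equivalence, for $p\in\pt(L)$ and $u\in L\can$,
\[ p\can(u)=1 \qtq{iff} p\in M_u. \]
Recall that $p\can(u)=\bigmee p[F_u]$ with $F_u=\{\,a\in L\mid u\leq e(a)\,\}$. In $\two$ this meet is $1$ exactly when $p(a)=1$ for every $a\in F_u$, i.e.\ when $p\in\widehat a$ for every $a$ with $M_u\sue\widehat a$, i.e.\ when $p\in\bigcap\{\,\widehat a\mid M_u\sue\widehat a\,\}$; and this last intersection equals $M_u$ precisely because $M_u$ is saturated, by the characterisation of saturated subsets recalled in the Preliminaries.

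It then remains to assemble the pieces. Because $L\can$ is a frame (Corollary~\ref{c:completely-distributive}), the opens of $\pt(L\can)$ are exactly the sets $\widetilde u:=\{\,r\colon L\can\to\two\mid r(u)=1\,\}$ for $u\in L\can$, so the subspace topology on $\ptC(L\can)$ consists of the sets $\widetilde u\cap\ptC(L\can)$. Transporting along the bijection $r=p\can\leftrightarrow p$ and applying the displayed equivalence, $\widetilde u\cap\ptC(L\can)$ corresponds to $\{\,p\in X\mid p\in M_u\,\}=M_u$. As $u$ ranges over $L\can$, the value $\varphi(u)=M_u$ ranges over all of $\Up(X)$; hence the bijection carries the topology of $\ptC(L\can)$ bijectively onto $\Up(X)$ and is therefore a homeomorphism $\ptC(L\can)\cong(X,\Up(X))$.

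The point requiring care is the coherent bookkeeping of the two identifications $L\cong\Oo(X)$ and $L\can\cong\Up(X)$ alongside the formula $p\can(u)=\bigmee p[F_u]$; once these are aligned, the decisive step --- that intersecting all opens containing a saturated set returns that very set --- is just the defining property of saturated subsets, and nothing else is more than routine.
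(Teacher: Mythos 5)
Your argument is correct, and it takes a genuinely different route through the key step. The paper also starts from the set-level bijection $p\leftrightarrow p\can$ and the isomorphism $L\can\cong\Up(X)$, but it then proves that the points of $\ptC(L\can)$ \emph{separate the opens} of $L\can$: given $u\not\leq v$, density produces $\soI\in L\SO$ and $a\in L$ with $\soI\leq u$, $v\leq a$ and $a\notin\soI$, and a pointfree prime-filter-extension theorem (Proposition~VII.6.2 in~\cite{picadopultr2012book}) yields a point $p$ with $p[\soI]=\{1\}$ and $p(a)=0$, whence $p\can(u)=1$ and $p\can(v)=0$. You instead bypass any further separation argument by tracking the isomorphism $\varphi\colon L\can\to\Up(X)$ explicitly, computing $\varphi(u)=M_u=\bigcap\{\widehat a\mid u\leq e(a)\}$ from meet-density (Proposition~\ref{p:basic-properties}) and then verifying pointwise that $p\can(u)=1$ iff $p\in M_u$, the decisive fact being that a saturated set is the intersection of the opens containing it. Both proofs lean on $L\can\cong\Up(X)$ (Example~\ref{e:can-ext-space} and Theorem~\ref{t:unicity-existence}); what yours buys is that no second appeal to a choice-dependent extension theorem is needed at this stage, the separation of opens falling out of the identity $\widetilde u\cap\ptC(L\can)=\beta[M_u]$ together with injectivity of $\varphi$. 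What the paper's version buys is a direct verification that $L\can$ is spatial over the sub-class of completely meet-preserving points, without having to keep the two identifications $L\cong\Oo(X)$ and $L\can\cong\Up(X)$ aligned. The one place to be careful in your write-up is the silent use of ``$a\in F_u$ iff $M_u\sue\widehat a$''; this is fine, but only because $\varphi$ is an order isomorphism sending $e(a)$ to $\widehat a$, so it is worth stating.
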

\begin{proof}
    We know that $L\can \cong \Up(X)$. What is left to show is that the points of $\ptC(L\can)$ separate opens. Let $u\not\leq v$ in $L\can$. By density of canonical extensions, there are $\soI\in L\SO$ and $a\in L$ such that $\soI \leq u$, $v\leq a$ and $a\notin \soI$. By a variant of prime filter extension theorem (see, e.g.\ Proposition~VII.6.2 in~\cite{picadopultr2012book}), there exists a frame homomorphism $p\colon L\to \two$ such that $p[\soI] = \{1\}$ and $p(a) = 0$. Consequently, $p\can(u) \geq p\can(\soI) = \bigmee p[\soI] = 1$ and $p\can(u) \leq p\can(a) = 0$.
\end{proof}

\section{Comparison with earlier approaches}

\subsection{Canonical extensions of distributive lattices revisited}\label{s:dlat-revisited}

In this section we show how our construction generalises some of the known constructions of canonical extensions. First, let $A$ be a distributive lattice. Since $\Idl(A)$ is coherent and therefore also locally compact, we have the embedding
\[ i\colon A \xhookrightarrow{\ee{a \ \mapsto\  \downset a}} \Idl(A) \xhookrightarrow{\qq e} \Idl(A)\can. \]
Moreover,  $\Filt(A)$ is known to be isomorphic to $\Idl(A)\SO$ via the map that sends $F$ to the Scott-open filter $\soI_F = \{ I ~|~ I\cap F \not= \emptyset \}$ (e.g., as a special case of Lemma 15 in~\cite{jungsunderhauf96duality}). Therefore, for a filter $F$,
\[ \bigmee i[F] = \bigmee \{ e(\downset a) ~|~ a\in F \} = \bigmee \{ e(I) ~|~ I \cap F \not= \emptyset \} = \bigmee e[\soI_F] \]
where the middle equality follows from the fact that, whenever $I\cap F \not= \emptyset$, then $e(\downset a) \leq e(I)$ for any $a\in I\cap F$. As a consequence, from density and compactness of $e$ we obtain density and compactness of the distributive lattice embedding $i\colon A\into \Idl(A)\can$ (as defined in Section~\ref{s:abstract-dlat}). In other words, the embedding $i\colon A\into \Idl(A)\can$ is a canonical extensions of the distributive lattice $A$ and those are uniquely determined.

\begin{proposition}\label{p:dlat-frmcan}
    Let $A$ be a distributive lattice. Its canonical extension $e\colon A\into A\can$ is isomorphic to $\Idl(A)\can$.
\end{proposition}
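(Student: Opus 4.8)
The plan is to notice that essentially all of the work has already been carried out in the paragraph preceding the statement, and to organise the argument as a single verification: that the lattice embedding $i\colon A\into\Idl(A)\can$ satisfies the (Dense) and (Compact) axioms for canonical extensions \emph{of distributive lattices} from Section~\ref{s:abstract-dlat}. Since those axioms determine $A\can$ up to a unique isomorphism over $A$ (the polarity construction, via Fact~\ref{f:polarities}.3), this immediately yields $A\can\cong\Idl(A)\can$.

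First I would set up the two translation dictionaries between the frame-level data of $e\colon\Idl(A)\to\Idl(A)\can$ and the lattice-level data of $i$. For an ideal $J$ of $A$ one has $J=\dirvee_{a\in J}\downset a$ in $\Idl(A)$, a directed join, so $\bigvee i[J]=e(J)$ because $e$ preserves directed joins (Proposition~\ref{p:basic-properties}); thus ideals of $A$ index the ``open'' generators on the right exactly as elements of $\Idl(A)$ do. For a filter $F$ of $A$, the identity $\bigmee i[F]=\bigmee e[\soI_F]$ with $\soI_F=\{I\mid I\cap F\neq\emptyset\}$ is precisely the computation done just above the statement, and $F\mapsto\soI_F$ is a bijection $\Filt(A)\cong\Idl(A)\SO$; in particular every Scott-open filter of $\Idl(A)$ is of the form $\soI_F$.

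With these in hand the two axioms fall out directly. For compactness: if $\bigmee i[F]\leq\bigvee i[J]$ then $\bigmee e[\soI_F]\leq e(J)$, so (Compact) for $e$ gives $J\in\soI_F$, that is $F\cap J\neq\emptyset$. For density: assume $u\not\leq v$ in $\Idl(A)\can$; by (Dense) for $e$ there are $\soI\in\Idl(A)\SO$ and an ideal $I\sue A$ with $\bigmee e[\soI]\leq u$, $v\leq e(I)$ and $\bigmee e[\soI]\not\leq e(I)$. Writing $\soI=\soI_F$ and taking $J=I$, this reads $\bigmee i[F]\leq u$, $v\leq\bigvee i[J]$ and $\bigmee i[F]\not\leq\bigvee i[J]$, which is exactly a filter--ideal pair witnessing the failure of the distributive-lattice density implication. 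Hence $i$ is dense and compact, so $i\colon A\into\Idl(A)\can$ is a canonical extension of $A$ and, by uniqueness, $A\can\cong\Idl(A)\can$.

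I do not expect a real obstacle. The only points needing care are the two identifications in the second paragraph: that $\bigvee i[J]=e(J)$, which relies on $e$ preserving joins and hence on $\Idl(A)$ being locally compact (true, as $\Idl(A)$ is coherent); and that $F\mapsto\soI_F$ is \emph{onto} the Scott-open filters of $\Idl(A)$, which is the classical description of $\Idl(A)\SO$ recalled before the statement. Everything else is a routine transcription of the frame axioms through these dictionaries.
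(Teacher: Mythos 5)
Your proposal is correct and follows essentially the same route as the paper: the paper's argument is exactly the computation $\bigmee i[F]=\bigmee e[\soI_F]$ via the bijection $\Filt(A)\cong\Idl(A)\SO$, the transfer of density and compactness from $e$ to $i$, and the uniqueness of distributive-lattice canonical extensions. You merely spell out the transfer (including the ideal-side identity $\bigvee i[J]=e(J)$) in more detail than the paper's terse ``as a consequence \dots{} we obtain density and compactness of $i$''.
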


Note also that homomorphisms between distributive lattices lift to perfect frame homomorphisms between their frames of ideals and so Theorem~\ref{t:lift-perfect-homo} also applies for them.

\subsection{Canonical extensions of join-strong proximity lattices}

Sam J.\ van Gool defined in~\cite{gool2012duality} canonical extensions for \emph{join-strong proximity lattices}, that is, for lattices $A$ equipped with a relation $R\sue A\ttimes A$ such that, for every $a, a', b, b'\in A$,\footnote{Sam J.\ van Gool shows that the categories of distributive join-strong proximity lattices and stably compact spaces are dually equivalent, which strengthens the result of~\cite{jungsunderhauf96duality} where also the dual of the axiom (5) is required. The condition $R \sue (\leq)$ can be dropped if we slightly modify the definition of round ideals and filters (see~\cite{jungsunderhauf96duality}).}
\begin{enumerate}
    \item $R\circ R = R$ and $R \sue (\leq)$,
    \item $0 R a$ and $a R 1$,
    \item $a R b$ and $a' R b$ iff $(a\vee a') R b$,
    \item $a R b$ and $a R b'$ iff $a R (b\mee b')$,
    \item $a R (b\vee c)$ implies that $\exists b', c'$ such that $c' R c$, $b' R b$ and $a R (b'\vee c')$.
\end{enumerate}
In case when the lattice part of a join-strong proximity lattice $(A, R)$ is distributive, we can use a similar argument as above, to show that the lattice embedding\footnotemark{}
\[ i\colon A \xhookrightarrow{\ee{a \ \mapsto\  I_a}} \RIdl(A) \xhookrightarrow{\qq e} \RIdl(A)\can\]
satisfies Sam van Gool's conditions of canonical extensions for join-strong proximity lattices. Here $\RIdl(A)$ denotes the frame of all \emph{round} ideals, that is, ideals $I$ such that $a\in I$ implies $a'\in I$ for some $aRa'$. The ideal $I_a$ is taken to be $\{ b ~|~ b R a \}$. The argument also relies on the fact that the preframe of Scott-open filters of $\RIdl(A)$ is isomorphic to the frame of round filters (Lemma 15 in~\cite{jungsunderhauf96duality}).
\footnotetext{We know that $i\colon A\into \RIdl(A)\can$ is a lattice homomorphism because $a\mapsto I_a$ is a lattice homomorphism by Proposition 17 in~\cite{jungsunderhauf96duality}, and also that $e$ is an injective frame homomorphism because $\RIdl(A)$ is locally compact by Theorem 11 in~\cite{jungsunderhauf96duality}.}

\begin{proposition}
    Let $(A, R)$ be a distributive join-strong proximity lattice. Its canonical extension $e\colon A\into (A,R)\can$ is isomorphic to $\RIdl(A)\can$.
\end{proposition}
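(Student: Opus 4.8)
The plan is to imitate the proof of Proposition~\ref{p:dlat-frmcan} almost verbatim, systematically replacing ideals and filters by \emph{round} ideals and \emph{round} filters and carrying the extra roundness bookkeeping; most of the computation has in fact already been set up in the paragraph preceding the statement. First I would record the inputs. By Theorem~11 of~\cite{jungsunderhauf96duality} the frame $\RIdl(A)$ is locally compact, so Proposition~\ref{p:basic-properties}(5) together with Theorem~\ref{t:unicity-existence} shows that $e\colon \RIdl(A)\into \RIdl(A)\can$ is, up to a unique isomorphism, the canonical extension of the frame $\RIdl(A)$; in particular it is dense and compact in the sense of Definition~\ref{d:can-ext}. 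The composite $i\colon A\into \RIdl(A)\can$ is a lattice embedding (as recalled in the footnote above), and $\RIdl(A)\can$ is a complete lattice, so it is a candidate for the canonical extension of $(A,R)$ in van~Gool's sense. It then remains to verify van~Gool's density and compactness and to invoke the uniqueness proved in~\cite{gool2012duality}.

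Next I would set up the dictionary that makes van~Gool's axioms for $(A,R)$ coincide, term by term, with the frame axioms for $\RIdl(A)$. On the ideal side, the elements of $\RIdl(A)$ are exactly the round ideals of $A$, and for a round ideal $I$ roundness gives $I=\bigcup\{I_b\mid b\in I\}$ as a directed union, whence $e(I)=\bigvee e[\{I_b\mid b\in I\}]=\bigvee i[I]$ since $e$ preserves joins. On the filter side, Lemma~15 of~\cite{jungsunderhauf96duality} identifies $\RIdl(A)\SO$ with $\RFilt(A)$ via $F\mapsto \soI_F=\{I\in\RIdl(A)\mid I\cap F\neq\emptyset\}$, and the computation already performed before the statement yields
\[ \bigmee i[F]=\bigmee\{e(I_a)\mid a\in F\}=\bigmee\{e(I)\mid I\cap F\neq\emptyset\}=e\SO(\soI_F). \]
The only place where roundness of $F$ enters is the inclusion $\{I_a\mid a\in F\}\sue \soI_F$: if $a\in F$ then roundness supplies $a'\in F$ with $a'Ra$, i.e.\ $a'\in I_a\cap F$; the reverse inequality of meets uses only $R\sue(\leq)$ and downward closure of ideals.

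With this dictionary, van~Gool's compactness ``$\bigmee i[F]\leq\bigvee i[I]$ implies $F\cap I\neq\emptyset$'' is precisely compactness of $e$ applied to $\soI_F$ and $I$, since the conclusion ``$I\in\soI_F$'' unwinds to ``$I\cap F\neq\emptyset$''; and van~Gool's density clause transcribes line by line into the density clause of Definition~\ref{d:can-ext} for $\RIdl(A)$, once $\bigmee i[F]$ is read as $e\SO(\soI_F)$ and $\bigvee i[I]$ as $e(I)$, and one notes that under the dictionary the round filters of $A$ (resp.\ the round ideals of $A$) are in bijection with all of $\RIdl(A)\SO$ (resp.\ all of $\RIdl(A)$), so no instance of either axiom is gained or lost in translation. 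Hence $i\colon A\into \RIdl(A)\can$ is a canonical extension of $(A,R)$ in van~Gool's sense, and since these are unique up to a unique isomorphism fixing $A$, we conclude $(A,R)\can\cong \RIdl(A)\can$.

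The step I expect to be the main obstacle is exactly this last matching of axioms: one must check that van~Gool's density and compactness quantify over objects that correspond, under the two identifications above, to \emph{all} of $\RIdl(A)\SO$ and \emph{all} of $\RIdl(A)$, so that the two axiom schemes are genuinely equivalent and not merely one-directionally implied; Lemma~15 of~\cite{jungsunderhauf96duality} and the elementary description of round ideals are what guarantee this. The remaining points are routine and parallel Section~\ref{s:dlat-revisited}: directedness of $\{I_b\mid b\in I\}$, which follows from $b\mapsto I_b$ preserving finite joins together with $I$ being an ideal, and the roundness half of the index-set identity displayed above.
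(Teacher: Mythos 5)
Your proposal is correct and follows essentially the same route as the paper, which proves this proposition only by the sketch in the preceding paragraph: transfer density and compactness of $e\colon \RIdl(A)\into \RIdl(A)\can$ to the composite $i\colon A\into \RIdl(A)\can$ via the identification of $\RIdl(A)\SO$ with round filters (Lemma~15 of Jung--S\"underhauf) and the identity $\bigmee i[F]=e\SO(\soI_F)$, then invoke uniqueness of van Gool's canonical extensions. Your version simply spells out the roundness bookkeeping (where roundness of $F$ and the condition $R\sue(\leq)$ are each used) that the paper leaves implicit.
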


Sam van Gool also shows that his canonical extension lifts the natural morphisms of join-strong proximity lattices (the so-called \emph{j-morphisms}) to maps between the corresponding canonical extensions. Such lifted maps preserve all meets and, when assuming the axiom of choice, it can be shown that they also preserve all joins~\cite{gool2012duality}. Since j-morphisms between proximity lattices correspond exactly to frame homomorphisms between the corresponding stably compact frames, our Theorem~\ref{t:lift-perfect-homo} improves van Gool's result in sense that we do not need to use any choice principle for perfect morphisms.

\subsection{The Boolean case}

It turns out that canonical extensions of Boolean algebras ${B \into B\can}$, which we showed how to obtain by frame-theoretic means in Section~\ref{s:dlat-revisited}, also have a different frame-theoretic description. First, recall that a frame is \emph{subfit} if it satisfies the following condition
\[ (\forall a, b)\qquad a\not\leq b \implies \exists c\ \ a\vee c = 1 \ete{and} b\vee c \not= 1 \]
Let us refer to a recent result by Richard Ball and Ale\v s Pultr~\cite{ballpultr2018maximal} for subfit frames.
\begin{fact}\label{f:Sc-ess-ext}
    Let $L$ be a subfit frame and $M$ a complete Boolean algebra. If $L$ embeds into $M$ in such a way that, whenever $x < y$ in $M$, then there are $a < b$ in $L$ such that
    \[ x \mee b \leq a \qtq{and} y \vee a \geq b, \]
    then $M$ is isomorphic to $\Sc(L)$, which is the frame of sublocales of $L$ which are joins of closed sublocales.
\end{fact}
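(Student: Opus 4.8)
This statement is quoted from Ball and Pultr~\cite{ballpultr2018maximal}; I outline the argument one would give. Work throughout inside a complete Boolean algebra, write $\neg$ for complementation, and for $a,b\in L$ with $a\le b$ call $b\mee\neg a$ a \emph{basic} element. The plan is to show that \emph{any} complete Boolean algebra $M$ admitting an embedding of $L$ with the displayed property is recovered, up to isomorphism over $L$, from a structure that depends only on $L$; since $\Sc(L)$ is one such $M$, this forces $M\cong\Sc(L)$.

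The first task is to verify that $\Sc(L)$ is itself an instance of $M$. Here one invokes the structure theory of the coframe $\Sl(L)$ for subfit frames (see~\cite{ballpultr2018maximal,picadopultr2012book}): subfitness of $L$ is exactly what makes $\Sc(L)$ a complete Boolean algebra that contains every open sublocale, so $a\mapsto U_a$ is a frame embedding $L\into\Sc(L)$ and the complement of $U_a$ in $\Sc(L)$ is the corresponding closed sublocale. If $S<T$ in $\Sc(L)$ then $T\mee\neg S$ is a nonzero element of $\Sc(L)$, hence a join of closed sublocales that is nonzero, so it lies above some nonzero closed sublocale $\neg U_a$ with $a\ne 1$; then $a<1$ together with $b=1$ witnesses the separation condition, because $S\mee U_1=S\le U_a$ (as $\neg U_a\le\neg S$) and $T\vee U_a\ge\neg U_a\vee U_a=1$.

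Now fix an arbitrary admissible $M\supseteq L$. A routine Boolean computation gives, for $a\le b$ and $a'\le b'$ in $L$,
\[ b'\mee\neg a'\ \le\ b\mee\neg a \quad\Longleftrightarrow\quad b'\le b\vee a'\ \text{ and }\ a\mee b'\le a', \]
and likewise $(b'\mee\neg a')\mee(b\mee\neg a)=0$ iff $b\mee b'\le a\vee a'$; hence the poset $P$ of basic elements, together with its orthogonality relation $\perp$, is determined by $L$ alone. The displayed hypothesis says precisely that whenever $x<y$ in $M$ there is a nonzero basic element below $y\mee\neg x$; a one-line argument then shows the nonzero basic elements are join-dense (if $x^{*}=\bigvee\{s\in P:0\ne s\le x\}$ were strictly below $x$, a nonzero basic $s\le x\mee\neg x^{*}$ would satisfy both $s\le x^{*}$ and $s\le\neg x^{*}$), so $x\le y$ iff $\{s\in P:0\ne s\le x\}\sue\{s\in P:0\ne s\le y\}$. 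Thus $x\mapsto\{s\in P:0\ne s\le x\}$ embeds $M$, as a poset, into the downsets of $P$, with image those downsets $I$ satisfying $I=\{t\in P:0\ne t\le\bigvee_M I\}$; and the point is that this last condition is intrinsic, since (using that $M$ is a frame) $t\not\le\bigvee_M I$ holds iff some nonzero $s\in P$ has $s\le t$ and $s\perp u$ for every $u\in I$. Consequently $M$ is isomorphic, as a complete Boolean algebra, to the lattice of these ``$\perp$-closed'' downsets of $(P,\le,\perp)$, and the copy of $L$ in it is the intrinsic assignment $a\mapsto\{s\in P:0\ne s\le a\}$ (because $b\mee\neg a'\le a$ iff $b\le a\vee a'$). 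Since $\Sc(L)$ is admissible, applying the same description to it gives $M\cong\Sc(L)$ compatibly with the two embeddings of $L$.

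The main obstacle is the first task: recognising $\Sc(L)$ as a complete Boolean algebra carrying the embedding and the separation property rests on the (non-trivial, external) theory of $\Sl(L)$ for subfit frames, in particular the chain of equivalences ``$L$ subfit $\iff$ every open sublocale is a join of closed sublocales $\iff$ $\Sc(L)$ is Boolean''. Granting that, the remainder is a Dedekind--MacNeille-style rigidity argument in which the separation hypothesis does exactly the work of pinning down the closure operator on the downsets of $P$; the only delicate point there is the bookkeeping that shows each relevant relation is genuinely expressible within $L$.
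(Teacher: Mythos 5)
You have correctly identified that the paper does not prove this statement at all: it is imported verbatim as a Fact from Ball and Pultr's paper on maximal essential extensions, so there is no in-paper argument to compare yours against. Taken on its own terms, your reconstruction is sound and follows a sensible route: both the Boolean identities ($b'\mee\neg a'\le b\mee\neg a$ iff $b'\le b\vee a'$ and $a\mee b'\le a'$; orthogonality iff $b\mee b'\le a\vee a'$), the join-density of nonzero basic elements from the separation hypothesis, and the intrinsic description of the image of $x\mapsto\{s\in P: 0\ne s\le x\}$ via the $\perp$-closure all check out, and the verification that $\Sc(L)$ itself satisfies the hypotheses (taking $b=1$ and a nonzero closed sublocale below $T\mee\neg S$) is correct. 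Two points deserve explicit flagging. First, the rigidity argument needs ``embeds'' to mean at least a bounded-lattice embedding that reflects order, so that the relations defining $(P,\le,\perp)$ computed in $M$ coincide with those computed in $L$; this holds in the paper's application (an injective frame homomorphism $\Idl(B)\into\Idl(B)\can$), but the Fact as stated is silent on it, and your proof genuinely uses it. Second, the entire first stage --- that for subfit $L$ the set $\Sc(L)$ is a complete Boolean algebra containing every open sublocale, with $\mathfrak{c}(a)$ complementing $U_a$ there --- is exactly the substance of the cited Ball--Pultr result, so your argument reduces the Fact to that structure theory rather than replacing it; this is consistent with how the paper uses the Fact, but it means your proof is a reconstruction of the uniqueness half plus an appeal to the existence half.
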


Moreover, Ball and Pultr~\cite{ballpultr2018maximal} also show that $\Sc(\Oo(X))\cong \Ps(X)$ for every T$_1$-space $X$. On the other hand, we know that $B\can$ is a complete Boolean algebra which is, assuming the Axiom of Choice, isomorphic to $\Ps(\spec(B))$, where $\spec(B)$ is the Stone dual to $B$. This means that the canonical extension of $B$ (seen as a lattice), the canonical extension of the frame $\Idl(B)$, and $\Sc(\Idl(B))$ are all isomorphic. In fact, we can prove the same statement without the detour to spaces and without the axiom of choice.

\begin{theorem}
    Let $B$ be a Boolean algebra, then $B\can$ (resp.\ $\Idl(B)\can$) is isomorphic to $\Sc(\Idl(B))$.
\end{theorem}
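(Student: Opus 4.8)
The plan is to reduce the theorem to Fact~\ref{f:Sc-ess-ext} applied to the frame $L = \Idl(B)$ and the complete Boolean algebra $M = B\can$, thereby establishing $B\can \cong \Sc(\Idl(B))$ directly and choice-freely; the isomorphism $B\can \cong \Idl(B)\can$ is already Proposition~\ref{p:dlat-frmcan}. So the real work is to verify the two hypotheses of Fact~\ref{f:Sc-ess-ext} for the embedding $i\colon \Idl(B) \into \Idl(B)\can$ (equivalently $e$ from Section~\ref{s:dlat-revisited}). First I would check that $\Idl(B)$ is subfit: since $B$ is Boolean, $\Idl(B)$ is a coherent frame whose compact elements form the Boolean algebra $B$, and subfitness follows because for $\downset a \not\leq \downset b$ in $\Idl(B)$ we may take $c = \downset \neg a$, so that $\downset a \vee \downset(\neg a) = \top$ while $\downset b \vee \downset(\neg a) \neq \top$ (as $a \not\leq b$ means $a \vee \neg a \neq b \vee \neg a$ among compacts). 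More generally one shows any ideal $J \not\leq K$ can be separated by an element of this form. Second, and this is the crux, I would verify the ``essential extension'' condition: whenever $u < v$ in $B\can$, there are compacts $a < b$ in $\Idl(B)$ (i.e.\ $a,b \in B$, viewed as principal ideals) with $u \mee b \leq a$ and $v \vee a \geq b$ in $B\can$.

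The key step is extracting such $a < b$ from the density of the canonical extension. Given $u \not\geq v$ in $B\can$, density provides a Scott-open filter $\soI \in \Idl(B)\SO$ and an element $c \in \Idl(B)$ with $\soI \leq u$ (in $B\can$), $v \leq c$, and $c \notin \soI$, i.e.\ $e\SO(\soI) \leq u$, $v \leq e(c)$, but $e\SO(\soI) \not\leq e(c)$. Because $\Idl(B)$ is coherent, its Scott-open filters are generated by their \emph{compact} members, so there is a compact $b \in \soI$ (an element of $B$) with $b \notin c$ — more precisely, since $c = \dirvee\{b' \in B \mid b' \leq c\}$ and $\soI$ is Scott-open, $c \in \soI$ would force some such $b' \in \soI$, but actually we want the failure direction: pick $b \in B \cap \soI$ witnessing $b \not\leq c$, which exists because otherwise $\soI \subseteq \downset c$ would be forced at the compact level, giving $e\SO(\soI) \leq e(c)$. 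Now set $a = b \wedge c \in B$, so $a < b$ (strictly, as $b \not\leq c$ forces $a = b\wedge c \neq b$; and one checks $a = b$ cannot hold). Then $e(b) \wedge u \leq e(b) \wedge e\SO(\soI)$... here I would instead argue $u \wedge e(b) \leq e(a)$ using that in a Boolean algebra $B\can$ one has $u \leq \neg e\SO(\soI') \vee e\SO(\soI')$ appropriately, or more cleanly: the element $e\SO(\soI)$ and the complementation in $B\can$ let us show $u \leq e(c) \vee \bigvee\{e\SO(\soI') \mid \soI' \text{ appropriate}\}$, squeezing out $u \wedge e(b) \leq e(b\wedge c) = e(a)$. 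Dually, $v \leq e(c) \leq e(c)$ and $e(b) = e(a) \vee e(b \wedge \neg a)$... one shows $v \vee e(a) \geq e(b)$ because $v \leq e(c)$ and $b \wedge c = a$ together with the fact that $b \in \soI$ and $\soI \leq u$ — since $v$ and $u$ are ordered by $u \not\geq v$ — combine to force $e(b) \leq v \vee e(a)$ at the level of compacts.

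Once the two hypotheses are verified, Fact~\ref{f:Sc-ess-ext} yields $B\can \cong \Idl(B)\can \cong \Sc(\Idl(B))$ and we are done. The main obstacle I anticipate is the bookkeeping in the second hypothesis: translating the density datum $(\soI, c)$ for $B\can$ into a pair $a < b$ of \emph{compact} elements and then verifying both inequalities $u \wedge b \leq a$ and $v \vee a \geq b$ genuinely hold in $B\can$. The cleanest route is probably to observe that, by coherence of $\Idl(B)$, every Scott-open filter $\soI$ satisfies $e\SO(\soI) = \bigvee\{e(b') \mid b' \in B,\ \upset b' \supseteq \soI\}$... no — rather $e\SO(\soI) = \bigmee\{e(b') : b' \in \soI \cap B\}$, and compactness of $e$ (Definition~\ref{d:can-ext}) gives $e\SO(\soI) \leq e(c)$ iff $c \in \soI$; so the strict failure $e\SO(\soI)\not\leq e(c)$ combined with Booleanness of $B\can$ (which lets us intersect with $e(b)$ and complement freely) is exactly what forces the compact witness and the two squeezing inequalities. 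I would also need the dual-compactness form from Lemma~\ref{l:compact-open} to handle directed joins coming from $c$. The subfitness check and the final invocation of Fact~\ref{f:Sc-ess-ext} are routine by comparison.
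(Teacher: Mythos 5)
Your overall strategy---reduce everything to Fact~\ref{f:Sc-ess-ext} applied to $L = \Idl(B)$ and $M \cong B\can \cong \Idl(B)\can$---is exactly the paper's, and your subfitness check is routine (the paper treats it as such). But the crux of the matter, producing $a<b$ in $\Idl(B)$ with $x\mee b\leq a$ and $y\vee a\geq b$ for a given $x<y$, is not actually carried out, and the witnesses you propose do not visibly work. Two concrete problems. First, your application of density has the roles of the two elements swapped: from $u<v$, i.e.\ $v\not\leq u$, density yields $\soI\leq v$ and $u\leq e(c)$ with $c\notin\soI$, not ``$\soI\leq u$ and $v\leq e(c)$'' as you write. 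Second, and more seriously, with the roles corrected and your choice of a compact $b\in\soI$ with $b\not\leq c$ and $a = b\mee c$, the first inequality $u\mee e(b)\leq e(a)$ does follow (from $u\leq e(c)$ and meet-preservation of $e$), but the second, $v\vee e(a)\geq e(b)$, reduces in the Boolean algebra $\Idl(B)\can$ to $e(b)\mee\neg e(c)\leq v$, and the only lower bound you have on $v$ is $e\SO(\soI)\leq v$; there is no reason why $e(b)\mee\neg e(c)$ should lie below $e\SO(\soI)$. Your text trails off into ellipses at precisely this point, so the gap is not just notational.

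The device you are missing is that the paper does not look for a pair of compact elements at all. It applies density \emph{twice}: once for the frame extension $e\colon\Idl(B)\into\Idl(B)\can$, yielding an ideal $I$ with $x\leq e(I)$ and $y\not\leq e(I)$, and once for the \emph{lattice} canonical extension $i\colon B\into\Idl(B)\into\Idl(B)\can$, yielding a filter $F\sue B$ with $\bigmee i[F]\leq y$ and $\bigmee i[F]\not\leq e(I)=\bigvee i[I]$. Compactness of $i$ then gives $F\cap I=\emptyset$, hence $I\vee\neg F\neq\downset 1$ where $\neg F=\{\neg f\mid f\in F\}$, and one takes $b=1$ (the top of $\Idl(B)$) and $a=e(I\vee\neg F)$. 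The inequality $x\mee b\leq a$ is then immediate from $x\leq e(I)$, and $y\vee a\geq b=1$ follows from $\bigmee i[F]\vee\bigvee i[\neg F]=1$, which is itself a short compactness argument using $\neg z\leq z\Rightarrow z=1$ in $B$. It is this use of the Boolean complementation of $B$ on the filter side, rather than any complementation inside $B\can$, that makes the second inequality come out; if you want to complete your proof, this is the step to import.
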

\begin{proof}
    We show that the embedding $e\colon \Idl(B) \into \Idl(B)\can$ satisfies the conditions of Fact~\ref{f:Sc-ess-ext}. Observe that $\Idl(B)\can$ is a complete Boolean algebra since $B\can$ is (Theorem~6.3 in~\cite{gehrkeharding2001bounded}) and since $B\can \cong \Idl(B)\can$ (Proposition~\ref{p:dlat-frmcan}).

    Next, let $x < y$ in $\Idl(B)\can$. By density of $e$, there is an ideal $I\in \Idl(B)$ such that
    \begin{align}
        x \leq e(I) \qtq{and} y\not\leq e(I).
        \label{e:bcan-1}
    \end{align}
    Next, by density of the lattice canonical extension $i\colon B\into \Idl(B) \into \Idl(B)\can$, there is a filter $F\in \Idl(B)$ such that
    \begin{align}
        \bigmee i[F] \leq y \qtq{and} \bigmee i[F]\not\leq e(I) = \bigvee i[I].
        \label{e:bcan-2}
    \end{align}
    Hence, compactness of $i$ gives that $F\cap I = \emptyset$ or, in other words, that $f\not\leq i$ for every $f\in F$ and $i\in I$. Since $f\not\leq i$ iff $i \vee \neg f \not= 1$, we see that the join of ideals $I \vee \neg F$, where $\neg F = \{ \neg f ~|~ f\in F\}$, is not equal to the ideal $\downset 1$. Set $a = e(I\vee \neg F)$ and $b = 1$. Finally we check that the pair $a < b$ satisfies the two required inequalities. By (\ref{e:bcan-1}) we have that
    \[ x \mee b = x \leq e(I) \leq e(I \vee \neg F) = a \]
    and by (\ref{e:bcan-2}) we have that
    \[ y \vee a \geq y \vee e(\neg F) \geq \bigmee i[F] \vee e(\neg F) = 1 = b. \]
    To see why the penultimate equality holds, let $\bigmee i[F] \vee e(\neg F) = \bigmee i[F] \vee \bigvee i[\neg F] \leq i(z)$ for some $z\in B$. We see that $z\in F$ by compactness of $i$. Moreover, because $i(\neg z) \leq \bigvee i[\neg F] \leq i(z)$ and so $\neg z \leq z$, it must be that $z$ is equal to 1 (in $B$).
\end{proof}

Note that the present theorem cannot be generalised to distributive lattices because canonical extensions of those are not complete Boolean algebras in general, as required by Fact~\ref{f:Sc-ess-ext}.

\section{Final remarks}

In this paper we have outlined the basics of the theory of canonical extensions for frames. By the example of how powerful the theory of canonical extensions has shown to be for posets and lattices, it is natural to ask whether this can happen for frames too. For example, already with the present work we can lift operations on frames to their canonical extensions. As in the classical theory of canonical extensions, one should expect that such lifted operations will preserve certain types of equations and also that the original operations correspond to certain relations on the spectrum of the frame (in the spirit of~\cite{gehrkejonsson2004bounded,gehrke2014canonical}).

However, the current results are only a starting point. It is not clear yet how to generalise the theory to arbitrary frames. Our construction works well for locally compact frames or, more generally, for frames with enough compact fitted sublocales (Proposition~\ref{p:injective-subloc}).

Another missing piece of the puzzle concerns $\sigma$-- and $\pi$-extensions of maps. All continuous maps between sober spaces lift to continuous maps between their canonical extensions (the preimage of an upset, by a continuous map, is always an upset). For this reason we would expect that homomorphisms between frames lift to complete lattice homomorphisms between the corresponding canonical extensions. However, we were not able to prove this without assuming perfectness. Maybe a more general construction of canonical extensions needs to be considered first.

Finally, it is well known that a complete Boolean algebra is completely distributive if and only if it is atomic, which is a fact that relies on a non-constructive choice principle. For this reason, one cannot expect to prove that the canonical extension of an arbitrary frame is completely distributive. This is because in the special case when we take the frame $\Idl(B)$, for a Boolean algebra $B$, the canonical extension $\Idl(B)\can$ is known to be a complete Boolean algebra (Theorem~6.3 in~\cite{gehrkeharding2001bounded}) and so proving complete distributivity for $\Idl(B)\can$ requires non-constructive principle. It remains open, however, whether canonical extensions of frames are always frames and coframes. Note that this is always the case for canonical extensions of distributive lattices (Theorem 3 in~\cite{gehrke2014canonical}).

\section*{Acknowledgement}
I would like to thank the participants of the Workshop on Algebra, Logic and Topology in Coimbra 2018 for many stimulating discussions. In particular, I am grateful to Mai Gehrke for suggesting this research topic to me and giving me a lot of invaluable feedback in the process.
I also appreciate comments and suggestions of the anonymous referee which greatly improved the presentation of the paper.

Last but not least, I would like to thank Ale\v s Pultr for introducing me to the ever-fascinating world of pointfree topology.

% \bibliographystyle{plain}
% \bibliography{references.bib}

\end{document}